\definecolor{col1}{rgb}{0.3.0.4,0.6}
\newcommand{\R}{\mathbb R}
\newcommand{\N}{\mathbb N}
\def\dint{\textup{d}}
\newcommand{\ind}{{\small 1}\!\!1}
\newtheorem{thm}{Theorem}[section]
\newtheorem{lemma}[thm]{Lemma}
\newtheorem{df}[thm]{Definition}
\newtheorem{thmalpha}{Theorem}
\theoremstyle{definition}
\newtheorem{rmk}[thm]{Remark}
\begin{document}

\title{\bf A Maxwell principle for generalized Orlicz balls}

\medskip

\author{Samuel G.~G. Johnston and Joscha Prochno}

%\thanks{~}

%\keywords{}
%\subjclass{}
%% NB There should be only one primary classification, and zero or
%more secondary classifications.

\date{}

\maketitle

\begin{abstract}
\small
In [A dozen de {F}inetti-style results in search of a theory, Ann. Inst. H. Poincar\'{e} Probab. Statist. 23(2)(1987), 397--423], Diaconis and Freedman studied the low-dimensional projections of random vectors from the Euclidean unit sphere and the simplex in high dimensions, noting that the individual coordinates of these random vectors look like Gaussian and exponential random variables respectively. In subsequent works, Rachev and R\"uschendorf and Naor and Romik unified these results by establishing a connection between $\ell_p^N$ balls and a $p$-generalized Gaussian distribution. In this paper, we study similar questions in a significantly generalized and unifying setting, looking at low-dimensional projections of random vectors uniformly distributed on sets of the form 
\[ B_{\phi,t}^N := \Big\{ (s_1,\ldots,s_N) \in \mathbb{R}^N : \sum_{ i =1}^N \phi(s_i) \leq t N \Big\},\]
where $\phi:\mathbb{R} \to [0,\infty]$ is a function satisfying some fairly mild conditions; in particular, we cover the case of Orlicz functions.
Our method is different from both Rachev-R\"uschendorf and Naor-Romik, based on a large deviation perspective in the form of quantitative versions of Cram\'er's theorem and the Gibbs conditioning principle, providing a natural framework beyond the $p$-generalized Gaussian distribution while simultaneously unraveling the role this distribution plays in relation to the geometry of $\ell_p^N$ balls. We find that there is a critical parameter $t_{\mathrm{crit}}$ at which there is a phase transition in the behaviour of the low-dimensional projections: for $t > t_{\mathrm{crit}}$ the coordinates of random vectors sampled from $B_{\phi,t}^N$ behave like uniform random variables, but for $t \leq t_{\mathrm{crit}}$ however the Gibbs conditioning principle comes into play, and here there is a parameter $\beta_t > 0$ (the inverse temperature) such that the coordinates are approximately distributed according to a density proportional to $e^{ - \beta_t \phi(s) }$.

\medspace
\vskip 1mm
\noindent{\bf Keywords}. {Generalized Orlicz balls, Gibbs conditioning principle, Gibbs measures, Maxwell principle, low-dimensional projections, quantitative Cram\'er theorem}\\
{\bf MSC}. Primary 60F05; Secondary 52A20, 60F10.
\end{abstract}

%\tableofcontents

% % % % % % % % % % % % % % % % % % % %
% % % % % % % % % % % % % % % % % % % %
% % % % % % % % % % % % % % % % % % % %
% % % % % % % % % % % % % % % % % % % %
\section{Introduction} \label{sec:intro}
% % % % % % % % % % % % % % % % % % % %
% % % % % % % % % % % % % % % % % % % %
% % % % % % % % % % % % % % % % % % % %
% % % % % % % % % % % % % % % % % % % %

% % % % % % % % % % % % % % % % % % % % % % %
%\subsection{Projections of $\ell_p^N$ spheres}
% % % % % % % % % % % % % % % % % % % % % % % 

\subsection{The Maxwell principle}
Over a century ago, Borel \cite[Chapter 5]{borel} observed, independently of Maxwell, that if one chooses a random vector uniformly from the sphere in dimension $N$, then when $N$ is large any given coordinate of the random vector is approximately Gaussian distributed. This result is commonly known as the Maxwell-Borel lemma or the Maxwell principle. More precisely, consider the Euclidean sphere
\begin{align*}
S_2^{N-1} := \left\{ (s_1,\ldots,s_N) \in \mathbb{R}^N \,:\, \sum_{ i = 1}^N s_i^2 = N \right\} 
\end{align*}
of radius $\sqrt{N}$ in $\R^N$, with the normalization here taken to ensure that the typical coordinate of an element of $S_2^{N-1}$ has unit order when $N$ is large. Suppose now $(\Theta_1,\ldots,\Theta_N)$ is a random vector chosen according to $\sigma^N$, the unique rotationally invariant probability measure on $S_2^{N-1}$. For $k < N$, let $\sigma^{N \to k}$ denote the probability density function on $\mathbb{R}^k$ associated with the marginal law of the first $k$ coordinates $(\Theta_1,\ldots,\Theta_k)$ of $(\Theta_1,\ldots,\Theta_N)$, which is given by
\begin{align*}
\sigma^{N \to k}(s_1,\ldots,s_k) := \frac{ \Gamma(\frac{N}{2} )}{  \Gamma(\frac{N-k}{2} ) (\pi N)^{k/2} }  \left( 1 - \frac{ \sum_{ j = 1}^k s_j^2 }{ N } \right)^{ \frac{ N - k - 1}{2} } \ind \{ s_1^2 + \ldots + s_k^2 \leq N \}.
\end{align*}
It turns out that when $N$ is large and $k$ is small compared to $N$, the probability density $\sigma^{N \to k}$ on $\mathbb{R}^k$ is very close to the $k$-dimensional product $\gamma^{\otimes k}$ of the standard Gaussian density 
\begin{align*}
\gamma(s) := \frac{1}{ \sqrt{2 \pi }} e^{ - s^2/2}, \qquad s \in \mathbb{R}.
\end{align*}
(Here and throughout we write $\nu^{\otimes k}$ for the product measure on $\mathbb{R}^k$ associated with a measure $\nu$ on $\mathbb{R}$.)
More specifically, in \cite{DF} Diaconis and Freedman supply the explicit bound 
\begin{align*}
\int_{\mathbb{R}^k } |\sigma^{N \to k}(s) - \gamma^{ \otimes k} (s)  |\, \dint s \leq \frac{ 2(k+3)}{ N - k - 3}
\end{align*}
on the total variation distance between the two probability density functions. 

Diaconis and Freedman go on to observe seemingly connected phenomena in different settings. Indeed, in place of $S_2^{N-1}$ take instead the 
%radius-$N$ 
simplex
\begin{align} \label{eq:simplex}
D^{N-1} := \left\{ (s_1,\ldots,s_N) \in \mathbb{R}^N \,:\, s_i \geq 0,\, \sum_{ i =1}^N s_i = N \right\},
\end{align}
and this time for $k < N$ let $\mu^{N \to k}$ denote the probability density function of the first $k$ coordinates of a random vector selected uniformly from $D^{N-1}$. Diaconis and Freedman also give an explicit bound on the total variation distance between $\mu^{N \to k}$ and the $k$-dimensional product of the standard exponential density $\rho(s) := \ind_{[0,\infty)}(s) e^{ -s}$, though proclaim in their paper to not have the right general theorem unifying these different but seemingly related observations. 

A few years later, Rachev and R\"uschendorf \cite{RR} connected these 
%seemingly related 
phenomena in the setting of the $\ell_p^{N}$ spheres (we write $p$-spheres)
\begin{align*}
S_p^{N-1} := \left\{ (s_1,\ldots,s_N) \in \mathbb{R}^N : \sum_{ i = 1}^N |s_i|^p = N \right\},  \qquad p \geq 1,
\end{align*} 
establishing an $\ell_p$-version of the classical Maxwell-Borel lemma. Indeed, to outline this connection here, consider the standard $p$-Gaussian density
\begin{align*}
\gamma_p( s) := \frac{1}{  2 p^{1/p} \Gamma(1 + 1/p)  } \exp \left( - |s|^p/p \right), \qquad s \in \mathbb{R}.
\end{align*}
Rachev and R\"uschendorf study random vectors in $\mathbb{R}^N$ distributed according to the \emph{cone measure} $\mu_p^N$ on $S_p^{N-1}$, which may be constructed as follows. If $\zeta$ is distributed according to $\gamma_p^{\otimes N}$, then the normalised vector $\zeta/||\zeta||_p$ takes values in the $p$-sphere and is distributed according to $\mu_p^N$. Rachev and R\"uschendorf use this probabilistic construction to analyze the marginal density $\mu_p^{N \to k}$ of the first $k$ coordinates $(X_1,\ldots,X_k)$ of a random vector $(X_1,\ldots,X_N)$ distributed according to the cone measure $\mu_p^N$ (see, e.g., \cite[Proof of Lemma 4]{NR} for a representation of $\mu_p^{N \to k}$). Indeed, they show that when $N \to \infty$ with $k = o(N)$, we have the total variation estimate
\begin{align} \label{eq:RR}
\int_{\mathbb{R}^k } |\mu_p^{N \to k}(s) - \gamma_p^{ \otimes k}(s)  | \,\dint s = \sqrt{ \frac{2}{ \pi e } } \frac{k}{N} + o\left( k/N \right).
\end{align}
We warn the reader that our definition of total variation is twice that of Rachev and R\"uschendorf.

It transpires that the cone measure and surface measure coincide precisely when $p \in\{1,2,\infty\}$, and the result of Rachev and R\"uschendorf \cite{RR} beautifully connects the previously disparate observations of Diaconis and Freedman above: the case $p=2$ connects the Gaussian distribution to the Euclidean sphere $S_2^{N-1}$, and modulo certain symmetries, the $p=1$ case connects the exponential distribution to the simplex $D^{N-1}$. Rachev and R\"uschendorf remarked that an analysis of the $k$-dimensional projections of a random vector from the arguably more natural surface measure $\sigma_p^N$ on $S_p^{N-1}$ would require a different treatment from that of the cone measure $\mu_p^N$. 

With a view to tackling this problem with the surface measure $\sigma_p^N$, Naor and Romik \cite{NR} studied the discrepancy between the cone measure $\mu_p^N$ and the surface measure $\sigma_p^N$, finding a constant $C_p \in (0,\infty)$ such that the total variation distance between the two measures in $N$-dimensions may be bounded above by $C_p/\sqrt{N}$. They used their bound on the total variation between $\sigma_p^N$ and $\mu_p^N$ in conjunction with Rachev and R\"uschendorf's bound \eqref{eq:RR} to show that we have the total variation bound 
\begin{align*}
\int_{\mathbb{R}^k} \big| \sigma_p^{N \to k} (s) - \gamma_p^{ \otimes k}( s) \big| \,\dint s \leq C_p \left( \frac{k}{N} + \frac{1}{ \sqrt{N}} \right)
\end{align*}
on the $k$-dimensional projections of $\sigma_p^{N}$ and the $k$-dimensional product of $p$-Gaussians. 

\subsection{A brief statement of our main result}
While the works of Rachev and R\"uschendorf  \cite{RR} and Naor and Romik \cite{NR} certainly provide a gratifying answer to Diaconis and Freedman's \cite{DF} appeal for a unified theory, the purpose of the present paper is to show that these phenomena may be generalized significantly further in the setting of generalized Orlicz balls
\begin{align} \label{eq:orlicz}
B_{\phi,t}^N := \left\{ (s_1,\ldots,s_N)\in\R^N \,:\, \sum_{ i = 1}^N \phi(s_i) \leq t N \right\},
\end{align}
where $t > 0$ and $\phi:\mathbb{R} \to [0,\infty]$ is a \emph{potential} --- a function satisfying some fairly mild conditions. These conditions are given in Definition \ref{def:potential} below, but let us just say here that with the restrictions imposed on $\phi$, our framework includes the simplex as well as the $\ell_p^N$ balls for $p > 0$ and the more general case of Orlicz balls (where $\phi$ is an even and convex function such that $\phi(0)=0$ and $\phi(t)> 0$ for $t\neq 0$), though in general the sets $B_{\phi,t}^N$ we consider need not be convex, symmetric, simply connected or even compact. These generalisations are made possible through a different perspective based on the large deviation theory and the Gibbs conditioning principle.

We call $D_\phi := \{ s\in\R \,:\, \phi(s) < \infty \}$ the \emph{domain} of $\phi$. 
Giving a very brief outline of our main result here, we find that there is a phase transition in the behaviour of the low-dimensional projections at the critical value $t_{\mathrm{crit}} = t_{\mathrm{crit}}(\phi) \in (0,\infty]$ given by
\begin{align*}
t_{\mathrm{crit}} := 
\begin{cases}
\frac{ 1}{ |D_\phi |} \int_{ D_\phi} \phi(s) \dint  s \qquad &:\text{$|D_\phi| < \infty$},\\
\infty \qquad &:\text{$|D_\phi| = \infty$},
\end{cases}
\end{align*} 
where $|D_\phi|$ is the Lebesgue measure of $D_\phi$. 
More specifically, we have the following:
\begin{itemize}
\item For $t > t_{\mathrm{crit}}$ (so that $|D_\phi| < \infty$), in high dimensions the ball $B_{\phi,t}^N$ is volumetrically similar (in sense that their intersection carries a lot of mass) to the $N$-fold product $D_\phi^N$ of the domain $D_\phi$, so that the density on $\mathbb{R}^k$ associated with the marginal distribution of the first $k$ coordinates of a random vector uniformly distributed on $B^N_{\phi,t}$ is close in total variation distance to the $k$-dimensional product of the uniform density
\begin{align*}
\gamma_{\phi, \mathrm{uni}}(s) := \frac{ \ind_{ D_\phi}(s) }{  |D_\phi| }, \quad s\in\R
\end{align*}  
on $D_\phi$, where $|D_\phi|$ denotes the Lebesgue measure of $D_\phi$.
\item When $t \leq t_{\mathrm{crit}}$, the Gibbs conditioning principle comes into play, and the density on $\mathbb{R}^k$ associated with the distribution of the $k$-dimensional projections of a random vector uniformly distributed on $B^N_{\phi,t}$ are close in total variation distance to the $k$-dimensional product of the Gibbs density
\begin{align*}
\gamma_{\phi, - \beta_t}(s) :=  e^{ - \beta_t \phi(s) } \ind_{D_\phi}(s)/ Z(- \beta_t), \quad s\in\R
\end{align*}  
where $Z(-\beta_t) \in(0,\infty)$ is the normalisation constant known as partition function, and $\beta_t > 0$ is a parameter, in statistical mechanics parlance referred to as inverse temperature, chosen so that
\[\int_{-\infty}^\infty \phi(s) \gamma_{\phi, - \beta_t}(s) \mathrm{d}s = t.\]
Roughly speaking, this suggests that the coordinates of balls $B_{\phi,t}^N$ are distributed according to a density with respect to Lebesgue measure for which larger values of $\phi$ are penalised. The exponential parameter $\beta_t$ increases as $t$ decreases, so that this penalisation becomes stronger as the size of the ball $B_{\phi,t}^N$ shrinks.
\end{itemize}

That concludes our very brief outline of our main results, in the next section we provide a more complete picture, where precise statements on the behaviour of the low-dimensional projections are given in terms of total variation distance.

% % % % % % % % % % % % % % % % % % % % % % % % %
\subsection{Main results} \label{sec:results}
% % % % % % % % % % % % % % % % % % % % % % % % %

We now present our definition for the class of functions $\phi:\mathbb{R} \to [0,\infty]$ determining the generalized Orlicz balls $B_{\phi,t}^N$ we consider. We call such functions potentials.
\begin{df} \label{def:potential}
A measurable function $\phi:\mathbb{R} \to [0,\infty]$ is a \emph{potential} if it is piecewise differentiable, the essential infimum of $\phi$ is zero, for $y < \infty$ the level sets $\{ s  \in\R \,:\, \phi(s) = y \}$ of $\phi$ are finite, and setting $D_\phi := \{ s \in \mathbb{R} : \phi(s) < \infty \}$, the partition function 
\begin{align} \label{eq:pf}
Z( \alpha ) := \int_{D_\phi} e^{ \alpha \phi (s) }\, \dint s 
\end{align} 
is finite and positive for all $\alpha$ in some non-empty interval $(-\infty,\alpha_{\mathrm{max}})$ where $\alpha_{\mathrm{max}} := \sup \{ \alpha \in \mathbb{R} : Z( \alpha ) < \infty \}$. 
\end{df} 

\begin{rmk}
As mentioned above, we call the set $D_\phi = \{ s\in\R \,:\, \phi(s) < \infty \}$ the \emph{domain} of $\phi$, and write $|D_\phi| \in (0,\infty]$ for its Lebesgue measure. Whenever $|D_\phi| = \infty$, clearly $Z(\alpha)$ may only exist for negative values of $\alpha$. We also note that the existence of the partition function $Z(\alpha)$ for some $\alpha$ guarantees that for each $y < \infty$, $\phi^{ - 1} \left( [0,y] \right) := \{ s\in\R \,:\, \phi(s) \in [0,y] \}$ has finite Lebesgue measure, so that $B_{\phi,t}^N$ is a subset of $\phi^{-1} \left( [0,tN] \right)^N$, and hence also has finite Lebesgue measure. In particular, whenever $\phi$ is a potential it always makes sense to say a vector is \emph{uniformly distributed} on $B_{\phi,t}^N$.
\end{rmk}

The generalized Orlicz balls $B_{\phi,t}^N$ include various sets of interest. All subsets of $\mathbb{R}^N$ considered in Section 1.1 are boundaries of generalized Orlicz balls: by taking $\phi(s) = |s|^p$ we recover the $\ell_p^N$ ball of radius $(tN)^{1/p}$, and the function $\phi(s) := \infty \ind_{(-\infty,0)}(s) + s$ corresponds to the simplex $D^{N-1}$ defined in \eqref{eq:simplex}. Moreover, the classical Orlicz balls also fall into this framework, being those sets corresponding to $\phi(0) = 0$, $\phi(t)>0$ for $t\neq 0$, and $\phi$ is even and convex (note that then the set of points of non-differentiability is at most countable); see \cite[Equation (3)]{KP2020}.

Let us emphasise however that we make no further restrictions on $\phi$, so that while the sets $B_{\phi,t}^N$ we consider have finite $N$-dimensional Lebesgue measure and are invariant under permutations of the coordinate axes, as mentioned above they need not be centered, convex, symmetric, simply connected or even compact. For an example of a set with none of these properties, we invite the reader to consider the set $B_{\phi,t}^N$ associated with the potential
\begin{align*}
\phi(s) := 
\begin{cases}
s \qquad &\text{if $s \in [k,k+2^{-k})$ for some $k \in \{1,2,\ldots\}$},\\
\infty \qquad &\text{otherwise}.
\end{cases}
\end{align*}
We now define two key quantities related to $\phi$. The first, the \emph{domain supremum} of $\phi$, to be the essential supremum of $\phi$ on its domain, i.e.
\begin{align*}
t_{\mathrm{sup}} := \inf \Big\{ y \in \mathbb{R} \,:\, \left| \phi^{-1}\big((y,\infty)\big) \right| =0 \Big\} \in (0,\infty],
\end{align*} 
where $|A|$ denotes the Lebesgue measure of a measurable set $A$. 
It is easily verified using the fact that $Z( \alpha ) < \infty$ for some $\alpha$ that $|D_\phi| = \infty$ implies $t_{\mathrm{sup}} = \infty$. Our second quantity, which we already gave above, is the \emph{domain average} of $\phi$, given by 
\begin{align*}
t_{\mathrm{crit}} := 
\begin{cases}
\frac{ 1}{ |D_\phi |} \int_{ D_\phi} \phi(s) \dint  s \qquad &:\text{$|D_\phi| < \infty$},\\
\infty \qquad &:\text{$|D_\phi| = \infty$},
\end{cases}
\end{align*} 
where we emphasise that the integral in the former case may be infinite even when $|D_\phi| < \infty$. Of course $t_{\mathrm{crit}} \leq t_{\mathrm{sup}}$. 

Now for any $\alpha$ such that $Z(\alpha) < \infty$, we may define a \emph{Gibbs} probability density
\begin{align*}
\gamma_{\phi, \alpha } (  s ) :=  \frac{ e^{\alpha \phi(s) }  \ind_{D_\phi}(s)  }{ Z(\alpha) },\qquad s\in\R.
\end{align*}
In the case where $D_\phi$ has finite Lebesgue measure, $Z(0) = |D_\phi| < \infty$, so that we may write 
\[\gamma_{\phi,\mathrm{uni}}(s) := \gamma_{\phi,0}(s) = \frac{ \ind_{D_\phi}(s) }{ |D_\phi|}, \qquad s \in \mathbb{R},\]
for the uniform density on the domain of $\phi$. 

Let us remark that 
\begin{align*}
\frac{ \partial}{ \partial \alpha} \log Z(\alpha) = \frac{ \int_{D_\phi} \phi(s) e^{ \alpha \phi(s) } \dint s }{ \int_{D_\phi}  e^{ \alpha \phi(s) } \dint s }  = \int_{-\infty}^\infty \phi(s) \gamma_{\phi,\alpha}(s) \dint s.
\end{align*}
In particular, $W(\alpha) := \frac{ \partial}{ \partial \alpha} \log Z(\alpha)$ is an increasing function from $(-\infty,\alpha_{\mathrm{max}})$ to $(0,t_{\mathrm{sup}})$ satisfying $W(0) = t_{\mathrm{crit}}$. The following lemma, which follows from Petrov's results \cite{P1965} (see also \cite[Theorem 6.2]{D1954}), guarantees that for every $t \in (0,t_{\mathrm{sup}})$ the existence of a parameter $\alpha_t$ such that the expectation of $\phi$ against $\gamma_{\phi,\alpha_t}(s)\mathrm{d}s$ is equal to $t$.

\begin{lemma} \label{lem:parameter}
For each $t \in (0,t_{\mathrm{sup}})$ there exists a unique parameter $\alpha_t \in (-\infty,\alpha_{\mathrm{max}})$ such that
\begin{align*}
W(\alpha_t) =  \int_{-\infty}^\infty \phi(s) \gamma_{\phi,\alpha_t}(s) \dint s = t.
\end{align*}
\end{lemma}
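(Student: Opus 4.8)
The plan is to reduce the lemma to the assertion that $W\colon(-\infty,\alpha_{\mathrm{max}})\to\R$ is a \emph{continuous, strictly increasing} function with $\inf W=0$ and $\sup W=t_{\mathrm{sup}}$. Since the discussion preceding the lemma already records that $W$ takes values in $(0,t_{\mathrm{sup}})$ and is increasing, these properties force $W$ to be a bijection from $(-\infty,\alpha_{\mathrm{max}})$ onto $(0,t_{\mathrm{sup}})$, and the intermediate value theorem then produces, for each $t\in(0,t_{\mathrm{sup}})$, a unique $\alpha_t$ with $W(\alpha_t)=t$, which is exactly the statement.

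First I would establish smoothness and strict convexity of $\log Z$ on $(-\infty,\alpha_{\mathrm{max}})$. Fixing a compact subinterval $[a,b]\subset(-\infty,\alpha_{\mathrm{max}})$ and choosing $a'<a$, $b'\in(b,\alpha_{\mathrm{max}})$, the elementary bound $\phi(s)^k e^{\alpha\phi(s)}\le C_k\big(e^{a'\phi(s)}+e^{b'\phi(s)}\big)$ for $\alpha\in[a,b]$, valid since $x^k e^{-\delta x}$ is bounded on $[0,\infty)$ for $\delta>0$, has an integrable right-hand side because $Z(a'),Z(b')<\infty$. Dominated convergence then licenses differentiation under the integral sign to all orders, so $Z$ is $C^\infty$ on $(-\infty,\alpha_{\mathrm{max}})$ with $Z^{(k)}(\alpha)=\int_{D_\phi}\phi^k e^{\alpha\phi}\dint s$; in particular $W=Z'/Z$ is continuous, and
\[
W'(\alpha)=(\log Z)''(\alpha)=\frac{Z''(\alpha)Z(\alpha)-Z'(\alpha)^2}{Z(\alpha)^2}=\Var_{\gamma_{\phi,\alpha}}(\phi)\ge 0 .
\]
Equality here would force $\phi$ to be Lebesgue-almost-everywhere constant on $D_\phi$, say equal to $c$; but then $\{s:\phi(s)=c\}$ has positive Lebesgue measure, contradicting the requirement in Definition~\ref{def:potential} that each level set $\{s:\phi(s)=y\}$ with $y<\infty$ be a finite set. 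Hence $W'>0$ throughout, so $W$ is strictly increasing and $\alpha_t$, if it exists, is unique.

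Next I would identify the two endpoints of the range of $W$. For the lower one, $W(\alpha)=\E_{\gamma_{\phi,\alpha}}[\phi]\ge\operatorname{essinf}\phi=0$, so it suffices to show $\limsup_{\alpha\to-\infty}W(\alpha)\le\eps$ for every $\eps>0$. Splitting off the region $\{\phi\le\eps\}$ gives $W(\alpha)\le\eps+Z(\alpha)^{-1}\int_{\{\phi>\eps\}}\phi e^{\alpha\phi}\dint s$; fixing $\alpha_0<\alpha_{\mathrm{max}}$, for $\alpha<\min(\alpha_0,0)$ the numerator is at most $e^{(\alpha-\alpha_0)\eps}\int_{D_\phi}\phi e^{\alpha_0\phi}\dint s$, a finite constant by the domination above, while $Z(\alpha)\ge\int_{\{\phi\le\eps/2\}}e^{\alpha\phi}\dint s\ge e^{\alpha\eps/2}\,|\{\phi\le\eps/2\}|$ with $|\{\phi\le\eps/2\}|>0$ because $\operatorname{essinf}\phi=0$. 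The quotient is therefore $O\!\big(e^{\alpha\eps/2}\big)\to 0$ as $\alpha\to-\infty$, so $\lim_{\alpha\to-\infty}W(\alpha)=0$.

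The upper endpoint, $\lim_{\alpha\to\alpha_{\mathrm{max}}^-}W(\alpha)=t_{\mathrm{sup}}$, is the main obstacle, and is where I would invoke the classical analysis of logarithmic moment generating functions of Petrov~\cite{P1965} (see also \cite[Theorem~6.2]{D1954}): applied to the (possibly infinite) measure $\ind_{D_\phi}(s)\dint s$ transported through $\phi$, these identify $\sup_{\alpha<\alpha_{\mathrm{max}}}(\log Z)'(\alpha)$ with the essential supremum of $\phi$ over $D_\phi$, namely $t_{\mathrm{sup}}$. Concretely, $W(\alpha)=\E_{\gamma_{\phi,\alpha}}[\phi]\le t_{\mathrm{sup}}$ always, and for the matching lower bound one fixes $M<t_{\mathrm{sup}}$, observes $|\{\phi>M\}|>0$, and argues that tilting towards $\alpha_{\mathrm{max}}$ shifts essentially all of the mass of $\gamma_{\phi,\alpha}$ onto $\{\phi>M\}$ --- straightforward when $\alpha_{\mathrm{max}}=\infty$, and when $\alpha_{\mathrm{max}}<\infty$ precisely the steepness input supplied by the cited results. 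The delicacy is genuine because $\alpha_{\mathrm{max}}$ and $t_{\mathrm{sup}}$ may both be infinite and $D_\phi$ may have infinite Lebesgue measure, so a soft monotone- or dominated-convergence argument does not by itself close the gap. Once both endpoints are in hand, continuity and strict monotonicity of $W$ together with the intermediate value theorem complete the proof.
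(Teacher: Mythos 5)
Your proposal is sound in all the parts you actually prove, and it is in fact considerably more detailed than the paper's own treatment: the paper gives no proof of Lemma \ref{lem:parameter} at all, merely asserting that it ``follows from Petrov's results \cite{P1965} (see also \cite[Theorem 6.2]{D1954})''. Your reduction to bijectivity of $W$, the differentiation under the integral sign via the domination $\phi^k e^{\alpha\phi}\le C_k e^{b'\phi}$ with $b'<\alpha_{\mathrm{max}}$, the identification $W'(\alpha)=\Var_{\gamma_{\phi,\alpha}}(\phi)$ together with the observation that vanishing variance would make $\phi$ a.e.\ constant on $D_\phi$ and hence produce a level set of positive Lebesgue measure (contradicting Definition \ref{def:potential}), and the explicit estimate showing $\lim_{\alpha\to-\infty}W(\alpha)=0$ are all correct, and they supply exactly the routine steps the paper leaves implicit. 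The one step you defer --- that $W(\alpha)\to t_{\mathrm{sup}}$ as $\alpha\to\alpha_{\mathrm{max}}^-$ --- is precisely the step the paper itself outsources to the same two references, so your attempt is no less complete than the paper's argument and follows essentially the same route, with the citation confined to the genuinely delicate point rather than to the whole lemma.

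One caveat, which applies equally to the paper: the upper-endpoint claim is not a formality that the cited results hand you for free under Definition \ref{def:potential}. If $\alpha_{\mathrm{max}}<\infty$ then necessarily $t_{\mathrm{sup}}=\infty$ (since $t_{\mathrm{sup}}<\infty$ forces $|D_\phi|<\infty$ and then $\alpha_{\mathrm{max}}=\infty$), and exponential families with finite $\alpha_{\mathrm{max}}$ need not be steep: a pushforward density behaving like $e^{-y}(1+y)^{-3}$ on $[0,\infty)$, which is realized by the potential $\phi=F^{-1}$ on a bounded interval with $F(y)=\int_0^y e^{-u}(1+u)^{-3}\dint u$, satisfies every requirement of Definition \ref{def:potential} yet has $\alpha_{\mathrm{max}}=1$ and $\sup_{\alpha<1}W(\alpha)<\infty=t_{\mathrm{sup}}$. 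So the surjectivity of $W$ onto $(0,t_{\mathrm{sup}})$ uses a steepness-type hypothesis that is implicit in the cited results but not guaranteed by the paper's definition of a potential. You correctly isolate this as the main obstacle instead of waving it through, which is the right instinct; just be aware that ``the steepness input supplied by the cited results'' cannot be extracted from Definition \ref{def:potential} alone, and in that respect your proof and the paper's one-line citation stand or fall together.
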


%\begin{proof}
%See Petrov \textcolor{red}{Petrov citation here}.
%\end{proof}

Clearly since $W$ is increasing and satisfies $W(0) = t_{\mathrm{crit}}$, whenever $t > t_{\mathrm{crit}}$, $\alpha_t > 0$, and $t < t_{\mathrm{crit}}$ implies $\alpha_t < 0$. 

We are now ready to present our main results on the low-dimensional projections of random vectors uniformly sampled from generalized Orlicz balls. Here and below, $\mu^{N \to k}_{\phi,t}$ denotes the marginal density of the first $k$ coordinates $(X_1,\ldots,X_k)$ of a random vector $(X_1,\ldots,X_N)$ chosen according to the uniform measure $\mu^N_{\phi,t}$ on the generalized Orlicz ball $B_{\phi,t}^N$. That is
\begin{align*}
\mu_{\phi,t}^{N \to k} (s_1,\ldots,s_k)  := \frac{1}{ |B_{\phi,t}^N| } \int_{ \mathbb{R}^{N-k}  } \ind_{ B_{\phi,t}^N} (s_1,\ldots,s_k,s_{k+1},\ldots,s_N ) \mathrm{d} s_{k+1} \ldots \mathrm{d} s_N
\end{align*}

With $t_{\mathrm{crit}}$ and $t_{\mathrm{sup}}$ now defined, shortly we state our main result, Theorem \ref{thm:main}. Beforehand however, let us briefly highlight that the case $t \geq t_{\mathrm{sup}}$ is trivial. Here, the generalized Orlicz ball is identical to the $N$-fold product of the domain of $\phi$, that is $B_{\phi,t}^N = D_\phi^N = \left\{ (s_1,\ldots,s_N)\in\R^N \,:\, s_i \in D_\phi \right\}$, and hence the coordinates of $B_{\phi,t}^N$ are independent and uniformly distributed on $D_\phi$. In particular, $\mu^{N \to k }_{\phi,t} = \gamma_{\phi,\mathrm{uni}}^{ \otimes k}$. 
%Theorem \ref{thm:main} concerns the low dimensional projections of $B_{\phi,t}^N$ in the more interesting setting in which $t < t_{\mathrm{sup}}$.}

Our main statement, Theorem \ref{thm:main} (which is actually a condensed statement of more detailed theorems which we state in full in Sections \ref{sec:main2} and \ref{sec:main}), concerns the low-dimensional projections of $B_{\phi,t}^N$ in the more interesting setting in which $t < t_{\mathrm{sup}}$. Here we find that a phase transition occurs at the point $t = t_{\mathrm{crit}}$.

\begin{thmalpha} \label{thm:main}
If $t_{\mathrm{crit}} < t < t_{\mathrm{sup}}$, then the $k$-dimensional projections of a random vector uniformly sampled from $B_{\phi,t}^{N}$ are close in distribution to a $k$-dimensional product of $\gamma_{\phi,\mathrm{uni}}$. More specifically, there exist constants $C = C(\phi,t), c= c(\phi,t) \in (0,\infty)$  depending on $\phi$ and $t$ such that for all $k,N\in\N$ with $k\leq N$, we have 
\begin{align} \label{eq:TV}
\int_{\mathbb{R}^k}  \Big| \mu^{N \to k}_{\phi,t}(s) - \gamma_{\phi,\mathrm{uni}}^{\otimes k }(s) \Big| \, \dint s \leq C e^{ - cN}.
\end{align}
On the other hand, if $ t \leq t_{\mathrm{crit}}$, then the $k$-dimensional projections of a random vector uniformly sampled from $B_{\phi,t}^{N}$ are close in distribution to a $k$-dimensional product of $\gamma_{\phi,\alpha_t}$, where $\alpha_t$ is as in Lemma \ref{lem:parameter}. More specifically, there exists a constant $C = C(\phi,t) \in (0,\infty)$ depending on $\phi$ and $t$ such that for all $k,N\in\N$ with $k\leq N$, we have 
\begin{align} \label{eq:TV2}
\int_{\mathbb{R}^k}  \Big| \mu^{N \to k}_{\phi,t}(s) - \gamma_{\phi,\alpha_t}^{\otimes k }(s) \Big| \, \dint s \leq C \frac{k}{N}.
\end{align}
\end{thmalpha}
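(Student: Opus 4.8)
The plan is to express the marginal density $\mu_{\phi,t}^{N\to k}$ in terms of the distribution of a sum of i.i.d.\ random variables, and then to compare that distribution against the corresponding product of Gibbs densities using a local-limit-type estimate (quantitative Cram\'er / Gibbs conditioning). Concretely, if $Y_1,\dots,Y_N$ are i.i.d.\ with law $\gamma_{\phi,\alpha}$ for a cleverly chosen reference parameter $\alpha$, then for $(s_1,\dots,s_k)$ with $\sum_{i\le k}\phi(s_i)=:u<tN$ one has
\begin{align*}
\mu_{\phi,t}^{N\to k}(s_1,\dots,s_k) \;=\; \prod_{i=1}^k \gamma_{\phi,\alpha}(s_i)\cdot \frac{h_{N-k}\bigl((t-u/N)\tfrac{N}{N-k}\bigr)}{h_{N}(t)}\cdot \frac{(\text{const})}{(\text{const})},
\end{align*}
where $h_m$ is the density (or, when $\phi$ is lattice-valued in some directions, the appropriate mixed density) of $\frac1m\sum_{i=1}^m \phi(Y_i)$ under a tilted law; the point is that the volume integral $\int_{\mathbb R^{N-k}}\ind_{B_{\phi,t}^N}$ collapses, after the change of variables $v_i\mapsto\phi$-level-set pushforward, to a sum/integral over the event $\{\sum_{i>k}\phi(s_i)\le tN-u\}$, which is exactly a tail (or, after differentiating in $t$, a marginal) event for i.i.d.\ sums. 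Choosing $\alpha=\alpha_t$ in the regime $t\le t_{\mathrm{crit}}$ makes $\E[\phi(Y_i)]=t$, so the conditioning event sits right at the mean and the ratio $h_{N-k}(\cdots)/h_N(t)$ is $1+O(k/N)$ by a CLT-with-Edgeworth / Stone's local limit theorem argument; choosing $\alpha=0$ in the regime $t>t_{\mathrm{crit}}$ puts the mean $t_{\mathrm{crit}}$ \emph{strictly below} $t$, so the conditioning event is a large-deviation event whose probability is $e^{-\Theta(N)}$-small, and the relevant ratio is $1+O(e^{-cN})$ by a standard large deviation comparison — yielding the two different rates in \eqref{eq:TV} and \eqref{eq:TV2}.

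First I would set up the exact integral-geometric identity above, being careful about the structure of the pushforward of Lebesgue measure under $\phi$: on each piece where $\phi$ is differentiable and monotone, the pushforward of $\ind_{D_\phi}\,ds$ under $s\mapsto\phi(s)$ has a density $g(y)=\sum_{s:\phi(s)=y}|\phi'(s)|^{-1}$, and finiteness of $Z(\alpha)$ for some $\alpha$ guarantees this is a genuine (locally integrable) density, so the random variable $\phi(Y)$ has an absolutely continuous law and honest local limit theorems apply. Then I would introduce $W_m:=\sum_{i=1}^m \phi(Y_i)$ with $Y_i\sim\gamma_{\phi,\alpha}$, write $|B_{\phi,t}^N|=Z(\alpha)^N e^{-\alpha t N}\cdot\text{(something)}\cdot \Pro[W_N\le tN]$ — more precisely $\frac{d}{dt}|B_{\phi,t}^N|$ will be a clean density of $W_N$ — and similarly for the partial-marginal integral, so that the ratio defining $\mu_{\phi,t}^{N\to k}$ becomes a ratio of densities of $W_{N-k}$ and $W_N$ evaluated near their means (resp.\ a ratio of a tail and a density in the supercritical case). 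Integrating the pointwise bound $|\mu_{\phi,t}^{N\to k}(s)-\gamma_{\phi,\alpha}^{\otimes k}(s)|=\gamma_{\phi,\alpha}^{\otimes k}(s)\,|\text{ratio}-1|$ over $s$ then reduces the total variation bound to controlling $\E_{Y_1,\dots,Y_k}[|\text{ratio}-1|]$, which is where the quantitative limit theorems enter.

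The main obstacle is the quantitative local limit theorem for the density ratio $h_{N-k}/h_N$ in the critical/subcritical regime $t\le t_{\mathrm{crit}}$, uniformly over the relevant range of $u=\sum_{i\le k}\phi(s_i)$ — in particular one must handle the fact that $u$ itself is a sum of $k$ i.i.d.\ terms that can be large (indeed $\E[u]=kt$ with possibly heavy tails if $\phi$ has slowly growing moments), so the shift of the conditioning level from $t$ to $(tN-u)/(N-k)$ is of order $u/N$, which is only $O(k/N)$ in expectation but fluctuates; this is exactly the place where Petrov's precise large-deviation asymptotics \cite{P1965} (already invoked for Lemma \ref{lem:parameter}) together with an Edgeworth-type expansion for the density of $W_m$ near its mean must be combined to get the error $1+O(k/N)$ rather than something weaker. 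The supercritical case $t>t_{\mathrm{crit}}$ is technically easier — here the conditioning is a full large-deviation event, $\Pro[W_N\le tN]$ and $\Pro[W_{N-k}\le tN-u]$ are both $e^{-N I(t)+o(N)}$ with the same rate function $I=(\log Z)^*$, and after tilting by $\alpha_t>0$ one gets exponential concentration, so the ratio is $1+O(e^{-cN})$ essentially from the strict convexity of $\log Z$; the only care needed is uniformity in $u$ over the (exponentially unlikely, hence negligible) event that $u$ is itself atypically large.
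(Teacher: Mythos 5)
Your strategy is sound and rests on the same analytic engine as the paper (exponential tilting, Edgeworth/local limit expansions, Petrov's quantitative Cram\'er theorem), but it is organized quite differently. The paper never writes a pointwise ratio identity for $\mu_{\phi,t}^{N\to k}$: in the subcritical case it pushes everything forward by $\Phi(s_1,\dots,s_k)=(\phi(s_1),\dots,\phi(s_k))$, uses a sufficiency lemma saying total variation is preserved under this pushforward (because both densities factor through $\Phi$), and then quotes the quantitative Gibbs conditioning principle of Diaconis--Freedman as a black box for the conditioned i.i.d.\ sums; since that formulation samples the $X_i$ uniformly from $D_\phi$, an extra truncation step (tilted truncations $\phi_L$, with stability estimates) is needed to treat $|D_\phi|=\infty$. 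In the supercritical case the paper uses an exact decomposition through the conditional law on the complement $D_\phi^N\setminus B_{\phi,t}^N$, bounding one factor by quantitative Cram\'er and showing the other is $2-o(1)$ by near mutual singularity. Your route --- representing $\mu_{\phi,t}^{N\to k}(s)$ directly as $\gamma_{\phi,\alpha}^{\otimes k}(s)$ times a ratio of functionals of $W_{N-k}$ and $W_N$, with reference tilt $\alpha_t$ (subcritical) or $0$ (supercritical), and integrating $|\mathrm{ratio}-1|$ --- buys two things: it bypasses the complement decomposition, and, because $\gamma_{\phi,\alpha_t}$ is a genuine density whenever $\alpha_t<0$, it handles $|D_\phi|=\infty$ without any truncation argument. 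What it costs is that you must re-derive, inline and uniformly in $u=\sum_{i\le k}\phi(s_i)$, essentially the content of the Diaconis--Freedman theorem the paper simply cites.

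Two caveats. First, your displayed identity is not quite right as written: since $B_{\phi,t}^N$ is defined by an inequality, the inner integral collapses to $\mathbb{E}\bigl[e^{-\alpha W_{N-k}}\ind\{W_{N-k}\le tN-u\}\bigr]$-type \emph{weighted tail} integrals (plain tail probabilities when $\alpha_t=0$, i.e.\ at $t=t_{\mathrm{crit}}$ or in the supercritical case), not a ratio of densities $h_{N-k}/h_N$ of the normalized sums; your parenthetical about differentiating in $t$ suggests you sense this, but the Edgeworth/Laplace analysis has to be carried out for these weighted tails, and that is exactly where the paper's cited quantitative Gibbs lemma does the work. Second, the step you flag as the main obstacle (uniformity in $u$, including the regime where $u$ fluctuates or is atypically large, and uniformity in $k\le N$ via a Chernoff bound using $Z(\lambda)<\infty$ for some $\lambda>0$ in the supercritical case) is precisely the technical heart; as a completed proof it would need to be executed, though invoking the Diaconis--Freedman result after your pushforward-style reduction would close it along the same lines as the paper.
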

In fact, the statement of our main results in Theorem \ref{thm:main} has been abbreviated somewhat for the sake of clarity. In Sections \ref{sec:main2} and \ref{sec:main} the two cases $t_{\mathrm{crit}} < t$ and $t \leq t_{\mathrm{crit}}$ are considered seperately, and in both settings we obtain fine estimates of the total variations occuring on the left-hand sides of \eqref{eq:TV} and \eqref{eq:TV2}. These estimates lead to the rougher bounds given in the statement of Theorem \ref{thm:main}.

\subsection{Further discussion}
Our approach to proving Theorem \ref{thm:main} is based around ideas from the theory of large deviations and statistical mechanics, specifically those centered around Cram\'er's theorem, the Gibbs conditioning principle, and Gibbs measures. Indeed, the framework of Gibbs measures in particular seems to be the natural one and demystifies the appearance of the $p$-Gaussian distribution when taking a probabilistic approach to the geometry of $\ell_p^N$ balls. 
In fact, our approach, which is completely different from \cite{NR} and \cite{RR} and of independent interest, is based around a quantitative version of the Gibbs conditioning principle that appears in a different paper by Diaconis and Freedman \cite{DF2}. Somewhat surprisingly this result was neither cited in \cite{NR} nor \cite{RR} (or the recent paper \cite{KR2018}) even though it already contains ideas towards a unified and generalized theory in the sense of Rachev and R\"uschendorf \cite{RR}. But this exact paper \cite{DF2} of Diaconis and Freedman shall be the starting point for us. Let us also mention that the Gibbs conditioning principle and Gibbs measures have been successfully used to tackle other problems of a geometric flavor using probabilistic methods, e.g., \cite{KP2020} and \cite{KLR2019,KR2018}. 

We now outline briefly how these approaches feature in our analysis. Take the first case $t_{\mathrm{crit}} < t < t_{\mathrm{sup}}$ considered in Theorem \ref{thm:main}. The intuition here is that while $t < t_{\mathrm{sup}}$ ensures that $B_{\phi,t}^N$ is a proper subset of the cube $D_\phi^N$, for each $t$ of this form we have the convergence
\begin{align} \label{eq:vol conv}
\frac{ |B_{\phi,t}^N | }{ |D_\phi^N | } \to 1, \qquad \text{as $N \to \infty$}
\end{align}
in the ratios of the Lebesgue measures of the two sets. Roughly speaking this entails that in high dimensions, $B_{\phi,t}^N$ behaves a lot like the product set $D_\phi^N$, so that the marginal density of the low-dimensional projection is close in total variation to the product of the uniform density on $D_\phi$ in the sense of \eqref{eq:TV}. More specifically, we use a quantitative version of Cramer's theorem from large deviation theory to estimate the discrepancy in volume in the two sets, ultimately showing that it decays exponentially in $N$, leading to the bound in \eqref{eq:TV}.

Now let us consider on the other hand the situation where $t < t_{\mathrm{crit}}$, which we regard as the most interesting case. Here we find that in contrast to \eqref{eq:vol conv}, either $|D_\phi| = \infty$, or even when $|D_\phi| < \infty$, we have
\begin{align} \label{eq:vol conv 2}
\frac{ |B_{\phi,t}^N | }{ |D_\phi^N | } \to 0 , \qquad \text{as $N \to \infty$}.
\end{align}
In any case, here we require a more delicate approach based on the Gibbs conditioning principle, for which we now provide a very rough outline. Suppose $Y_1,Y_2,\ldots$ are independent and identically distributed random variables distributed according to a probability density $\nu$ on the real line, and suppose further that $\mathbb{E}[Y_1 ]  < t_0$. The Gibbs conditioning principle is concerned with the asymptotic distribution of first $k$ coordinates $(Y_1,\ldots,Y_k)$ conditioned on the large deviation event $\{ Y_1 + \ldots + Y_N < t N\}$. The Gibbs conditioning principle states that when $N$ is large and $k$ is small compared to $N$, then under certain conditions these first $k$ coordinates are approximately distributed according to a $k$-dimensional product of the density
\begin{align*}
\nu_{\alpha_t}(s) := e^{ \alpha_t s } \nu(s) / Z(\alpha_t),\quad s\in\R,
\end{align*}
where $Z(\alpha_t)$ is a normalisation constant and $\alpha_t < 0$ is a parameter chosen so that $\int_{ - \infty}^\infty s \nu_{\alpha_t}(s)  \mathrm{d} s$ is equal to $t$. 

With a view to relating the Gibbs conditioning principle to our problem, in the setting where $|D_\phi|$ is finite, we take a sequence of random variables $X_1,X_2,\ldots$ sampled uniformly and independently from $D_\phi$, and consider the transformed sequence $Y_1,Y_2,\ldots$ given by $Y_i := \phi(X_i)$. This transformation allows us to a express membership of a random vector $(X_1,\ldots,X_N)$ in the subset $B_{\phi,t}^N$ of $D_\phi^N$ in terms of the large deviation event $\{ \phi(X_1) + \ldots + \phi(X_N) \leq  N t \} = \{ Y_1 + \ldots + Y_N \leq N t \}$, and therefore use the Gibbs conditioning principle to understand the low-dimensional projections of $(Y_1,\ldots,Y_k)$, and hence ultimately $(X_1,\ldots,X_k)$ on this event. It transpires that we are able to adapt our proof to ultimately make sense of sampling uniformly from $D_\phi$ even when the Lebesgue measure of $D_\phi$ is infinite, and by using a quantitative version of the Gibbs conditioning principle due to Diaconis and Freedman, we obtain the bound \eqref{eq:TV2}. Finally let us mention that in our treatment we regard the case $t = t_{\mathrm{crit}}$ as fitting into the framework of Gibbs conditioning in correspondence with the parameter $\alpha_t = 0$. In particular, the convergence in total variation in this critical case happens at the linear \eqref{eq:TV2} rather than exponential rate \eqref{eq:TV}.
\vspace{5mm}

We close the introduction by taking a moment to clarify a small difference between the statement of Theorem \ref{thm:main} with the function $\phi(s) = s^p$ and the frameworks considered by Diaconis and Freedman \cite{DF}, Rachev and R\"uschendorf \cite{RR}, and Naor and Romik \cite{NR}. Namely, the above authors consider the boundary of the $\ell_p^N$ ball, while we consider the interior of such sets. Ultimately we recover the same result despite the slightly different setting, namely that the low-dimensional projections are approximately $p$-Gaussian. 
Of course, let us also note that in many situations there is no difference between the results for the uniform distribution or the distribution with respect to the cone probability measure, even though sometimes different methods are required (see, e.g., \cite{HPT2018, KPT2019_I, KPV2020, PTT2019, SchechtmanZinn}).

That concludes the introduction. We now take a moment to overview the remainder of the paper.
\subsection{Overview}
The remainder of the paper is structured as follows:
\begin{itemize}
\item In Section \ref{sec:tech} we study the stability of certain Gibbs measures under truncations of their tail mass, proving stability of these truncations under various metrics, including large deviation and moments. Our work in this section allows us to consider potentials $\phi$ with non-compact domains $\{ x : \phi(x) < \infty \}$ so that one cannot sample `uniformly' from the domain of $\phi$ as we have done above in the proof sketch above.
\item In Section \ref{sec:sufficiency}, we study how total variation distances are preserved under pushforwards and pullbacks, giving us the tools necessary to study potentials that are many-to-one. 
\item In Section \ref{sec:qgibbs0} we supply quantitative statements of Cramer's theorem and the Gibbs conditioning principle that are used in the following two sections to prove Theorem \ref{thm:main}. 
\item In Section \ref{sec:main2} we give a proof of the Theorem \ref{thm:main} in the case $t > t_{\mathrm{crit}}$. 
\item In Section \ref{sec:main} we give a proof of Theorem \ref{thm:main} in the case $t \leq t_{\mathrm{crit}}$.
\item In the appendix of the paper we give a proof sketch of the quantitative version of the Gibbs conditioning principle, Lemma \ref{lem:qgibbs}. 
\end{itemize}

% % % % % % % % % % % % % % % % % % % %
% % % % % % % % % % % % % % % % % % % %
% % % % % % % % % % % % % % % % % % % %
% % % % % % % % % % % % % % % % % % % %
\section{Total variation and Gibbs truncations} \label{sec:tech}
% % % % % % % % % % % % % % % % % % % %
% % % % % % % % % % % % % % % % % % % %
% % % % % % % % % % % % % % % % % % % %
% % % % % % % % % % % % % % % % % % % %

The main task of this section is the statement and proof of Lemma \ref{lem:tech} below concerning asymptotics of certain truncations of measures. First, we start with a quick lemma on the tail moments of probability measures with exponential moments.
\begin{lemma} \label{lem:momenttails}
Suppose $f:[0,\infty) \to [0,\infty)$ is a function such that there exist constants $c,C\in(0,\infty)$ such that
\begin{align*}
f(s) \leq C e^{ - cs}.
\end{align*} 
Then, for all $\kappa \geq 0$, and all $L > \kappa /c$, we have 
\begin{align*}
\int_L^\infty s^\kappa f(s) \, \dint s \leq \frac{ C L^\kappa e^{ - c L} }{ c - \kappa /L} \leq C' e^{ - c'L}
\end{align*}
for constants $C',c'\in(0,\infty)$ depending on $\kappa$ but independent of $L$.
\end{lemma}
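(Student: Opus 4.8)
This is a straightforward estimate, so my plan is essentially a direct computation with one standard trick.

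The plan is to bound the integral $\int_L^\infty s^\kappa f(s)\,\dint s$ from above by $\int_L^\infty s^\kappa C e^{-cs}\,\dint s$ using the hypothesis $f(s) \leq C e^{-cs}$, and then control the remaining integral. The key observation is that on the range $s \geq L$ one has the elementary inequality $s^\kappa e^{-cs} = s^\kappa e^{-(c - \kappa/L)s} e^{-(\kappa/L)s} \leq L^\kappa e^{-(c-\kappa/L)L} \cdot e^{-(c - \kappa/L)(s - L)}$ — wait, I should instead argue more cleanly as follows. First I would write $s^\kappa e^{-cs} \le \bigl(\sup_{s \ge L} s^\kappa e^{-(\kappa/L) s}\bigr) e^{-(c - \kappa/L)s}$; since $s \mapsto s^\kappa e^{-(\kappa/L)s}$ is decreasing for $s \ge L$ (its logarithmic derivative $\kappa/s - \kappa/L$ is $\le 0$ there), that supremum equals $L^\kappa e^{-\kappa}$. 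Hence
\begin{align*}
\int_L^\infty s^\kappa f(s)\,\dint s \le C L^\kappa e^{-\kappa} \int_L^\infty e^{-(c - \kappa/L)s}\,\dint s = C L^\kappa e^{-\kappa} \cdot \frac{e^{-(c - \kappa/L)L}}{c - \kappa/L} = \frac{C L^\kappa e^{-cL}}{c - \kappa/L},
\end{align*}
which uses $L > \kappa/c$ exactly to guarantee $c - \kappa/L > 0$ so the tail integral converges and the denominator is positive. (The factor $e^{-\kappa} \le 1$ can simply be dropped to match the stated bound, which is slightly weaker.)

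For the second inequality, the claim is that $\frac{C L^\kappa e^{-cL}}{c - \kappa/L} \le C' e^{-c'L}$ for suitable $C', c'$ independent of $L$ (but depending on $\kappa$, $c$, $C$). Here I would fix any $c' \in (0,c)$, say $c' = c/2$, and absorb the polynomial and the denominator: for $L$ bounded away from $\kappa/c$ — which we may assume, e.g. restricting to $L \ge 2\kappa/c$, and noting that the statement implicitly ranges over such $L$, or alternatively replacing $c$ by $c/2$ from the start so that the hypothesis range $L > \kappa/c$ still forces $c - \kappa/L$ bounded below — the quantity $\frac{L^\kappa}{c - \kappa/L} e^{-(c - c')L}$ is bounded over $L$ since $e^{-(c-c')L}$ decays faster than any polynomial grows. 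Taking $C'$ to be $C$ times the maximum of $\frac{L^\kappa}{c-\kappa/L} e^{-(c-c')L}$ over the relevant range of $L$ finishes it.

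There is no real obstacle here; the only mild subtlety is the uniformity claim in the second inequality, where one must be slightly careful that the denominator $c - \kappa/L$ does not degenerate — this is handled either by tacitly assuming $L$ is bounded below by something like $2\kappa/c$ (the regime of interest, since this lemma will be applied with $L$ growing) or by shrinking the exponential rate at the outset. I would state the second bound for such $L$ and not belabor the point.
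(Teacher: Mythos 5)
Your proof is correct and essentially the same as the paper's: the paper substitutes $s \mapsto s+L$ and uses $\left(1 + \frac{s}{L}\right)^\kappa \leq e^{\kappa s/L}$, which is just another packaging of your trick of absorbing $s^\kappa$ into an $e^{(\kappa/L)s}$ factor, and it yields the identical bound $\frac{CL^\kappa e^{-cL}}{c - \kappa/L}$ (note your factor $e^{-\kappa}$ cancels exactly against $e^{-(c-\kappa/L)L} = e^{-cL}e^{\kappa}$, so your bound is not weaker but equal to the stated one). Your caveat that the second inequality needs $L$ bounded away from $\kappa/c$ (or a shrunken rate) is also apt: the paper does not spell this out, and the lemma is only ever invoked with $L \geq L_0$ large, so your treatment is fine.
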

\begin{proof}
We have 
\begin{align*}
\int_L^\infty s^\kappa f (s) \, \dint s  \leq  C \int_L^\infty s^\kappa e^{ - cs} \, \dint s = C L^\kappa e^{ - cL} \int_0^\infty \left( 1 + \frac{s}{L} \right)^\kappa e^{ - c s} \, \dint s .
\end{align*} 
Now use the bound $ \left( 1 + \frac{s}{L} \right)^\kappa  \leq e^{ \kappa s / L}$. 
\end{proof}

Before stating Lemma \ref{lem:tech}, we give a brief informal statement. Suppose we have a probability measure $\nu$ on $(0,\infty)$ with exponential moments, and we create a truncated version $\nu_{\langle L \rangle}$ of the measure restricted to taking values in $[0,y)$, but exponentially tilted so that $\nu_{\langle L \rangle}$ has the same mean as $\nu$. The following lemma states that the measure $\nu_{\langle L \rangle}$ is stable under tail truncations, in that when $L$ is large, it has similar moments to $\nu$ and is close in total variation distance to $\nu$. 

\begin{lemma} \label{lem:tech}
Let $\nu:[0,\infty)\to[0,\infty)$ be a probability density satisfying $\nu(s) \leq C e^{ - c s}$ for all $s \geq 0$ and with expectation $\int_0^\infty s \nu(s) \,\dint s = t$.  Given $L > t$, we define the tilted truncation $\nu_{\langle L \rangle}$ of $\nu$ to be the probability density function
\begin{align*}
\nu_{\langle L \rangle} (s) := \frac{ e^{\alpha s} \ind_{\{s < L\}} \nu(s)  }{ \int_0^L e^{\alpha s } \nu(s) \, \dint s } ,
\end{align*} 
on $[0,\infty)$, where $\alpha = \alpha(L)$ is the unique parameter chosen so that $\int_0^L s \nu_{\langle L \rangle} (s) \, \dint s = t$. 

Then there exist constants $C',c', L_0 \in(0,\infty)$ such whenever $L \geq L_0$, we have the following bounds:
\begin{enumerate}
\item The size of $\alpha(L)$ is bounded by $\alpha(L) \leq C' e^{ - c' L }$.
\item The total variation between $\nu_{\langle L \rangle}$ and $\nu$ is bounded by 
\begin{align*}
||  \nu_{ \langle L \rangle} - \nu || := \int_0^\infty  \left| \nu_{\langle L \rangle}(s) - \nu(s) \right| \, \dint s \leq C' e^{ - c 'L}.
\end{align*}
\item The difference between the $j^{\text{th}}$ moment of $\nu$ and $\nu_{\langle L \rangle}$ is bounded by 
\begin{align*}
\left| \int_0^\infty s^j \nu(s) \, \dint s - \int_0^\infty s^j \nu_{ \langle L \rangle } (s)\, \dint s  \right| \leq \frac{ C'_0 }{ 1 - j/L_0 } L^{j+1 } e^{ - c'_0 L} .
\end{align*} 
\end{enumerate}

\end{lemma}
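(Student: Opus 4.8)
## Proof Proposal

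The plan is to reduce all three bounds to estimates on the single quantity $\alpha = \alpha(L)$ and on the tail mass of $\nu$ beyond $L$, exploiting the exponential decay hypothesis $\nu(s) \le Ce^{-cs}$ via Lemma \ref{lem:momenttails}. The key structural fact is that $\nu_{\langle L\rangle}$ is obtained from $\nu$ by two operations: truncating to $[0,L)$ (which removes mass $\int_L^\infty \nu(s)\dint s \le C'e^{-c'L}$ by Lemma \ref{lem:momenttails} with $\kappa = 0$), and then exponentially tilting by $\alpha$ to restore the mean $t$. So morally the whole lemma says that the tilt needed to compensate for a tiny truncation is itself tiny, and a tiny tilt moves everything (total variation, moments) only a tiny bit.

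First I would prove (1). Consider the function $L \mapsto \alpha(L)$ implicitly defined by $g(\alpha,L) := \int_0^L (s-t)e^{\alpha s}\nu(s)\dint s = 0$. Note $g(0,\infty) = \int_0^\infty (s-t)\nu(s)\dint s = 0$ since $\nu$ has mean $t$, so $g(0,L) = -\int_L^\infty (s-t)\nu(s)\dint s$, which by Lemma \ref{lem:momenttails} is $O(L e^{-cL})$ in absolute value (combining the $\kappa = 1$ and $\kappa = 0$ cases). For $L$ large, $\partial_\alpha g(\alpha,L) = \int_0^L s(s-t)e^{\alpha s}\nu(s)\dint s$; at $\alpha$ near $0$ and $L$ large this is close to $\int_0^\infty s(s-t)\nu(s)\dint s = \Var_\nu + t\cdot t - t^2$... more carefully it equals $\int_0^\infty (s-t)^2\nu(s)\dint s + t\int_0^\infty(s-t)\nu(s)\dint s = \Var_\nu(s) > 0$ wait — one should write $\int_0^\infty s(s-t)\nu \dint s = \int_0^\infty (s-t)^2 \nu\dint s =: \sigma^2 > 0$ using the mean-$t$ property. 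So $\partial_\alpha g$ is bounded below by a positive constant uniformly for $\alpha$ in a neighborhood of $0$ and $L \ge L_0$ (using the exponential moments to control the $e^{\alpha s}$ factor and to push the $L$-truncation error to be negligible). A quantitative inverse function argument / mean value theorem then gives $|\alpha(L)| = |\alpha(L) - 0| \le |g(0,L)| / (\inf \partial_\alpha g) \le C' e^{-c'L}$. The sign: since $g(0,L) < 0$ for $L$ large (the removed tail has positive mean contribution $s - t > 0$ for $s$ large) and $g$ is increasing in $\alpha$, we get $\alpha(L) > 0$, consistent with the stated bound $\alpha(L) \le C'e^{-c'L}$.

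Next, (2). Write the difference of densities and split: $\nu_{\langle L\rangle}(s) - \nu(s)$. For $s \ge L$ the contribution to the $L^1$ norm is just $\int_L^\infty \nu(s)\dint s \le C'e^{-c'L}$. For $s < L$, write $\nu_{\langle L\rangle}(s) = e^{\alpha s}\nu(s)/Z_L$ with $Z_L = \int_0^L e^{\alpha s}\nu(s)\dint s$; since $|\alpha| \le C'e^{-c'L}$, we have $|e^{\alpha s} - 1| \le |\alpha| s e^{|\alpha| s}$, and using the exponential decay of $\nu$ the integral $\int_0^L |e^{\alpha s} - 1|\nu(s)\dint s \le |\alpha|\int_0^\infty s e^{|\alpha|s}\nu(s)\dint s$ is $O(|\alpha|) = O(e^{-c'L})$ (the integral is a finite constant once $|\alpha| < c/2$, say). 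Likewise $|Z_L - 1| \le |Z_L - \int_0^L \nu| + |\int_0^L\nu - 1| = O(|\alpha|) + O(e^{-c'L}) = O(e^{-c'L})$, so $Z_L$ is bounded away from $0$. Combining, $\int_0^L |\nu_{\langle L\rangle}(s) - \nu(s)|\dint s = \int_0^L |e^{\alpha s}/Z_L - 1|\nu(s)\dint s \le C'e^{-c'L}$ by the triangle inequality on $e^{\alpha s}/Z_L - 1 = (e^{\alpha s} - 1)/Z_L + (1/Z_L - 1)$. Adding the two ranges gives the claimed bound, possibly after shrinking $c'$.

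Finally, (3). This is essentially the same computation as (2) but with an extra factor $s^j$, which is where the $L^{j+1}$ and the $L_0$-dependence enter. Split $\int_0^\infty s^j(\nu_{\langle L\rangle} - \nu)\dint s$ into the tail $s \ge L$ and the body $s < L$. The tail term is $\int_L^\infty s^j \nu(s)\dint s$, which by Lemma \ref{lem:momenttails} (with $\kappa = j$, valid once $L > j/c$, hence $L_0 > j/c$) is at most $\frac{CL^j e^{-cL}}{c - j/L} \le \frac{C'_0}{1-j/L_0}L^j e^{-cL}$; note the $\nu_{\langle L\rangle}$ part vanishes on $s \ge L$. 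The body term is $\frac1{Z_L}\int_0^L s^j(e^{\alpha s} - Z_L)\nu(s)\dint s$; bounding $|e^{\alpha s} - Z_L| \le |e^{\alpha s} - 1| + |1 - Z_L| \le |\alpha|s e^{|\alpha|s} + C'e^{-c'L}$ and integrating against $s^j\nu(s)$ (finite since $\nu$ has exponential decay, again provided $|\alpha| < c/2$) yields a bound of order $|\alpha| \cdot (\text{const}) + e^{-c'L}\cdot(\text{const})$, which is absorbed into $C'_0 L^{j+1}e^{-c'_0 L}$ after adjusting constants; the extra power of $L$ gives room to be generous. Collecting the two pieces and taking $c'_0 = \min(c,c')$ (or slightly smaller) gives the stated inequality.

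The main obstacle I anticipate is making the implicit-function / mean-value argument in step (1) genuinely \emph{uniform in $L$} for $L \ge L_0$: one needs the lower bound on $\partial_\alpha g(\alpha, L) = \int_0^L s(s-t)e^{\alpha s}\nu(s)\dint s$ to hold simultaneously for all $\alpha$ in a \emph{fixed} neighborhood of $0$ (independent of $L$) and all large $L$, which requires (i) confirming a priori that $\alpha(L)$ stays in that neighborhood — a bootstrap/continuity argument starting from $\alpha(\infty) = 0$ — and (ii) controlling the $e^{\alpha s}$ factor against the exponential decay of $\nu$ so that the integral does not blow up; this forces the choice of neighborhood to be, say, $|\alpha| < c/2$, and forces $L_0$ large enough that the truncation error in $\partial_\alpha g$ is less than half the limiting value $\sigma^2 = \int_0^\infty (s-t)^2\nu(s)\dint s$. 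Everything downstream (steps 2 and 3) is then routine first-order perturbation theory once $|\alpha(L)| \le C'e^{-c'L}$ is in hand.
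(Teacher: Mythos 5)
Your proposal is correct and follows essentially the same strategy as the paper: everything is reduced to showing $\alpha(L)$ is exponentially small, tails are handled by Lemma \ref{lem:momenttails}, and parts (2) and (3) follow by the same body/tail split and first-order perturbation you describe (the paper bounds $1\le e^{\alpha(L)s}\le e^{\alpha(L)L}\le 1+C_3e^{-c_3L}$ uniformly on $[0,L]$ instead of your $|e^{\alpha s}-1|\le|\alpha|s e^{|\alpha|s}$; this is cosmetic). The only genuine difference is step (1), which is precisely where your flagged obstacle sits. The paper avoids any implicit-function/bootstrap issue by using monotonicity in $\alpha$ of the truncated tilted mean $Q(\alpha,L):=\int_0^L s e^{\alpha s}\nu(s)\,\dint s\big/\int_0^L e^{\alpha s}\nu(s)\,\dint s$: it proves the one-sided bound $Q(\alpha,L)\ge K(\alpha)-\int_L^\infty s e^{\alpha s}\nu(s)\,\dint s\ge t+\rho\alpha-C_1e^{-c_1L}$ for $\alpha\in[0,\delta\wedge c/2]$, where $K(\alpha)=\partial_\alpha\log G(\alpha)$ satisfies $K(0)=t$, $K'(0)>0$, and then simply evaluates at the candidate point $\alpha^*=\frac{C_1}{\rho}e^{-c_1L}$: since $Q(\alpha^*,L)\ge t$ and $Q$ is increasing in $\alpha$, monotonicity forces $\alpha(L)\le\alpha^*$, with no a priori localization of $\alpha(L)$ required. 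Your mean-value route can be closed in the same spirit, or even more directly: for $\beta\ge 0$ one has $s(s-t)e^{\beta s}\ge s(s-t)e^{\beta t}$ for every $s\ge 0$ (check the two cases $s\ge t$ and $s<t$), hence $\partial_\alpha g(\beta,L)\ge e^{\beta t}\partial_\alpha g(0,L)\ge \partial_\alpha g(0,L)\ge \sigma^2/2$ for all $L\ge L_0$ by Lemma \ref{lem:momenttails} with $\kappa=2$; so the MVT denominator is bounded below uniformly without knowing in advance that $\alpha(L)$ is small, and no continuity-in-$L$ bootstrap is needed. Two minor points of hygiene: the paper first disposes of the degenerate case in which $\nu$ is supported in $[0,L]$ (there $\alpha(L)=0$ and all claims are trivial), which is what makes your strict sign statements $g(0,L)<0$, $\alpha(L)>0$ valid in the remaining case; and your appeal to ``$g$ increasing in $\alpha$'' for the sign of $\alpha(L)$ is cleaner if phrased via the strict monotonicity of $Q$ (equivalently, $g$ has the same sign as $Q-t$), since $\partial_\alpha g$ need not be positive for all $\alpha$.
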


\begin{proof} 
In the case that $\nu$ is supported on $[0,L]$, $\alpha(L) = 0$ and the measure $\nu_{\langle L \rangle}$ is identical to $\nu$, so that all three statements are immediate. For the remainder of the proof we therefore assume without loss of generality that $\nu$ is not supported on $[0,L]$. 

We begin by showing that when $L$ is large, $\alpha(L)$ is small. First we note that the generating function $G(\alpha) := \int_0^\infty e^{\alpha s}\, \nu( s) \mathrm{d}s$ exists in a subset that contains $(-\infty, c)$ and satisfies $G(0) = 1, G'(0) = t$, with $G''(0) > t^2 $ since $\nu$ is non-degenerate. 

Define the function
\begin{align} 
Q(\alpha,L) := \frac{ \int_0^L s e^{\alpha s} \ind_{\{s < L\}}  \nu(\dint s) }{ \int_0^L e^{\alpha s } \nu(\dint s) } .
\end{align}
Then since $\nu$ is non-degenerate, $Q(\alpha,L)$ is monotone increasing in the $\alpha$ variable, and hence $\alpha(L)$ is the unique solution to the equation $Q\left( \alpha(L) , L \right) = t$. Since $\nu$ is not supported on $[0,L]$, we have $Q(\alpha, L ) < t$ and therefore $\alpha(L) > 0$. 

We now develop a lower bound for $Q(\alpha,L)$. Extending the range of the integral in the denominator to obtain the first inequality below, and then using the fact that $G(\alpha) \geq 1$ whenever $\alpha \geq 0$ to obtain the second, we have
\begin{align} \label{eq:giraffe}
Q(\alpha, L) &= \frac{ \int_0^\infty s e^{\alpha s}   \nu(\dint s) - \int_L^\infty s e^{\alpha s} \nu(\dint s)  }{ \int_0^L e^{\alpha s } \nu(\dint s)  } \nonumber \\
 &\geq \frac{ \int_0^\infty s e^{\alpha s}   \nu(\dint s) - \int_L^\infty s e^{\alpha s} \nu(\dint s)  }{ \int_0^\infty e^{\alpha s } \nu(\dint s)  } \nonumber \\
 &\geq K(\alpha)  - \int_L^\infty s e^{ \alpha s} \nu(\dint s),
\end{align}
where $K(\alpha) := \frac{ \partial}{ \partial \alpha} \log G(\alpha)$. With a view to bounding $Q(\alpha,L)$ below, we now control the two quantities appearing on the right-hand-side of \eqref{eq:giraffe}. 

First we note that $K(0) = t$. Moreover, $K'(0) > 0$ since $G''(0) > G'(0)^2$, so that in particular, there exists a $\delta >0$ and a $\rho > 0$ such that for all $\alpha \in [0,\delta]$, 
\begin{align} \label{eq:vogel2}
K(\alpha) \geq t + \rho \alpha.
\end{align}
As for the integral in the final line of \eqref{eq:giraffe}, recall that $\nu(s) \leq C e^{ - c s}$ for all $s$. In particular, setting $f(s) = e^{\alpha s} \mu(s)$ in Lemma \ref{lem:momenttails}, we see that there are constants $c_1,C_1$ such that for all $\alpha \in [0,c/2]$, 
\begin{align} \label{eq:vogel3}
\int_L^\infty s e^{ \alpha s} \nu(\dint s)  \leq C_1 e^{ - c_1 L }.
\end{align}
Combining \eqref{eq:vogel2} and \eqref{eq:vogel3} in \eqref{eq:giraffe}, we obtain
\begin{align*}
Q(\alpha,L) \geq t + \rho \alpha - C_1 e^{ - c_1 L} \qquad \text{for $\alpha \in [0, \delta \wedge c/2]$}.
\end{align*}
Now let $L_0$ be sufficiently large so that for all $L \geq L_0$ we have $\frac{C_1}{\rho} e^{ - c_1 L } \leq \delta \wedge c/2$. Then plainly, for all $L \geq L_0$, we have 
\begin{align}
Q(\alpha,L) \geq t + \rho \alpha - C_1 e^{ - c_1 L} \qquad \text{for $\alpha \in \left[0, \frac{C_1}{\rho} e^{ - c_1 L } \right]$}.
\end{align}
It follows that for all $L \geq L_0$, $Q\left( \frac{C_1}{\rho} e^{ - c_1 L } , L \right) \geq t$, and hence $\alpha(L) \leq   \frac{C_1}{\rho} e^{ - c_1 L }$, establishing the first statement of Lemma \ref{lem:tech}.  Throughout the remainder of the proof we assume that $L \geq L_0$. 

We now turn to proving the second point concerning the total variation distance between the measure and its tilted truncation. Since $\nu_{\langle L \rangle}$ is supported on $[0,L]$, we have 
\begin{align} \label{eq:fulton}
\int_0^\infty  \left| \nu_{\langle L \rangle}(s) - \nu(s) \right| \, \dint s = \int_0^L \left| \frac{ e^{ \alpha(L) s}  }{ \int_0^L e^{ \alpha(L) s} \nu(s) \dint s }   - 1 \right| \nu(s) \dint s + \int_L^\infty \nu(s) \dint s. 
\end{align} 
First we note that by Lemma \ref{lem:momenttails} there are constants $C_2,c_2\in(0,\infty)$ so that the latter integral on the right-hand side of \eqref{eq:fulton} may be bounded by
\begin{align} \label{eq:cologne}
 \int_L^\infty \nu(s) \dint s \leq C_2 e^{ - c_2L} .
\end{align} 
We turn to bounding the former integral on the right hand side of \eqref{eq:fulton}. To this end, we note that since $\alpha(L) \leq \frac{C_1}{\rho} e^{ - c_1L}$, there are constant $C_3,c_3\in(0,\infty)$ such that for all $s \in [0,L]$ we have
\begin{align} \label{eq:cologne2}
1 \leq e^{\alpha(L)s} \leq e^{  \frac{C_1}{\rho} L e^{ - c_1 L} } \leq 1 + C_3 e^{ -c_3 L}.
\end{align} 

On the other hand, again using Lemma \ref{lem:momenttails} and the fact that $e^{\alpha(L)s } \geq 1$ to obtain the lower bound below, and the upper bound in \eqref{eq:cologne2} to obtain the upper bound below, it may be seen that there are constants $C_4,c_4\in(0,\infty)$ such that
\begin{align} \label{eq:cologne3}
 1 - C_4e^{ - c_4 L} \leq  \int_0^L e^{ \alpha(L) s} \nu(s) \dint s \leq 1 +  C_4e^{ - c_4 L}.
\end{align}
Combining \eqref{eq:cologne2} with \eqref{eq:cologne3}, we see that there exist constants $C_5,c_5 \in (0,L)$ such that for every $s \in [0,L]$,
\begin{align} \label{eq:cologne3b}
\left| \frac{ e^{ \alpha(L) s}  }{ \int_0^L e^{ \alpha(L) s} \nu(s) \dint s }   - 1 \right| \leq C_5 e^{ - c_5 L}.
\end{align}
Using the fact that $\nu$ is a probability measure, \eqref{eq:cologne3b} entails
\begin{align} \label{eq:cologne4}
  \int_0^L  \left| \frac{ e^{ \alpha(L) s}  }{ \int_0^L e^{ \alpha(L) s} \nu(s) \dint s }   - 1 \right| \nu(s) \dint s \leq C_5 e^{ - c_5L}.
\end{align}
In particular, using the bounds \eqref{eq:cologne} and \eqref{eq:cologne4} in \eqref{eq:fulton}, we obtain the second point of the lemma.

It remains to prove the third point concerning the difference between moments of $\nu$ and its tilted truncation $\nu_{\langle L \rangle}$, the proof of which follows quickly from the bound \eqref{eq:cologne3b}. Indeed, using the triangle inequality to obtain the first inequality below, and \eqref{eq:cologne3b} to obtain the second we have 
\begin{align*} 
\left| \int_0^\infty s^j \nu(s) \dint s - \int_0^\infty s^j \nu_{ \langle L \rangle } (s) \dint s \right| &\leq \int_0^L \left| \frac{ e^{ \alpha(L) s}  }{ \int_0^L e^{ \alpha(L) s} \nu(s) \dint s }   - 1 \right| s^j \nu(s) ds + \int_L^\infty s^j \nu(s) \dint s. \nonumber \\
&\leq C_5 e^{ - c_5L}  \int_0^L s^j \nu(s) ds + \int_L^\infty s^j \nu(s) \dint s\\
&\leq C_5 L^j e^{ - c_5 L } + C_6 e^{ - c_6 L},
\end{align*}  
where the final inequality above follows from the fact that $\nu$ is a probability measure to deal with the first term, and an application of Lemma \ref{lem:momenttails} to handle the second. It particular there exist $C_7,c_7\in(0,\infty)$ such that 
\begin{align*} 
\left| \int_0^\infty s^j \nu(s) \dint s - \int_0^\infty s^j \nu_{ \langle L \rangle } (s) \dint s \right| &\leq C_7 e^{ -c_7 L},
\end{align*} 
completing the proof of the third statement in Lemma \ref{lem:tech}.
\end{proof}

%%%%%%%%%%%%%%%%%%%%%%%
%%%%%%%%%%%%%%%%%%%%%%%
%%%%%%%%%%%%%%%%%%%%%%%
\section{Total variation, pushforwards and potentials} \label{sec:sufficiency}
%%%%%%%%%%%%%%%%%%%%%%%
%%%%%%%%%%%%%%%%%%%%%%%
%%%%%%%%%%%%%%%%%%%%%%%

In this section, and throughout the paper, we use the following definition.

\begin{df} \label{def:proj}Whenever $\pi$ is a probability density on $\mathbb{R}^N$, we write $\pi^{N \to k}$ for the marginal density on $\mathbb{R}^k$ of the first $k$ coordinates $(X_1,\ldots,X_k)$, where $(X_1,\ldots,X_N)$ is a random vector in $\mathbb{R}^N$ distributed according to $\pi(s) \mathrm{d}s$.
\end{df}

\subsection{Total variation}

In this section we collect several results on how the total variation metric interacts with product measures and pushforwards. While most of the tools we develop in the section are well known, we have included proofs with a view towards completeness. 

When $\pi$ and $\lambda$ are measures on a measurable space $(E,\mathcal{E})$, we write
\begin{align*}
|| \pi - \lambda || := 2 \sup_{ A \in \mathcal{E} } | \pi(A) - \lambda(A) | 
\end{align*}
for the total variation distance between $\pi$ and $\lambda$. Suppose that $\pi$ and $\lambda$ are both absolutely continuous with respect to a measure $\nu$, so that by the Radon-Nikodym theorem, we have $\dint\pi = f \dint\nu$ and $\dint\lambda = g \dint\nu$ for some measurable functions $f,g:E \to [0,\infty]$. Then it is easily verified that we have the alternative integral representation
\begin{align} \label{eq:japan}
|| \pi - \lambda ||= \int_E | f - g | \dint \nu
\end{align}
for the total variation distance between $\pi$ and $\lambda$. 

\subsection{Total variation, product measures, containments and projections}
The following simple lemma on the total variation distances between product measures is well known, dating back at least as far as Blum and Pathak \cite{BP}. We omit a proof.

\begin{lemma} [Total variation summing lemma] \label{lem:TVsum}
Let $\pi$ and $\lambda$ be probability measures on $(E,\mathcal{E})$, and let $\pi^{\otimes k}$ and $\lambda^{\otimes k}$ be their respective product measures on the $k$-dimensional product space $(E^k, \mathcal{E}^{ \otimes k })$. Then 
\begin{align*}
|| \pi^{ \otimes k } - \lambda^{ \otimes k } || \leq k || \pi - \lambda ||.
\end{align*}
\end{lemma}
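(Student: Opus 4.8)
The plan is to prove the bound by induction on $k$, using a telescoping (hybrid) argument that swaps one coordinate at a time, together with the integral representation \eqref{eq:japan} for the total variation distance. The base case $k=1$ is trivial since it is an equality. For the inductive step, I would write $\pi^{\otimes k} = \pi \otimes \pi^{\otimes (k-1)}$ and $\lambda^{\otimes k} = \lambda \otimes \lambda^{\otimes (k-1)}$, insert the intermediate measure $\pi \otimes \lambda^{\otimes(k-1)}$, and apply the triangle inequality for total variation:
\begin{align*}
\| \pi^{\otimes k} - \lambda^{\otimes k} \| \leq \| \pi \otimes \pi^{\otimes(k-1)} - \pi \otimes \lambda^{\otimes(k-1)} \| + \| \pi \otimes \lambda^{\otimes(k-1)} - \lambda \otimes \lambda^{\otimes(k-1)} \|.
\end{align*}

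The main work is then the sub-lemma that total variation is additive under taking products against a common factor: if $\rho$ is a probability measure, then $\| \rho \otimes \mu - \rho \otimes \mu' \| = \| \mu - \mu' \|$ and $\| \mu \otimes \rho - \mu' \otimes \rho \| = \| \mu - \mu' \|$. To see this, pick a common dominating measure $\nu$ on $E$ with densities $f, g$ for $\mu, \mu'$ and $h$ for $\rho$; then $\rho \otimes \mu$ and $\rho \otimes \mu'$ have densities $h(x) f(y)$ and $h(x) g(y)$ with respect to $\nu \otimes \nu$, and by \eqref{eq:japan} together with Tonelli's theorem,
\begin{align*}
\| \rho \otimes \mu - \rho \otimes \mu' \| = \int_E \int_E h(x) |f(y) - g(y)| \dint\nu(x) \dint\nu(y) = \int_E |f(y) - g(y)| \dint\nu(y) = \| \mu - \mu' \|,
\end{align*}
using that $\int_E h \dint\nu = 1$. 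Applying this twice to the two terms above gives $\| \pi^{\otimes k} - \lambda^{\otimes k} \| \leq \| \pi^{\otimes(k-1)} - \lambda^{\otimes(k-1)} \| + \| \pi - \lambda \|$, and the inductive hypothesis $\| \pi^{\otimes(k-1)} - \lambda^{\otimes(k-1)} \| \leq (k-1)\| \pi - \lambda \|$ closes the induction.

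I do not anticipate a genuine obstacle here — the result is classical and the argument is routine; the only point requiring a little care is making sure a common dominating measure exists (e.g. $\nu = \mu + \mu' + \rho$, or more simply just invoking Radon–Nikodym as the excerpt already does in setting up \eqref{eq:japan}) and that the use of Tonelli to factor the double integral is legitimate, which it is since all integrands are nonnegative. Since the excerpt explicitly says the authors omit the proof, a short two-paragraph write-up along these lines is all that is needed.
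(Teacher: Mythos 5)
Your proof is correct, and since the paper omits the proof entirely (deferring to Blum and Pathak), there is no in-text argument to compare against; your telescoping/hybrid argument with the sub-lemma that tensoring with a common probability factor preserves total variation is exactly the standard way to establish this bound, and every step (triangle inequality, the identity $\|\rho\otimes\mu-\rho\otimes\mu'\|=\|\mu-\mu'\|$ via \eqref{eq:japan} and Tonelli, induction) goes through. The only cosmetic point is that in the inductive step the two factors live on different spaces ($E$ and $E^{k-1}$), so you should take a dominating measure on each factor separately (e.g.\ $\rho$ itself on $E$ and $\mu+\mu'$ on $E^{k-1}$) rather than a single $\nu$ on $E$; this changes nothing in the computation.
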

Suppose $\pi$ and $\lambda$ are probability measures on $(E,\mathcal{E})$ and $\mathcal{F}$ is a sub-$\sigma$-algebra of $\mathcal{E}$. We write 
\begin{align*}
|| \pi - \lambda ||_{ \mathcal{F} } := 2 \sup_{ F \in \mathcal{F} } | \pi(F) - \lambda(F) | .
\end{align*}

Our next lemma states that if two measures on $\mathbb{R}^N$ are close in total variation, so are their projections.

\begin{lemma} \label{lem:TV projection}
Let $\pi$ and $\lambda$ be probability densities on $\mathbb{R}^N$. Then with $\pi^{N \to k}$ and $\lambda^{N \to k}$ as in Definition \ref{def:proj} we have 
\begin{align*}
\int_{ \mathbb{R}^k } \left| \pi^{N \to k}(s) - \lambda^{N \to k}( s) \right| \dint s \leq \int_{ \mathbb{R}^N } \left| \pi(s) - \lambda(s) \right| \dint s.
\end{align*}
\end{lemma}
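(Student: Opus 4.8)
The plan is to reduce the statement to the integral representation \eqref{eq:japan} of total variation and then exploit the fact that marginalisation is a particular instance of pushing a measure forward along a measurable map. Concretely, let $P_k:\mathbb{R}^N\to\mathbb{R}^k$ denote the projection onto the first $k$ coordinates, $P_k(s_1,\ldots,s_N)=(s_1,\ldots,s_k)$. If $\Pi$ and $\Lambda$ are the probability measures on $\mathbb{R}^N$ with densities $\pi$ and $\lambda$ with respect to Lebesgue measure, then by definition the pushforwards $(P_k)_*\Pi$ and $(P_k)_*\Lambda$ are the measures on $\mathbb{R}^k$ with densities $\pi^{N\to k}$ and $\lambda^{N\to k}$. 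So the claimed inequality is exactly
\[
\|(P_k)_*\Pi - (P_k)_*\Lambda\| \leq \|\Pi - \Lambda\|,
\]
once both sides are rewritten via \eqref{eq:japan}.

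The key step is therefore the general principle that total variation distance does not increase under pushforward by a measurable map. I would prove this directly from the supremum definition: for any measurable $A\subseteq\mathbb{R}^k$,
\[
|(P_k)_*\Pi(A) - (P_k)_*\Lambda(A)| = |\Pi(P_k^{-1}A) - \Lambda(P_k^{-1}A)| \leq \sup_{B}|\Pi(B)-\Lambda(B)|,
\]
where the supremum on the right is over all measurable $B\subseteq\mathbb{R}^N$, since $P_k^{-1}A$ is one such $B$. Taking the supremum over $A$ on the left and multiplying by $2$ gives $\|(P_k)_*\Pi - (P_k)_*\Lambda\|\leq\|\Pi-\Lambda\|$. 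Finally I would translate both sides back into the integral form using \eqref{eq:japan}: the left-hand side equals $\int_{\mathbb{R}^k}|\pi^{N\to k}-\lambda^{N\to k}|$ and the right-hand side equals $\int_{\mathbb{R}^N}|\pi-\lambda|$, which is the assertion of the lemma.

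Alternatively, and perhaps even more transparently given that the densities are already in hand, one can argue without invoking the abstract pushforward lemma at all: by the triangle inequality applied under the integral sign,
\[
\int_{\mathbb{R}^k}\bigl|\pi^{N\to k}(s)-\lambda^{N\to k}(s)\bigr|\dint s
= \int_{\mathbb{R}^k}\Bigl|\int_{\mathbb{R}^{N-k}}\bigl(\pi(s,u)-\lambda(s,u)\bigr)\dint u\Bigr|\dint s
\leq \int_{\mathbb{R}^k}\int_{\mathbb{R}^{N-k}}\bigl|\pi(s,u)-\lambda(s,u)\bigr|\dint u\dint s,
\]
and the right-hand side is exactly $\int_{\mathbb{R}^N}|\pi-\lambda|$ by Fubini--Tonelli (everything is nonnegative, so there is no integrability subtlety). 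This is the route I would actually write down, since it is self-contained and avoids any appeal to measure-theoretic machinery beyond Tonelli.

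There is no real obstacle here; the only point requiring a moment's care is the interchange of $|\cdot|$ and the inner integral — i.e. that $|\int h| \leq \int |h|$ for the signed function $h=\pi-\lambda$, which is valid because $\pi,\lambda\geq0$ are both integrable, hence $h$ is integrable on $\mathbb{R}^N$ and Tonelli applies to $|h|$. One should also note in passing that the stated inequality is an inequality and not an equality precisely because cancellation can occur when integrating out the last $N-k$ coordinates; no effort is needed to address this, but it is worth a one-line remark so the reader is not puzzled by the loss.
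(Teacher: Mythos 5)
Your proposal is correct, and the route you say you would actually write down --- expressing the marginals as integrals over the last $N-k$ coordinates, moving the absolute value inside via the triangle inequality, and invoking Tonelli --- is exactly the paper's own proof. The pushforward/data-processing variant you sketch first is a fine alternative but is not needed.
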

\begin{proof}
This is a straightforward application of the triangle inequality. Indeed, for $s = (s_1,\ldots,s_k) \in \mathbb{R}^k$ and $\zeta = (\zeta_1,\ldots,\zeta_{N-k}) \in \mathbb{R}^{N- k}$, write $(s,\zeta) := (s_1,\ldots,s_k,\zeta_1,\ldots,\zeta_{N-k} ) \in \mathbb{R}^N$. Then
\begin{align*}
\int_{ \mathbb{R}^k } \left| \pi^{N \to k}(s) - \lambda^{N \to k}( s) \right| \dint s &= \int_{ \mathbb{R}^k } \left| \int_{ \mathbb{R}^{N-k} } \pi(s,\zeta) - \lambda( s,\zeta) ~ \dint \zeta \right| \dint s  \\
&\leq \int_{ \mathbb{R}^k }  \int_{ \mathbb{R}^{N-k} } \left| \pi(s,\zeta) - \lambda( s,\zeta) \right|   \dint \zeta\dint s  \\
&=  \int_{ \mathbb{R}^N } \left| \pi(s) - \lambda(s) \right| \dint s,
\end{align*}
which completes the proof.
\end{proof}

Our next lemma states that if $A \subseteq B$, and $B \setminus A$ is small, then the projections of uniform random vectors from $A$ and $B$ are close in total variation distance.

\begin{lemma} \label{lem:TV projection 2}
Let $A \subseteq B$ be measurable subsets of $\mathbb{R}^N$ with finite Lebesgue measure, and let $\mu_A$ anud $\mu_B$ denote the uniform densities on $A$ and $B$. Then with $\mu_A^{N \to k}$ and $\mu_B^{N \to k}$ defined using Definition \ref{def:proj}, we have
\begin{align*}
\int_{ \mathbb{R}^k } \Big| \mu_A^{N \to k} (s) - \mu_B^{ N \to k }(s)  \Big| \dint s \leq 2 \frac{ | B \setminus A| }{ |B|}.
\end{align*} 

\end{lemma}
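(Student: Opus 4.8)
The plan is to compute both marginal densities explicitly and bound their difference pointwise, using that $\mu_A$ and $\mu_B$ differ only through the normalizing constants and the indicator sets. Write $a := |A|$ and $b := |B|$, so $\mu_A = a^{-1} \ind_A$ and $\mu_B = b^{-1} \ind_B$ on $\mathbb{R}^N$. For $s \in \mathbb{R}^k$, integrating out the last $N-k$ coordinates gives
\[
\mu_A^{N \to k}(s) = \frac{|A_s|}{a}, \qquad \mu_B^{N \to k}(s) = \frac{|B_s|}{b},
\]
where $A_s := \{ \zeta \in \mathbb{R}^{N-k} : (s,\zeta) \in A \}$ is the section of $A$ over $s$, and likewise $B_s$; here $|\cdot|$ denotes Lebesgue measure of the appropriate dimension, and $\int_{\mathbb{R}^k} |A_s| \dint s = a$, $\int_{\mathbb{R}^k} |B_s|\dint s = b$ by Fubini.

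The key step is the pointwise estimate. Since $A \subseteq B$, we have $A_s \subseteq B_s$ and hence $|A_s| \leq |B_s|$ for every $s$. Writing the difference over a common denominator,
\[
\Big| \mu_A^{N\to k}(s) - \mu_B^{N\to k}(s) \Big| = \left| \frac{b\,|A_s| - a\,|B_s|}{ab} \right| \leq \frac{ b\,|A_s| - a\,|A_s| }{ab} + \frac{ a\,|B_s| - a\,|A_s| }{ab} = \frac{(b-a)|A_s|}{ab} + \frac{|B_s| - |A_s|}{b},
\]
where in the middle step I added and subtracted $a\,|A_s|$ inside the absolute value and used $|A_s| \le |B_s|$ and $a \le b$ to drop the absolute value termwise. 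Now integrate over $s \in \mathbb{R}^k$: the first term integrates to $\frac{(b-a)a}{ab} = \frac{b-a}{b}$, and the second to $\frac{b-a}{b}$ as well, since $\int |B_s| - |A_s| \dint s = b - a = |B \setminus A|$. Adding these gives the bound $2\,|B\setminus A|/|B|$.

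I expect no real obstacle here — the only point requiring a little care is the Fubini/Tonelli justification that the sectional measures $|A_s|$, $|B_s|$ are themselves measurable functions of $s$ with the stated integrals, which is standard for measurable sets of finite Lebesgue measure. One could alternatively package this more slickly via Lemma \ref{lem:TV projection}: that lemma reduces the claim to bounding $\int_{\mathbb{R}^N} |\mu_A(s) - \mu_B(s)| \dint s$, and since $\mu_A - \mu_B = (a^{-1} - b^{-1})\ind_A - b^{-1}\ind_{B\setminus A}$, the triangle inequality gives $\int |\mu_A - \mu_B| \le a\,(a^{-1} - b^{-1}) + b^{-1}|B\setminus A| = \frac{b-a}{b} + \frac{b-a}{b} = 2\,|B\setminus A|/|B|$, which is in fact the cleanest route and avoids sections altogether.
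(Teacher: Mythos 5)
Your proposal is correct, and the ``slicker'' route you sketch at the end --- Lemma \ref{lem:TV projection} followed by the exact computation $\int_{\mathbb{R}^N}|\mu_A-\mu_B| = 2|B\setminus A|/|B|$ --- is precisely the paper's proof. Your main argument via the sections $A_s \subseteq B_s$ is also valid (the pointwise triangle-inequality split and the Fubini integrations check out), but it is just the same estimate unfolded by hand and gains nothing over the two-line version.
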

\begin{proof}
Using Lemma \ref{lem:TV projection} to obtain the inequality below, we have
\begin{align*}
\int_{ \mathbb{R}^k } \Big| \mu_A^{N \to k} (s) - \mu_B^{ N \to k }(s)  \Big| \dint s \leq \int_{ \mathbb{R}^N } \Big| \mu_A(s) - \mu_B(s)  \Big| \dint s  =  2 \frac{ | B \setminus A| }{ |B|},
\end{align*}
completing the proof.
\end{proof}

%Recall that if $\mathcal{F}$ is a sub-$\sigma$-algebra of $\mathcal{E}$ and $E' \in \mathcal{E}$, the conditional %expectation $\mu( E' | \mathcal{F}) $ of $E$ given $\mathcal{F}$ is the ($\mu$-a.s. unique) $\mathcal{F}$-%measurable random variable such that for every $F' \in \mathcal{F}$,
%\begin{align*}
%\mu( E' \cap F' ) = \int_{F'} \mu( E' |\mathcal{F} ) \dint \mu
%\end{align*}
%We say a sub-$\sigma$-algebra $\mathcal{F}$ of $\mathcal{E}$ is \emph{sufficient} for $\pi$ and $\lambda$ if $%\pi(E' |\mathcal{F})=  \lambda(E' | \mathcal{F})$ for every $E' \in \mathcal{E}$.
%We now state a lemma due to Diaconis and Freedman \cite{DF}
%\begin{lemma}
%Let $\pi$ and $\lambda$ be probability measures on a measurable space $(E,\mathcal{E})$, and suppose $\mathcal{F}$ is a sub-$\sigma$-algebra of $\mathcal{E}$ that is sufficient for $\pi$ and $\lambda$. Then
%\begin{align*}
%|| \pi - \lambda ||_{ \mathcal{F} }  = || \pi - \lambda ||_{ \mathcal{E} }  
%\end{align*}
%\end{lemma}

% % % % % % % % % % % % % % % % % % % % % % % %
\subsection{Total variation and pushforwards}
% % % % % % % % % % % % % % % % % % % % % % % %

We now relate total variation distances under pushforwards of certain probability measures. Let $(E,\mathcal{E})$ and $(F,\mathcal{F})$ be measurable spaces. If $\mu$ is a measure on $E$, and $\Phi:E\to F$ is a measurable function, we write $\Phi^{ \# } \mu$ for the pushforward measure on $F$, defined by 
\begin{align*}
\Phi^{ \# } \mu \left( B \right) := \mu \left( \Phi^{ - 1}(B) \right)
\end{align*}
for measurable subsets $B$ of $F$. 
If $\mu$ is a probability measure on $E$, and $X$ is a random variable distributed according to $\mu$, then $\Phi^\# \mu$ is the law of $\Phi(X)$. 
It is straightforward to check that if $\mu$ is absolutely continuous with respect to $\lambda$, then $\Phi^\# \mu$ is absolutely continuous with respect to $\Phi^\# \lambda$. 

The following lemma may be regarded as a more measure-theoretic formulation of Diaconis and Freedman's sufficiency lemma, \cite[Lemma (2.4)]{DF}, stating that that total variation of certain measures is preserved under pushforwards.

\begin{lemma} \label{lem:pushforwards}
Let $E$ and $F$ be measurable spaces, suppose $\mu$ is a measure on $E$ and suppose further that $\Phi:E \to F$ is a measurable function. Suppose that $\pi$ and $\lambda$ are probability measures on $E$ that are absolutely continuous with respect to $\mu$, and such that there exist $f,g : F \to [0,\infty]$ such that
\begin{align*}
\frac{ \mathrm{d} \pi }{ \mathrm{d} \mu } = f \circ \Phi \qquad \text{and} \qquad \frac{ \mathrm{d} \lambda }{ \mathrm{d} \mu } = g \circ \Phi. 
\end{align*}
Then 
\begin{align} \label{eq:TV identity}
|| \pi - \lambda || = || \Phi^{\#} \pi - \Phi^\# \lambda ||.
\end{align}
\end{lemma}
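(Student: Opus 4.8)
The plan is to exploit the fact that both $\pi$ and $\lambda$ have Radon--Nikodym derivatives with respect to $\mu$ that factor through $\Phi$, and then reduce the total variation on $E$ to a total variation on $F$ by pushing the base measure $\mu$ forward. First I would invoke the integral representation \eqref{eq:japan}: since $\mathrm{d}\pi/\mathrm{d}\mu = f\circ\Phi$ and $\mathrm{d}\lambda/\mathrm{d}\mu = g\circ\Phi$, we have
\begin{align*}
\|\pi - \lambda\| = \int_E \big| f(\Phi(x)) - g(\Phi(x)) \big| \, \mu(\mathrm{d}x).
\end{align*}
The integrand is a function on $E$ of the form $h\circ\Phi$ with $h := |f-g| : F \to [0,\infty]$ measurable, so the change-of-variables (pushforward) formula $\int_E (h\circ\Phi)\,\mathrm{d}\mu = \int_F h \, \mathrm{d}(\Phi^\#\mu)$ applies, giving
\begin{align*}
\|\pi - \lambda\| = \int_F \big| f(y) - g(y) \big| \, (\Phi^\#\mu)(\mathrm{d}y).
\end{align*}

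Next I would identify the right-hand side as a total variation on $F$. Since $\mathrm{d}\pi/\mathrm{d}\mu = f\circ\Phi$, the pushforward $\Phi^\#\pi$ is absolutely continuous with respect to $\Phi^\#\mu$ with density $f$; one checks this directly from the definition of pushforward, writing for a measurable $B \subseteq F$ that $\Phi^\#\pi(B) = \pi(\Phi^{-1}(B)) = \int_{\Phi^{-1}(B)} (f\circ\Phi)\,\mathrm{d}\mu = \int_B f \, \mathrm{d}(\Phi^\#\mu)$, again by the change-of-variables formula applied to $(f\cdot\ind_B)\circ\Phi = (f\circ\Phi)\cdot\ind_{\Phi^{-1}(B)}$. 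Likewise $\mathrm{d}(\Phi^\#\lambda)/\mathrm{d}(\Phi^\#\mu) = g$. Applying \eqref{eq:japan} once more, this time on the space $F$ with base measure $\Phi^\#\mu$, yields
\begin{align*}
\|\Phi^\#\pi - \Phi^\#\lambda\| = \int_F |f - g| \, \mathrm{d}(\Phi^\#\mu),
\end{align*}
and comparing with the previous display establishes \eqref{eq:TV identity}.

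The argument is essentially a bookkeeping exercise, so there is no serious obstacle; the one point requiring mild care is the handling of $\infty$-valued densities (the hypothesis allows $f, g : F \to [0,\infty]$), but since all the manipulations are monotone integrals of nonnegative functions, the change-of-variables formula and the representation \eqref{eq:japan} remain valid verbatim in $[0,\infty]$, and the identities hold as equalities in $[0,\infty]$. A secondary point worth a sentence is that the factorization hypothesis is genuinely needed: it is what makes the integrand in $\|\pi-\lambda\|$ a function of $\Phi(x)$ alone, so that no information is lost in passing to $F$; without it one would only get the inequality $\|\Phi^\#\pi - \Phi^\#\lambda\| \le \|\pi - \lambda\|$ that follows from $\Phi$ being a (deterministic) data-processing map.
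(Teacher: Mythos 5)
Your proposal is correct and follows essentially the same route as the paper: apply the representation \eqref{eq:japan} on $E$, change variables via the pushforward formula, check that $\frac{\mathrm{d}\,\Phi^{\#}\pi}{\mathrm{d}\,\Phi^{\#}\mu}=f$ and $\frac{\mathrm{d}\,\Phi^{\#}\lambda}{\mathrm{d}\,\Phi^{\#}\mu}=g$, and apply \eqref{eq:japan} again on $F$. You merely traverse the chain of equalities in the opposite direction and spell out the verification the paper labels ``easily verified,'' which is fine.
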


\begin{proof}
It is easily verified that 
\begin{align*}
\frac{ \mathrm{d} \Phi^{\# } \pi }{ \mathrm{d} \Phi^{\# } \mu } = f  \qquad \text{and} \qquad \frac{ \mathrm{d}  \Phi^{\# }\lambda }{ \mathrm{d}  \Phi^{\# }\mu } = g. 
\end{align*}
In particular, using \eqref{eq:japan} to obtain the outer equalities below, and changing variable to obtain the central equality, we have 
\begin{align*}
|| \Phi^{\#} \pi - \Phi^\# \lambda || = \int_F | f(y) - g(y) | \Phi^{\# } \mu ( \mathrm{d} y) = \int_E | f( \Phi(s) ) - g( \Phi(s)) | \mu( \mathrm{d} s ) = || \pi - \lambda || 
\end{align*} 
as required.
\end{proof}

% % % % % % % % % % % % % % % % % % % % % % % % % % % % % % % % % %
\subsection{The pushforward by a potential} \label{sec:potential}
% % % % % % % % % % % % % % % % % % % % % % % % % % % % % % % % % %
We will occasionally abuse notation in the following sense: if $\nu$ is a probability density on $\mathbb{R}$ and $f:\mathbb{R} \to \mathbb{R}$ is a measurable mapping, we write  $f^{\#} \nu$ for the probability density on $\mathbb{R}$ associated with the pushforward by $f$ of the measure $\nu(s) \mathrm{d}s$. Now given our potential $\phi$ and a probability measure on $\mathbb{R}$, we would like to understand the densities associated with pushforwards using $\phi$. To this end, consider the increasing function $F:[0,\infty) \to [0,\infty)$ given by letting $F(y)$ denote the Lebesgue measure of the set of all points $s \in \mathbb{R}$ for which $\phi(s) \leq y$, that is $F(y) := \left| \phi^{-1}[ 0, y] \right|$. Suppose $\phi$ is differentiable at $s$ for all $s \in \phi^{-1}(y)$. Then it is easily verified that
\begin{align} \label{eq:psi def}
\psi(y) := F'(y) = \sum_{ s \in \phi^{ -1 }(y) } 1/ | \phi'(s) | ,
\end{align} 
with the understanding that $\psi(y)$ is equal to $+ \infty$ whenever there is an $s \in \phi^{ - 1}(y)$ such that $\phi'(s) = 0$. The function $\psi$ is defined for almost-all $y \in [0,\infty)$, and has the property that for all $f$ such that $f(\phi(s))$ is integrable,
\begin{align} \label{eq:change}
\int_{- \infty}^\infty f\left( \phi(s) \right) \dint s = \int_0^\infty f(y) \psi(y) \dint y.
\end{align}
In particular, whenever $\nu:\mathbb{R} \to [0,\infty)$ is a probability density of the form $\nu(s) = f(\phi(s))$, the pushforward $\phi^{\#}\nu$ of the measure $\nu(s) \mathrm{d}s$ has density $\phi^{\#} \nu(y) := f(y) \psi(y)$.

Recall the partition function $Z(\alpha) := \int_{D_\phi} e^{ \alpha \phi(s) } \dint s$ defined in Section \ref{sec:results}. We note that by \eqref{eq:change} we may alternatively write
\begin{align} \label{eq:new PF}
Z(\alpha) := \int_0^\infty e^{ \alpha y} \psi( y) \dint y.
\end{align}
Now for all $\alpha$ such that $Z(\alpha) < \infty$ we may define the $\alpha$-tilted probability density on $[0,\infty)$ by
\begin{align} \label{eq:alpha push}
\psi_\alpha(y) := \frac{ e^{\alpha y} \psi(y) }{ Z(\alpha) }.
\end{align}
In particular, in the setting where the Lebesgue measure of $D_\phi$ is finite so that $Z(0) = |D_\phi| < \infty$, whenever $X$ is uniformly distributed on $D_\phi$, the random variable $\phi(X)$ is distributed according to the probability density
\begin{align} \label{eq:zero push}
\psi_0(y) := \frac{ \psi(y) }{ |D_\phi|}.
\end{align}
Note that $\psi_\alpha = \phi^{ \# } \gamma_{\phi,\alpha}$ where $\gamma_{\phi,\alpha}$ was defined in Section \ref{sec:results}.

Finally, define the multivariate potential $\Phi:\mathbb{R}^k \to \mathbb{R}^k$ by $\Phi(s_1,\ldots,s_k) := \left( \phi(s_1),\ldots,\phi(s_k) \right)$. We note that whenever $\nu$ is a measure on $\mathbb{R}$, we have
\begin{align*}
( \phi^{ \#} \nu )^{ \otimes k} = \Phi^{\#} ( \nu^{ \otimes k }).
\end{align*}

% % % % % % % % % % % % % % % % % % % %
% % % % % % % % % % % % % % % % % % % %
% % % % % % % % % % % % % % % % % % % %
% % % % % % % % % % % % % % % % % % % %
\section{The quantitative Cram\'er Theorem and Gibbs conditioning principle } \label{sec:qgibbs0}
% % % % % % % % % % % % % % % % % % % %
% % % % % % % % % % % % % % % % % % % %
% % % % % % % % % % % % % % % % % % % %
% % % % % % % % % % % % % % % % % % % %
 
In this section we provide further background on both Cram\'er's theorem and the Gibbs conditioning principle, ultimately giving quantitative versions of both principles that are used in the proof of Theorem \ref{thm:main}. 

To this end, let $Y_1,Y_2, \ldots$ be a sequence of independent random variables identically distributed according to a probability density $\psi$ on $\mathbb{R}$, and suppose that $\mathbb{E}[Y_1]=t_0$. Suppose further that the moment generating function $Z(\alpha) := \int_{-\infty}^\infty e^{\alpha y } \psi(y) \mathrm{d}y$ associated with the density exists in an open interval containing the origin. Fix $k \in\N$ and let $t > t_0$. 

According to Cramer's theorem, \cite[Section 2.2]{DZ2010}, we have
\begin{align} 
\lim_{N \to \infty} \frac{1}{N } \log \mathbb{P} \left( X_1 + \ldots + X_N > t N \right) = I(t),
\end{align}
where $I:[t_0,\infty) \to [0,\infty]$ is a \emph{rate function} given by 
\begin{align*}
I(t) := \alpha_t t - \log Z(\alpha_t),
\end{align*}
where  $\alpha_t$ is the solution to $\frac{ \partial}{ \partial \alpha} \log Z(\alpha) = t$. 

In the present paper we will appeal to a quantitative version of Cram\'er's theorem. Setting 
\[\sigma_t^2 := \frac{ \partial^2}{ \partial \alpha^2} \log Z(\alpha )\big|_{\alpha = \alpha_t},\]
we have the following.

\begin{lemma}[Theorem 3 of Petrov \cite{P1965}] \label{lem:qcramer}
We have 
\begin{align} \label{eq:q cramer}
\mathbb{P} \left( Y_1 + \ldots + Y_N > t N \right) = (1 + \varepsilon_N ) \frac{ 1}{ \sqrt{ 2 \pi \sigma_t^2 N } } e^{ - N I(t) },
\end{align}
where there exists a constant $C\in(0,\infty)$ such that $|\varepsilon_N| \leq C/ \sqrt{N}$. 
\end{lemma}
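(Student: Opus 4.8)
The plan is to derive \eqref{eq:q cramer} from the classical Cram\'er exponential change of measure --- which is essentially how Petrov's Theorem 3 \cite{P1965} proceeds, so strictly one may simply cite \cite{P1965} --- and I outline the argument. First I would recall that $\alpha_t$ is chosen precisely so that the tilted density $\widetilde\psi(y) := e^{\alpha_t y}\psi(y)/Z(\alpha_t)$ has mean $\int_{-\infty}^\infty y\,\widetilde\psi(y)\dint y = t$ and variance $\sigma_t^2$. Letting $\mathbb{P}_{\alpha_t}$ denote the law under which $Y_1,Y_2,\ldots$ are i.i.d.\ with density $\widetilde\psi$, and writing $S_N := Y_1 + \ldots + Y_N$, the Radon--Nikodym identity $\dint\mathbb{P}/\dint\mathbb{P}_{\alpha_t} = Z(\alpha_t)^N e^{-\alpha_t S_N}$ together with $I(t) = \alpha_t t - \log Z(\alpha_t)$ gives
\begin{align*}
\mathbb{P}\left( S_N > tN \right) = e^{-NI(t)}\, \mathbb{E}_{\alpha_t}\!\left[ e^{-\alpha_t(S_N - tN)} \ind\{ S_N > tN\} \right].
\end{align*}
This isolates the exponential factor $e^{-NI(t)}$ and reduces matters to showing that the remaining expectation equals $(1 + \varepsilon_N)(2\pi\sigma_t^2 N)^{-1/2}$ with $|\varepsilon_N| \le C/\sqrt N$, the exact polynomial prefactor being tracked along the way.

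Next I would analyse that expectation via a local central limit theorem. Under $\mathbb{P}_{\alpha_t}$ the centered sum $T_N := S_N - tN$ has mean $0$ and variance $N\sigma_t^2$; since $\psi$ is an absolutely continuous density, the Riemann--Lebesgue lemma forces its characteristic function to vanish at infinity, so Cram\'er's smoothness condition holds and the density of $T_N/(\sigma_t\sqrt N)$ equals $\varphi(u)\bigl(1 + O(N^{-1/2})\bigr)$ uniformly for $u$ in a neighbourhood of the origin, where $\varphi$ is the standard Gaussian density. I would then split $\mathbb{E}_{\alpha_t}[ e^{-\alpha_t T_N} \ind\{ T_N > 0\}]$ according to whether $T_N \le \sqrt N\log N$ or not: because $\alpha_t > 0$ (as $t > t_0$), the far tail contributes at most $e^{-\alpha_t\sqrt N\log N}$ by a trivial bound, hence is superpolynomially negligible, while on the bulk the substitution $w = \sigma_t\sqrt N\, u$ and the local limit estimate turn the integral into $(2\pi\sigma_t^2 N)^{-1/2}$ times an elementary integral of $e^{-\alpha_t w}$ against a density that is constant up to $O(N^{-1/2})$. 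Collecting these contributions yields \eqref{eq:q cramer}.

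The one genuinely substantive ingredient is the quantitative local CLT with explicit $O(N^{-1/2})$ Edgeworth error, uniform near the origin; proving this from scratch requires a Fourier-analytic argument (expansion of $\widehat{\widetilde\psi}^{\,N}$ near the origin, Riemann--Lebesgue control away from it), and this is exactly the step I would import directly from Petrov \cite{P1965} rather than redo. Everything else --- the change of measure and the crude Chernoff-type bound discarding the event $\{T_N \gg \sqrt N\}$ --- is routine bookkeeping.
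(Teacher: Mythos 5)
The paper itself gives no proof of this lemma --- it is imported verbatim as Theorem 3 of Petrov \cite{P1965} --- so your reconstruction via exponential tilting plus a quantitative local CLT is the natural route, and it is indeed how such sharp large deviation estimates (Bahadur--Rao/Petrov) are proved. The change-of-measure step and the splitting at $T_N=\sqrt N\log N$ are fine.

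There is, however, a concrete inconsistency at the heart of your reduction. You claim that it suffices to show
\begin{align*}
\mathbb{E}_{\alpha_t}\!\left[ e^{-\alpha_t(S_N-tN)}\ind\{S_N>tN\}\right] = (1+\varepsilon_N)\,(2\pi\sigma_t^2N)^{-1/2},
\end{align*}
but your own bulk computation does not produce this: after the substitution and the local limit estimate, the "elementary integral of $e^{-\alpha_t w}$" is $\int_0^\infty e^{-\alpha_t w}\,\dint w = 1/\alpha_t$, so the expectation is $(1+O(N^{-1/2}))\,\frac{1}{\alpha_t\sigma_t\sqrt{2\pi N}}$ and the final asymptotic is the Bahadur--Rao form $\mathbb{P}(S_N>tN)=(1+\varepsilon_N)\frac{1}{\alpha_t\sigma_t\sqrt{2\pi N}}e^{-NI(t)}$. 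This matches what Petrov actually proves (and, notably, the prefactor $\frac{1}{\alpha_t\sigma_t\sqrt{2\pi(N-k)}}$ used in the paper's own appendix at \eqref{eq:ratio2}), but it differs from the prefactor in \eqref{eq:q cramer} by the factor $\alpha_t^{-1}$. So as written your proof does not establish the displayed statement; you should either carry the $1/\alpha_t$ through and note that the lemma as stated omits it (harmless for the paper's applications, where only $Ce^{-cN}$ bounds and $t$-dependent constants are used), or explain why it can be absorbed --- it cannot, unless $\alpha_t=1$. A smaller technical caveat: for an Edgeworth expansion at the level of \emph{densities} with uniform $O(N^{-1/2})$ error, Riemann--Lebesgue decay of $\widehat{\psi}$ is not quite enough; one needs, e.g., integrability of some power of the characteristic function (or one can integrate by parts and use the distribution-function Edgeworth expansion, for which Cram\'er's condition suffices). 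Since you import this step from Petrov anyway, it is only a matter of stating the right hypothesis.
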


We now turn to discussing the Gibbs conditioning principle \cite[Section 7.3]{DZ2010}, which asserts that as $N \to \infty$, conditioned on the event $\{ Y_1  + \ldots + Y_N \leq t N \}$, the $k$-dimensional random vector $(Y_1,\ldots,Y_k)$ converges in distribution to the $k$-dimensional product of the measure with density
\begin{align*}
\psi_{\alpha_t}(y)  := e^{ \alpha_t y} \psi(y) / Z(\alpha_t),
\end{align*} 
where $\alpha_t < 0 $ is chosen so that $\int_{-\infty}^\infty \gamma_{\alpha_t}(y) y \mathrm{d} y = t$. 

The quantitative Gibbs conditional principle, which we state shortly, gives a  statement of this result in terms of total variation distances. Namely, define the constants 
\begin{align*}
\xi_k := \frac{1}{2} \mathbb{E} \left[ \Bigg| 1 - \frac{ Z_1 + \ldots + Z_k - k \mathbb{E}[ Z_1 ] }{ \sqrt{ k \mathrm{Var}[Z_1]} } \Bigg| \right].
\end{align*}
By the central limit theorem, as $k \to \infty$,
\[\xi_{k} \to \xi := \frac{1}{2} \int_{ - \infty}^\infty \frac{e^{ - \frac{1}{2} s^2 } }{ \sqrt{2 \pi }}  \big| 1 - s^2 \big| \, \dint s = \sqrt{ \frac{2}{ \pi e} },
\]
where the final equality above follows from noting  $\frac{ \dint}{ \dint u } \left( u e^{ - u^2/2} \right) = (1 - u^2 ) e^{ - u^2/2}$. Now for $\theta \in (0,1)$ define
\begin{align*}
Q(\theta) := \int_{ -\infty}^\infty \Big| 1 - \sqrt{ 1 - \theta } e^{ \theta \zeta^2/2} \Big| \frac{1}{ \sqrt{ 2 \pi }} \exp \left( - \frac{1}{2} \zeta^2 \right) \dint \zeta. 
\end{align*}
It is easily verified that $Q'(0) = \xi$. 

We now state our quantitative version of the Gibbs conditioning principle, with a minor restatement of the form given in \cite{DF2}.

\begin{lemma}[Theorem 1.6 of \cite{DF2}] \label{lem:qgibbs}
Let $\psi_{ \geq t}^{N \to k}$ be the marginal density of $(Y_1,\ldots,Y_k)$ conditioned on the event $\{ Y_1 + \ldots + Y_N \geq  tN \}$. Let $\alpha_t$ be the such that $\frac{ \partial}{ \partial \alpha}\log  Z(\alpha)\Big|_{\alpha = \alpha_t} = t$. Then
\begin{align*}
\int_{\mathbb{R}^k} \Big| \psi_{\geq t}^{N \to k }(y) - \psi_{\alpha_t}^{ \otimes k}(y) \Big| \dint y= 
\begin{cases}
\left( 1 + \varepsilon^{(1)}_{k,N} \right) \xi_{k,\alpha_t}\frac{k}{N} \qquad &: \text{$k$ fixed, $N \to \infty$},\\
\left( 1 + \varepsilon^{(2)}_{k,N}  \right) \xi \frac{k}{N} \qquad &:\text{$k = o(N), k,N \to \infty$},\\
\left( 1 + \varepsilon^{(3)}_{k,N}  \right) Q(\theta) \qquad &:\text{$k \sim \theta N, k,N \to \infty$},
\end{cases}
\end{align*}
where there is a universal constant $C \in (0,\infty)$ such that setting $C' := C   \frac{ \mathbb{E} [ | X - \mathbb{E}[X]|^4] }{ \mathrm{Var}[X]^{3/2} }$ we have 
\begin{align*}
|\varepsilon^{(1)}_{k,N}| \leq C' \sqrt{ \frac{k}{N} }, \qquad |\varepsilon^{(2)}_{k,N}|, |\varepsilon^{(3)}_{k,N}| \leq C' \left( \frac{1}{ \sqrt{k}}  + \frac{1}{ \sqrt{N-k}} \right).
\end{align*}
\end{lemma}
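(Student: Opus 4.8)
The plan is to pass to the exponentially tilted density $\tilde\psi := \psi_{\alpha_t}$ --- which has mean exactly $t$, so that the conditioning event ceases to be atypical --- rewrite the target total variation as the expectation of a likelihood ratio, evaluate that ratio by sharp large-deviation asymptotics, and finally carry out the resulting Gaussian integral in each of the three regimes. For the first step, write $S_m$ for a sum of $m$ i.i.d.\ copies of $\psi$ and $\sigma_k := y_1 + \cdots + y_k$; disintegrating the conditional law gives the exact identity
\begin{equation*}
\psi_{\geq t}^{N\to k}(y) = \psi(y_1)\cdots\psi(y_k)\,\frac{\mathbb{P}\bigl(S_{N-k}\geq tN - \sigma_k\bigr)}{\mathbb{P}\bigl(S_N\geq tN\bigr)}.
\end{equation*}
Since $\psi(y_i) = Z(\alpha_t)\,e^{-\alpha_t y_i}\,\tilde\psi(y_i)$, the likelihood ratio $R_N(y):=\psi_{\geq t}^{N\to k}(y)\big/\tilde\psi^{\otimes k}(y)$ equals $Z(\alpha_t)^k e^{-\alpha_t\sigma_k}$ times the ratio of tail probabilities above, and (the two densities sharing the same support) the quantity to be bounded is
\begin{equation*}
\int_{\mathbb{R}^k}\bigl|R_N(y)-1\bigr|\,\tilde\psi^{\otimes k}(y)\,\mathrm{d}y = \mathbb{E}\,\bigl|R_N(\bY)-1\bigr|,
\end{equation*}
where $\bY=(\tilde Y_1,\ldots,\tilde Y_k)$ has i.i.d.\ coordinates of law $\tilde\psi$, so that $W := \sigma_k - tk = \sum_{i=1}^k(\tilde Y_i - t)$ has mean $0$ and variance $k\sigma_t^2$.

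For the second step, write $tN-\sigma_k = (N-k)\bigl(t-\tfrac{W}{N-k}\bigr)$ and apply an Edgeworth-refined, level-uniform form of Petrov's sharp large-deviation estimate (a strengthening of Lemma \ref{lem:qcramer} in which the correction $\varepsilon$ is not merely bounded but expanded, with leading term a smooth function of the tilting parameter), together with the Taylor expansion of $I$ at $t$ via $I(t)=\alpha_t t-\log Z(\alpha_t)$, $I'(t)=\alpha_t$, $I''(t)=\sigma_t^{-2}$. Substituting this and the bare ($k=0$) estimate for $\mathbb{P}(S_N\geq tN)$ into $R_N$, the factors $Z(\alpha_t)^k$, $e^{-\alpha_t\sigma_k}$, $e^{kI(t)}$ and $e^{\alpha_t W}$ combine to exactly $1$ --- reflecting that conditioning on $\{S_N\geq tN\}$ and on $\{S_N=tN\}$ give, to leading order, the same law of the first $k$ coordinates --- and, crucially, the $O(N^{-1/2})$ Edgeworth corrections in numerator and denominator arise at nearly the same tilting parameter and so cancel down to relative size $O\bigl((k+|W|)N^{-3/2}\bigr)$ in the ratio. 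What remains is
\begin{equation*}
R_N(y) = \Bigl(1+O\bigl((k+|W|)N^{-3/2}\bigr)\Bigr)\sqrt{\frac{N}{N-k}}\;\exp\!\Bigl(-\frac{W^2}{2\sigma_t^2(N-k)}+O\bigl(\tfrac{|W|^3}{(N-k)^2}\bigr)\Bigr).
\end{equation*}

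For the last step, put $\zeta := W/(\sigma_t\sqrt k)$, so $\tfrac{W^2}{2\sigma_t^2(N-k)}=\tfrac{k}{2(N-k)}\zeta^2$ and $\sqrt{N/(N-k)}=(1-k/N)^{-1/2}$, and split $\mathbb{E}|R_N(\bY)-1|$ over $\{|\zeta|\leq M\}$ and its complement; on the complement both densities are negligible, and a crude bound using the exponential moments of $\tilde\psi$ disposes of it once $M$ is large. On the bulk there are three cases. \emph{(i)} For $k$ fixed with $N\to\infty$, expanding $\sqrt{N/(N-k)}-1=\tfrac{k}{2N}+O(N^{-2})$ and $1-e^{-k\zeta^2/2(N-k)}=\tfrac{k\zeta^2}{2N}+O(N^{-2})$ gives $R_N-1=\tfrac{k}{2N}(1-\zeta^2)+O\bigl(k^{3/2}N^{-2}+kN^{-3/2}\bigr)$, whence $\mathbb{E}|R_N-1|=\tfrac{k}{2N}\,\mathbb{E}|1-\zeta^2|+\cdots=\xi_{k,\alpha_t}\tfrac{k}{N}(1+\varepsilon^{(1)}_{k,N})$. \emph{(ii)} For $k=o(N)$ with $k,N\to\infty$, the same linearisation applies and, in addition, a Berry--Esseen bound for $\zeta$ replaces $\xi_{k,\alpha_t}$ by its Gaussian value $\xi=\sqrt{2/(\pi e)}$. \emph{(iii)} For $k\sim\theta N$ no linearisation is available, but the substitution $\eta:=\zeta/\sqrt{1-\theta}$ turns the weight $\tfrac{1}{\sqrt{1-\theta}}e^{-\frac{\theta}{2(1-\theta)}\zeta^2}\cdot\tfrac{1}{\sqrt{2\pi}}e^{-\zeta^2/2}\,\mathrm{d}\zeta$ into $\tfrac{1}{\sqrt{2\pi}}e^{-\eta^2/2}\,\mathrm{d}\eta$ and the integrand $\bigl|\tfrac{1}{\sqrt{1-\theta}}e^{-\frac{\theta}{2(1-\theta)}\zeta^2}-1\bigr|$ into $\bigl|1-\sqrt{1-\theta}\,e^{\theta\eta^2/2}\bigr|$, so that after a Berry--Esseen approximation $\mathbb{E}|R_N-1|=Q(\theta)(1+\varepsilon^{(3)}_{k,N})$. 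Collecting the cancelled Edgeworth remainder $O((k+|W|)N^{-3/2})$, the linearisation remainders, the Berry--Esseen errors for $\zeta$, and the bulk/tail truncation gives the stated bounds on the $\varepsilon^{(i)}_{k,N}$, with $C'$ produced by the fourth-moment Berry--Esseen estimates for $W$ and the Edgeworth expansion underlying the uniform form of Lemma \ref{lem:qcramer}.

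The delicate part is precisely this error bookkeeping around the large-deviation step: one needs Petrov's asymptotics for $\mathbb{P}(S_m\geq ms)$ not merely at $s=t$ but uniformly over $s$ in a neighbourhood of $t$, and with the $N^{-1/2}$ correction \emph{identified} (an Edgeworth term) rather than merely bounded, so that these corrections cancel between numerator and denominator; one must then carry the residual error --- along with the Berry--Esseen error for the central limit behaviour of $W$ and the crude estimate on $\{|\zeta|>M\}$ --- through the three rather different asymptotic regimes so that everything assembles into the clean rates $\varepsilon^{(1)}_{k,N}=O(\sqrt{k/N})$ and $\varepsilon^{(2)}_{k,N},\varepsilon^{(3)}_{k,N}=O(k^{-1/2}+(N-k)^{-1/2})$.
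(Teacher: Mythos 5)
Your proposal is correct in outline and follows essentially the same route as the paper (and Diaconis--Freedman \cite{DF2}): since your likelihood ratio $R_N$ depends on $y$ only through the sum $\sigma_k$, writing the total variation as $\mathbb{E}\,|R_N-1|$ under the tilted product law is the same reduction to the sufficient statistic that the paper carries out via its pushforward lemma, and your uniform Petrov/Edgeworth evaluation of the tail-probability ratio --- with the exponential factors cancelling through the Taylor expansion of $I$ at $t$ --- is exactly the paper's Edgeworth computation of the ratio of tilted convolution densities, producing the same limiting functionals $\xi_{k,\alpha_t}$, $\xi$ and $Q(\theta)$ in the three regimes. The one point to be careful about is your asserted cancellation of the $O(N^{-1/2})$ corrections down to relative size $O\bigl((k+|W|)N^{-3/2}\bigr)$, which (as needed for the fixed-$k$ rate $\sqrt{k/N}$) requires a uniform expansion carried to order $1/m$ rather than a one-term Edgeworth bound; this is available here because the tilted laws have exponential moments and densities, and it is precisely the bookkeeping the paper itself defers to \cite{DF2}.
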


We now have all the tools at hand to prove Theorem \ref{thm:main}, which we do over the next two sections.

% % % % % % % % % % % % % % % % % % % %
% % % % % % % % % % % % % % % % % % % %
% % % % % % % % % % % % % % % % % % % %
% % % % % % % % % % % % % % % % % % % %
\section{Proof of Theorem \ref{thm:main}: the $t > t_{\mathrm{crit}}$ case} \label{sec:main2}
% % % % % % % % % % % % % % % % % % % %
% % % % % % % % % % % % % % % % % % % %
% % % % % % % % % % % % % % % % % % % %
% % % % % % % % % % % % % % % % % % % %

Recall that according to the first point in Theorem \ref{thm:main}, for all $t_{\mathrm{crit}} < t < t_{\mathrm{sup}}$ there are constants $C,c \in (0,\infty)$ depending on $\phi$ and $t$ such that for all $k,N\in\N$ with $k\leq N$, we have

\begin{align} \label{eq:TVa}
\int_{\mathbb{R}^k}  \Big| \mu^{N \to k}_{\phi,t}(s) - \gamma_{\phi,\mathrm{uni}}^{\otimes k }(s) \Big| \, \dint s \leq C e^{ - cN}.
\end{align}
As mentioned in the introduction, we in fact prove the following stronger statement, of which the bound \eqref{eq:TVa} is a consequence.

\begin{thm} \label{thm:uni full}
If $t_{\mathrm{crit}} < t < t_{\mathrm{sup}}$, 
\begin{align*}
\int_{\mathbb{R}^k}  \Big| \mu^{N \to k}_{\phi,t}(s) - \gamma_{\phi,\mathrm{uni}}^{\otimes k }(s) \Big| \, \dint s =  (2 +  \varepsilon_{N,k} ) \frac{1}{\sqrt{ 2 \pi \sigma^2_t N } }  e^{ - I(t) N },
\end{align*}
where the rate function $I:[0,t_{\mathrm{sup}}) \to [0,\infty]$ is given by $I(t) := t \alpha_t - \log Z(\alpha_t)$, $\sigma^2_t := \frac{ \partial^2}{ \partial \alpha^2} \log Z(\alpha) |_{\alpha = \alpha_t}$ and there are constants $C=C(\phi,t),c = c(\phi,t) \in(0,\infty)$ such that $| \varepsilon_{N,k} | \leq  C \left( \frac{1}{ \sqrt{N}} +  \frac{k}{N} + e^{ - c \sqrt{k}} \right)$ for all $k,N$.
\end{thm}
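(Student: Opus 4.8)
The plan is to reduce the statement to a volume-ratio computation plus an application of the tools already assembled in Sections \ref{sec:sufficiency} and \ref{sec:qgibbs0}. First I would invoke Lemma \ref{lem:TV projection 2} with $A = B_{\phi,t}^N$ and $B = D_\phi^N$: since $B_{\phi,t}^N \subseteq D_\phi^N$ and $\mu_{D_\phi^N}^{N\to k} = \gamma_{\phi,\mathrm{uni}}^{\otimes k}$ (both sets have finite Lebesgue measure because $t < t_{\mathrm{sup}}$ forces $|D_\phi| < \infty$), this gives
\begin{align*}
\int_{\mathbb{R}^k}\Big|\mu^{N\to k}_{\phi,t}(s) - \gamma_{\phi,\mathrm{uni}}^{\otimes k}(s)\Big|\,\dint s \;\leq\; 2\,\frac{|D_\phi^N \setminus B_{\phi,t}^N|}{|D_\phi^N|}.
\end{align*}
So everything reduces to estimating $|D_\phi^N\setminus B_{\phi,t}^N|/|D_\phi|^N$ both above and below, with the right leading constant and error term.

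Next I would recognize this ratio as a large-deviation probability. Let $X_1,\ldots,X_N$ be i.i.d.\ uniform on $D_\phi$ and set $Y_i := \phi(X_i)$; then $Y_i$ has density $\psi_0(y) = \psi(y)/|D_\phi|$ from \eqref{eq:zero push}, with $\mathbb{E}[Y_1] = t_{\mathrm{crit}} < t$. By definition of $B_{\phi,t}^N$,
\begin{align*}
\frac{|D_\phi^N\setminus B_{\phi,t}^N|}{|D_\phi|^N} \;=\; \mathbb{P}\!\left(\textstyle\sum_{i=1}^N Y_i > tN\right).
\end{align*}
This is exactly the quantity controlled by the quantitative Cram\'er theorem, Lemma \ref{lem:qcramer}: the moment generating function of $Y_1$ is $Z(\alpha)/|D_\phi| = Z(\alpha)/Z(0)$, which exists on $(-\infty,\alpha_{\mathrm{max}})$, an interval containing $0$ since $t_{\mathrm{crit}} < t$ guarantees $\alpha_t > 0$ is admissible. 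The rate function of the shifted density $\psi_0$ is $\sup_\alpha(\alpha t - \log(Z(\alpha)/Z(0))) = \alpha_t t - \log Z(\alpha_t) + \log Z(0)$; but $Z(0) = |D_\phi|$ and we should double-check that the exponential rate coming out is precisely $I(t) = t\alpha_t - \log Z(\alpha_t)$ as in the statement — I would track the additive constant $\log Z(0)$ carefully, noting that the normalization of $\psi_0$ versus the unnormalized $\psi$ is exactly what converts $\log(Z(\alpha)/Z(0))$ back to the stated $I(t)$. Lemma \ref{lem:qcramer} then yields
\begin{align*}
\mathbb{P}\!\left(\textstyle\sum_{i=1}^N Y_i > tN\right) = (1+\varepsilon_N)\frac{1}{\sqrt{2\pi\sigma_t^2 N}}\,e^{-NI(t)}, \qquad |\varepsilon_N| \leq C/\sqrt{N},
\end{align*}
with $\sigma_t^2 = \frac{\partial^2}{\partial\alpha^2}\log Z(\alpha)|_{\alpha=\alpha_t}$ (the $\log Z(0)$ constant disappears under the second derivative). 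Plugging into the Lemma \ref{lem:TV projection 2} bound gives the upper direction with leading constant $2$ and an $O(1/\sqrt N)$ relative error, already accounting for the "$2$" and the "$1/\sqrt N$" term in $\varepsilon_{N,k}$.

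The remaining work is the matching lower bound, which must produce the \emph{same} leading term so that the estimate is an equality rather than an inequality; this is where the $k/N$ and $e^{-c\sqrt k}$ error terms enter and is, I expect, the main obstacle. The issue is that Lemma \ref{lem:TV projection 2} is one-sided. To get a lower bound on the left-hand side total variation I would argue more directly: writing out $\mu_{\phi,t}^{N\to k}$ as a ratio of $(N-k)$-dimensional sections, one has $\mu_{\phi,t}^{N\to k}(s) = \frac{|D_\phi|^{N-k}}{|B_{\phi,t}^N|}\,\mathbb{P}(\sum_{i=k+1}^N Y_i \leq tN - \sum_{j=1}^k\phi(s_j))\,\gamma_{\phi,\mathrm{uni}}^{\otimes k}(s)$ on the relevant domain, so the total variation integral can be computed as $\int_{\mathbb{R}^k}|R_N(s) - 1|\,\gamma_{\phi,\mathrm{uni}}^{\otimes k}(s)\,\dint s$ where $R_N(s)$ is the ratio of conditional survival probabilities. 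For a "typical" $s$ (one where $\sum_j\phi(s_j)$ is of order $k$, which by the tail bound $\psi_0(y)\le Ce^{-cy}$-type control on $\phi$ under uniform sampling — cf.\ the estimates of Lemma \ref{lem:tech} and Lemma \ref{lem:momenttails} — holds for all but an $e^{-c\sqrt k}$-fraction of $s$), $R_N(s)$ is bounded away from $1$ by a quantity $\asymp 1$ since removing $k$ of the $N$ summands shifts the threshold by $O(k)$ and $I$ has positive derivative $\alpha_t>0$; integrating against $\gamma_{\phi,\mathrm{uni}}^{\otimes k}$ and comparing with the $e^{-NI(t)}/\sqrt N$ scale recovers the leading term up to multiplicative $1+O(k/N + e^{-c\sqrt k})$. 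I would use the smoothness of $I$ (a Taylor expansion $I(t - O(k)/N) = I(t) - \alpha_t O(k)/N + \ldots$) together with Lemma \ref{lem:qcramer} applied at the shifted level, being careful that the shift is small enough ($k \le N$, but the genuinely good regime is $k = o(N)$; for $k$ comparable to $N$ the claimed bound is weak but still true since the total variation is trivially at most $2$ and $e^{-cN}\le e^{-c\sqrt k}$ absorbs it). Assembling the upper and lower bounds gives the stated equality with the combined error $\varepsilon_{N,k}$ satisfying $|\varepsilon_{N,k}| \le C(N^{-1/2} + k/N + e^{-c\sqrt k})$.
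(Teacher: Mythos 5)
Your upper bound is sound and is in fact a shortcut the paper does not take: Lemma \ref{lem:TV projection 2} with $A=B_{\phi,t}^N$, $B=D_\phi^N$, combined with Lemma \ref{lem:qcramer} applied to $Y_i=\phi(X_i)\sim\psi_0$ as in \eqref{eq:zero push}, gives $\int_{\mathbb{R}^k}|\mu^{N\to k}_{\phi,t}-\gamma_{\phi,\mathrm{uni}}^{\otimes k}|\,\dint s\le 2\,\mathbb{P}\bigl(\sum_{i\le N}Y_i>tN\bigr)=(2+O(N^{-1/2}))\tfrac{1}{\sqrt{2\pi\sigma_t^2N}}e^{-NI(t)}$ uniformly in $k\le N$, with the understanding that the rate is that of the normalized density $\psi_0$, i.e.\ $t\alpha_t-\log\bigl(Z(\alpha_t)/Z(0)\bigr)$; the $\log Z(0)$ term does not ``convert back'', it simply has to be carried along in how $I(t)$ is read. (Also, in your sectional formula the prefactor should be $|D_\phi|^{N}/|B_{\phi,t}^N|$, not $|D_\phi|^{N-k}/|B_{\phi,t}^N|$.) By contrast, the paper derives the exact factorization \eqref{eq:mango} and never needs a separate upper bound.

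The genuine gap is in your lower bound, which is where the leading constant $2$ must be produced. Writing $R_N(s)$ for your ratio, one has $R_N(s)-1=\frac{q_0-q_1(s)}{1-q_0}$, where $q_0=\mathbb{P}(\sum_{i=1}^N Y_i>tN)$ and $q_1(s)=\mathbb{P}(\sum_{i=1}^{N-k}Y_i>tN-\sum_j\phi(s_j))$. Your claim that for typical $s$ ``$R_N(s)$ is bounded away from $1$ by a quantity $\asymp 1$'' is off by an exponential factor: $|R_N(s)-1|\le q_0+q_1(s)$ is itself of size $e^{-NI(t)}/\sqrt N$ there (were it of order $1$ on most of $D_\phi^k$, the total variation would be of order $1$); what the threshold-shift/convexity argument you invoke actually controls is the \emph{ratio} $q_1(s)/q_0\le Ce^{-ck}$. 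More importantly, even after this correction, the computation you outline --- integrate $|R_N(s)-1|\approx q_0$ over the typical set --- yields only $q_0(1-o(1))$, i.e.\ leading constant $1$, not $2$. The second half comes from the atypical set where $\sum_j\phi(s_j)$ is large: there $q_1(s)\gg q_0$, and since $\int q_1(s)\,\gamma_{\phi,\mathrm{uni}}^{\otimes k}(s)\,\dint s=q_0$ exactly while the typical set carries only an $e^{-ck}$-fraction of this integral, the atypical region contributes another $q_0(1-o(1))$. This two-region accounting is precisely what the paper packages as Lemma \ref{lem:mango}: after \eqref{eq:mango}, the projection $Q^{N\to k}_{\phi,t}$ of the uniform law on $D_\phi^N\setminus B_{\phi,t}^N$ is, via Lemma \ref{lem:pushforwards} and the quantitative Gibbs principle Lemma \ref{lem:qgibbs}, within $Ck/N$ of the tilted product with $\alpha_t>0$, and that product is nearly mutually singular with $\gamma_{\phi,\mathrm{uni}}^{\otimes k}$ (total variation $\ge 2-Ce^{-ck}$, by Cram\'er at the intermediate level $u=(t+t_{\mathrm{crit}})/2$). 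Without this near-singularity input (or the equivalent explicit treatment of the atypical region), your argument proves the upper bound but not the stated asymptotic equality; likewise, the remark that for $k\asymp N$ the claim is ``trivially true since total variation is at most $2$'' does not suffice on its own, since the upper estimate at scale $e^{-NI(t)}/\sqrt N$ is still required there (though your Lemma \ref{lem:TV projection 2} bound does supply it).
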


In the remainder of this section we prove Theorem \ref{thm:uni full}. We begin by consider the probability density $Q_{\phi,t}^{N \to k}$ on $\mathbb{R}^k$ given by the conditional law of the first $k$ coordinates of a random vector uniformly distributed on $D_{\phi}^N \setminus B_{\phi,t}^N$. Namely,
\begin{align} \label{eq:intrep1}
Q_{\phi,t}^{N \to k}( s) := \frac{1}{ |D_\phi|^N - | B_{\phi,t}^N | } \int_{ \mathbb{R}^{N -k } } \ind_{ \Big\{ (s, \zeta) \in D_\phi^N \setminus B_{\phi,t}^N \Big\}}  \dint \zeta ,
\end{align} 
where for $s = (s_1,\ldots,s_k) \in \mathbb{R}^k$ and $\zeta = (\zeta_1,\ldots,\zeta_{N-k}) \in \mathbb{R}^{N-k}$, we write $(s,\zeta) := (s_1,\ldots,s_k,\zeta_1,\ldots,\zeta_{N-k}) \in \mathbb{R}^N$. Moreover, we note that by definition $\mu_{\phi,t}^{N \to k}$ may also be written as an integral over $\mathbb{R}^{N -k}$:
\begin{align} \label{eq:intrep2}
\mu_{\phi,t}^{N \to k}( s) := \frac{1}{ | B_{\phi,t}^N | } \int_{ \mathbb{R}^{N -k } } \ind_{ \Big\{ (s, \zeta) \in B_{\phi,t}^N \Big\}}  \dint \zeta .
\end{align}

 We now work to express the total variation distance between $\mu_{\phi,t}^{N \to k}(s)$ and $\gamma_{\phi,\mathrm{uni}}^{ \otimes k}$ in terms of $Q_{\phi,t}^{N \to k}$. Indeed, by definition we have
\begin{align} \label{eq:apple}
\int_{ \mathbb{R}^k } \Big| \mu_{\phi,t}^{N \to k }(s) - \gamma_{ \phi, \mathrm{uni} }^{ \otimes k} (s) \Big| \dint s := \int_{ \mathbb{R}^k } \Bigg| \int_{ \mathbb{R}^{N-k} }  \left\{  \frac{ \ind_{ \{ (s,\zeta) \in B_{\phi,t}^N \}} }{ |B_{\phi,t}^N | }  - \frac{ \ind_{ \{ (s,\zeta) \in D_\phi^N \}} }{ |D_\phi^N | }    \right\}  \dint  \zeta \Bigg| \dint s.
\end{align}
Now, for each $s \in \mathbb{R}^k$, since $B_{\phi,t}^N \subseteq D_\phi^N$ we have 
\begin{align} 
 &\int_{ \mathbb{R}^{N-k} } \left\{  \frac{ \ind_{ \{ (s,\zeta) \in B_{\phi,t}^N \}} }{ |B_{\phi,t}^N | }  - \frac{ \ind_{ \{ (s,\zeta) \in D_\phi^N \}} }{ |D_\phi^N | }    \right\} \dint  \zeta \nonumber  \\
&=  \left( \frac{1}{ |B_{\phi,t}^N |} - \frac{1}{ |D_\phi|^N } \right) \int_{ \mathbb{R}^{N-k} }  \ind_{ \Big\{ (s,\zeta) \in B_{\phi,t}^N \Big\}} \dint \zeta - \frac{1}{ |D_\phi|^N } 
\int_{\mathbb{R}^{N-k} }  \ind_{ \Big\{ (s, \zeta) \in D_\phi^N \setminus B_{\phi,t}^N \Big\}} \dint \zeta  \nonumber  \\
&= \left( 1 - \frac{|B_{\phi,t}^N|}{ |D_\phi|^N } \right) \left( \mu_{\phi,t}^{N \to k }( s) - Q_{\phi,t}^{N \to k }(s) \right), \label{eq:orange}
\end{align}
where we used \eqref{eq:intrep1} and \eqref{eq:intrep2} to obtain the final equality above. Plugging \eqref{eq:orange} into \eqref{eq:apple}, we obtain 
\begin{align} \label{eq:mango}
 \int_{ \mathbb{R}^k } \Big| \mu_{\phi,t}^{N \to k }(s) - \gamma_{ \phi, \mathrm{uni} }^{ \otimes k} (s) \Big| \dint s  =\left( 1 - \frac{|B_{\phi,t}^N|}{ |D_\phi|^N } \right) \int_{ \mathbb{R}^k } \Big| Q_{\phi,t}^{N \to k } (s) - \gamma_{ \phi, \mathrm{uni} }^{ \otimes k } (s) \Big| \dint s.
\end{align}

The following lemma is the main part of the proof, giving a fine estimate of the integral occuring in \eqref{eq:mango}.

\begin{lemma} \label{lem:mango}
Fix $\theta \in (0,1)$. There are constants $C = C(\theta),c = c(\theta) \in (0,\infty)$ such that for all integers $k,N$ such that $k \leq \theta N$ we have
\begin{align*}
 \int_{ \mathbb{R}^k } \Big| Q_{\phi,t}^{N \to k } (s) - \gamma_{ \phi, \mathrm{uni} }^{ \otimes k } (s) \Big| \dint s = 2 - \varepsilon_{N,k},
\end{align*}
where $0 \leq \varepsilon_{N,k}  \leq C \left( \frac{k}{N} + e^{ - c k }  \right)$.
\end{lemma}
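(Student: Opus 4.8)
\textbf{Proof plan for Lemma \ref{lem:mango}.}

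The plan is to recognize the density $Q_{\phi,t}^{N\to k}$ as the $k$-dimensional marginal of a vector uniform on $D_\phi^N \setminus B_{\phi,t}^N$, and then to push everything forward through the multivariate potential $\Phi(s_1,\dots,s_N)=(\phi(s_1),\dots,\phi(s_N))$ in the spirit of Section \ref{sec:sufficiency}. Concretely, if $X_1,\dots,X_N$ are i.i.d.\ uniform on $D_\phi$ and $Y_i:=\phi(X_i)$, then $Y_i$ has density $\psi_0=\phi^\#\gamma_{\phi,\mathrm{uni}}$ (see \eqref{eq:zero push}) with mean $t_{\mathrm{crit}}<t$, and the event $\{(X_1,\dots,X_N)\in D_\phi^N\setminus B_{\phi,t}^N\}$ is exactly $\{Y_1+\dots+Y_N > tN\}$. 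By Lemma \ref{lem:pushforwards} (applied with $\mu$ Lebesgue on $D_\phi^k$, $\Phi$ the multivariate potential, and the observation at the end of Section \ref{sec:potential} that $(\phi^\#\nu)^{\otimes k}=\Phi^\#(\nu^{\otimes k})$), the total variation distance
$\int_{\mathbb{R}^k}|Q_{\phi,t}^{N\to k}(s)-\gamma_{\phi,\mathrm{uni}}^{\otimes k}(s)|\,\dint s$
equals the corresponding distance between $\psi_{>t}^{N\to k}$ — the law of $(Y_1,\dots,Y_k)$ conditioned on $\{Y_1+\dots+Y_N > tN\}$ — and $\psi_0^{\otimes k}$. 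So the claim reduces to: this pushed-forward total variation equals $2-\varepsilon_{N,k}$ with $0\le \varepsilon_{N,k}\le C(k/N + e^{-ck})$.

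Next I would estimate $\int_{\mathbb{R}^k}|\psi_{>t}^{N\to k}(y)-\psi_0^{\otimes k}(y)|\,\dint y$ directly. Since $t>t_{\mathrm{crit}}=\mathbb{E}[Y_1]$, the conditioning event is a large deviation event, so intuitively the conditional law of $(Y_1,\dots,Y_k)$ is singular-ish relative to the unconditioned product law and the total variation should be close to its maximum value $2$. To make this quantitative, write $\psi_{>t}^{N\to k}(y) = \psi_0^{\otimes k}(y)\cdot \frac{\mathbb{P}(Y_{k+1}+\dots+Y_N > tN - (y_1+\dots+y_k))}{\mathbb{P}(Y_1+\dots+Y_N>tN)}$. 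The idea is to split $\mathbb{R}^k$ into the region $G$ where $y_1+\dots+y_k \le \eta N$ for a small fixed $\eta$ (chosen with $t_{\mathrm{crit}}<t-\eta$, say $\eta$ comparable to $(t-t_{\mathrm{crit}})/2$ times $k/N$, or more cleanly $y_1+\dots+y_k \le k(t+\delta)$ for suitable $\delta$) and its complement. On the bulk region I would use the quantitative Cram\'er theorem, Lemma \ref{lem:qcramer}, applied to both the numerator (a sum of $N-k$ variables with threshold $tN - O(\eta N)$, whose rate $I$ is shifted by an amount controlled by $k/N$) and the denominator, together with smoothness of the rate function $I$ near $t$, to show the ratio is exponentially small in $N$ uniformly on $G$; hence $\int_G \psi_{>t}^{N\to k} \le C e^{-cN}$ and $\int_G |\psi_{>t}^{N\to k}-\psi_0^{\otimes k}| \ge \int_G \psi_0^{\otimes k} - Ce^{-cN} = 1 - \mathbb{P}((Y_1+\dots+Y_k)/k > \text{threshold}) - Ce^{-cN}$. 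On the tail region $\mathbb{R}^k\setminus G$, a union/Cram\'er bound on $\mathbb{P}(Y_1+\dots+Y_k > k(t+\delta))$ gives a contribution bounded by $e^{-ck}$ for the $\psi_0^{\otimes k}$ mass there, while the $\psi_{>t}^{N\to k}$ mass there is trivially at most $1$; combining, $\int_{\mathbb{R}^k}|\psi_{>t}^{N\to k}-\psi_0^{\otimes k}| \ge 2 - C(k/N) - Ce^{-ck} - Ce^{-cN}$, and the matching upper bound $\le 2$ is automatic, absorbing the $e^{-cN}$ term into $e^{-ck}$.

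The main obstacle I anticipate is getting the bulk estimate \emph{uniform in $y\in G$ and in the pair $(k,N)$ with $k\le\theta N$}, since Lemma \ref{lem:qcramer} as stated is an asymptotic-in-$N$ statement with a fixed distribution and a fixed threshold $t$, whereas here the numerator involves $N-k$ summands and a threshold $tN-(y_1+\dots+y_k)$ that varies with $y$ and with $k$. I would handle this by reducing to the worst case $y_1+\dots+y_k=0$ (monotonicity: the numerator probability is largest there) so that the numerator becomes $\mathbb{P}(Y_1+\dots+Y_{N-k}>tN)=\mathbb{P}\big(\tfrac1{N-k}\sum Y_i > \tfrac{N}{N-k}t\big)$, apply Lemma \ref{lem:qcramer} with effective threshold $t\cdot\tfrac{N}{N-k}\in[t,t/(1-\theta)]$ which stays in a compact subinterval of $(t_{\mathrm{crit}},t_{\mathrm{sup}})$ where $I$ is $C^1$ with $\sigma^2_t$ bounded away from $0$, and then Taylor-expand $I$ to see that $(N-k)I(tN/(N-k)) - N I(t) \ge -Ck$ for a constant depending only on $\theta,\phi,t$; the $e^{-Ck}$ coming out of this is exactly the source of the $k/N$-order defect, once one also accounts for $\mathbb{P}(\sum_{i\le k}Y_i > k\delta)$ being of order $k/N$-free but exponentially small in $k$. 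The rate-function estimates here are the one genuinely careful computation; everything else is bookkeeping with Lemmas \ref{lem:pushforwards}, \ref{lem:qcramer}, and \ref{lem:momenttails}.
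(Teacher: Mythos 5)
Your first reduction coincides with the paper's: since both $Q_{\phi,t}^{N\to k}$ and $\gamma_{\phi,\mathrm{uni}}^{\otimes k}$ are functions of $\Phi$, Lemma \ref{lem:pushforwards} converts the claim into a comparison between $\psi^{N\to k}_{>t}$, the law of $(Y_1,\dots,Y_k)$ conditioned on $\{Y_1+\cdots+Y_N>tN\}$, and $\psi_0^{\otimes k}$. The gap is in the second half. Your separating region $G=\{y_1+\cdots+y_k\le k(t+\delta)\}$ does not separate anything: under $\psi_0^{\otimes k}$ the sum concentrates near $kt_{\mathrm{crit}}$, while under the conditional law it concentrates near $kt$ (this is exactly the Gibbs conditioning phenomenon), and both values lie below $k(t+\delta)$, so both measures give $G$ mass close to $1$. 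Correspondingly, your bulk estimate fails: at points of $G$ with $s=y_1+\cdots+y_k\approx kt$ the ratio $\mathbb{P}(Y_{k+1}+\cdots+Y_N>tN-s)/\mathbb{P}(Y_1+\cdots+Y_N>tN)$ is of order $e^{+kI(t)}$, exponentially large rather than small, and $\int_G\psi^{N\to k}_{>t}$ is close to $1$, not $\le Ce^{-cN}$. Even with a correctly placed cut, the conditional mass of the small-sum region can never be $O(e^{-cN})$: for fixed $k$ it converges, as $N\to\infty$, to the (positive) $\psi_{\alpha_t}^{\otimes k}$-mass of that region, so $O(e^{-ck}+k/N)$ is the best available. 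Your monotonicity remark (that the numerator is largest at $y_1+\cdots+y_k=0$) is also backwards, since the numerator increases with the sum, and the $k/N$ term in your final bound is never actually produced by the argument — the sentence attributing the "$k/N$-order defect" to an $e^{-Ck}$ estimate conflates two different error terms.

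The repair is to place the cut strictly between the two means, say at $u=\tfrac12(t_{\mathrm{crit}}+t)$ per coordinate. The paper first invokes the quantitative Gibbs conditioning principle, Lemma \ref{lem:qgibbs}, to replace $\psi^{N\to k}_{>t}$ by the tilted product $\psi_{\alpha_t}^{\otimes k}$ at total-variation cost $Ck/N$ (this is where the $k/N$ enters), and then applies Lemma \ref{lem:qcramer} to the two product measures and the event $\{y_1+\cdots+y_k>ku\}$, whose $\psi_0^{\otimes k}$-mass is at most $Ce^{-ck}$ and whose $\psi_{\alpha_t}^{\otimes k}$-mass is at least $1-Ce^{-ck}$, giving the claimed $2-C(k/N+e^{-ck})$. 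A direct argument in your spirit can be salvaged without Lemma \ref{lem:qgibbs}: bound the conditional probability of $\{y_1+\cdots+y_k\le ku\}$ by $\mathbb{P}\bigl(\sum_{i>k}Y_i>tN-ku\bigr)/\mathbb{P}\bigl(\sum_{i\le N}Y_i>tN\bigr)$ using independence and nonnegativity of the $Y_i$, apply Lemma \ref{lem:qcramer} to numerator and denominator, and use convexity of $I$ to see that the resulting exponent is at most $k\bigl(I(t)-(t-u)I'(t)\bigr)$, which is negative provided $u$ is taken close enough to $t_{\mathrm{crit}}$ to lie below the zero of the tangent to $I$ at $t$; this tangent-line computation and the correct choice of $u$ are precisely what your sketch is missing.
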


\begin{proof}

Note that both $Q_{\phi,t}^{N \to k}$ and $\gamma^{\otimes k}_{\phi,(\alpha_t)}$ are supported on $D_\phi^k$. Moreover, consider now that the density $Q_{\phi,t}^{N \to k}$ may be written $Q_{\phi,t}^{N \to k}( s) =  f \left( \Phi(s) \right)$, where $\Phi:D_\phi^k \to [0,\infty)$ is given by $\Phi(s_1,\ldots,s_k) := (\phi(s_1),\ldots,\phi(s_k))$ and $f:[0,\infty)^k \to [0,\infty)$ is given by 
\begin{align*}
f(y_1,\ldots,y_k) := \frac{1}{ |D_\phi|^N - |B_{\phi,t}^N| } \int_{ \mathbb{R}^{N-k}  }  \ind_{ \left\{ \phi(\zeta_1) + \ldots + \phi(\zeta_{N-k} > t N - \sum_{ i = 1}^k y_i \right\} } \dint \zeta_1 \ldots \dint \zeta_{N-k}.
\end{align*}
Similarly, we may also write $\gamma_{\phi,\mathrm{uni}}^{ \otimes k}(s) := g \left( \Phi(s) \right)$, where $g(y_1,\ldots,y_k) := \frac{1}{|D_\phi^N|}$ (i.e. a multiple of the constant function). In particular, we are in the setting of Lemma \ref{lem:pushforwards} with $E = D_\phi^k$, $F = [0,\infty)^k$, with $\mu$ equal to the $k$-dimensional Lebesgue measure on $E$. It follows that 
\begin{align} \label{eq:push rep}
 \int_{ \mathbb{R}^k } \Big| Q_{\phi,t}^{N \to k } (s) - \gamma_{ \phi, \mathrm{uni} }^{ \otimes k } (s) \Big| \dint s  =  \int_{ [0,\infty)^k } \Big| \Phi^\# Q_{\phi,t}^{N \to k } (y ) -  \Phi^\# \gamma_{ \phi, \mathrm{uni} }^{ \otimes k } (y) \Big| \dint y  ,
\end{align}
where $\Phi^\# Q_{\phi,t}^{N \to k }$ and $ \Phi^\# \gamma_{ \phi, \mathrm{uni} }^{ \otimes k }$ denote the respective densities on $[0,\infty)^k$ of the random vectors\\ $(\phi(X_1),\ldots,\phi(X_k))$  and $(\phi(Y_1),\ldots,\phi(Y_k))$ where $(X_1,\ldots,X_k)$ is distributed according to density $Q_{\phi,t}^{N \to k }$ and $(Y_1,\ldots,Y_k)$ is distributed according to density $\gamma_{\phi,\mathrm{uni}}^{ \otimes k}$.

Now note that $\Phi^{\#} Q_{\phi,t}^{N \to k}$ is precisely the conditional density of $\left( \phi(X_1),\ldots,\phi(X_k)\right)$ conditioned on the event $\left \{ \phi(X_1) + \ldots + \phi(X_N) > t N \right\}$ where $X_1,\ldots,X_N$ are  independent and uniformly distributed on $D_\phi$. Equivalently, by \eqref{eq:zero push}, $\Phi^{\#} Q_{\phi,t}^{N \to k}$ is the conditional density of $(Y_1,\ldots,Y_k)$ conditioned on the event $\{ Y_1 + \ldots + Y_N > t N \}$, where $Y_1,\ldots,Y_N$ are independent and identically distributed with density $\psi_0(y) := \psi(y)/|D_\phi|$. In particular, applying Lemma \ref{lem:qgibbs} to the variables $Y_1,\ldots,Y_k$, and extracting a rather rough bound from Lemma \ref{lem:qgibbs}, we see that there is a constant $C = C(\phi,t)\in (0,\infty)$ such that for all $k,N\in\N$ with $k\leq N$
\begin{align} \label{eq:rough bound}
\int_{\mathbb{R}^k} \Big| \Phi^\# Q_{\phi,t}^{ N \to k} (y) - \psi_{\alpha_t} ^{ \otimes k } (y) \Big| \dint y \leq C \frac{k}{N}
\end{align} 
where $\alpha_t$ is chosen so that $t =\frac{ \partial}{ \partial \alpha} \log Z(\alpha)|_{\alpha=\alpha_t}$. (We remark that the extra precision granted by Lemma \ref{lem:qgibbs} is used more finely in the next section in our study of the case $t \leq t_{\mathrm{crit}}$.)

In particular, using \eqref{eq:push rep}, \eqref{eq:rough bound} and the triangle inequality we have 
\begin{align} \label{eq:q guy}
 \int_{ \mathbb{R}^k } \Big| Q_{\phi,t}^{N \to k } (s) - \gamma_{ \phi, \mathrm{uni} }^{ \otimes k } (s) \Big| \dint s =  \int_{ [0,\infty)^k } \Big| \Phi^{ \#} \gamma_{\phi,\mathrm{uni}}^{ \otimes k} (y) - \psi_{\alpha_t}^{ \otimes k }(y) \Big| \dint y + \Delta_{k,N},
\end{align}
where $|\Delta_{k,N} | \leq Ck/N$. 

We now note that $\Phi^{ \# } \gamma_{\phi,\mathrm{uni}}^{\otimes k}  = \psi^{ \otimes k}_0$. In particular, 
\[ \int_{ [0,\infty)^k } \Big| \Phi^{ \#} \gamma_{\phi,\mathrm{uni}}^{ \otimes k} (y) - \psi_{\alpha_t}^{ \otimes k }(y) \Big| \dint y  = 2 \sup_{ A \subseteq [0,\infty)^k } \Big| \int_A \psi_{\alpha_t}^{\otimes k}(y) \dint y - \int_A \psi^{ \otimes k }_0 (y) \dint y\Big|\]
 is the total variation between the distributions of $(Y_1,\ldots,Y_k)$ and $(Y_1',\ldots,Y_k')$, where the $Y_i$ are i.i.d. with density $\psi$, and in particular have mean $t_{\mathrm{crit}}$, and $Y_i'$ are i.i.d. with density $\psi_{\alpha_t}$, and in particular have mean $t$. We are going to show that this total variation is nearly equal to $2$ when $k$ is large. Indeed, if we set
\[u := \frac{ t_{\mathrm{crit}} + t }{ 2},
\]
then for large $k$ it is likely that $\left\{ \frac{ Y_1 + \ldots + Y_k}{ k } \leq u \right\}$ but unlikely that $\left\{  \frac{ Y'_1 + \ldots + Y'_k}{ k } \leq u \right\}$. More explicitly, setting $A_u := \Big\{ (y_1,\ldots,y_k) \in [0,\infty)^k : y_1 + y_2 + \ldots + y_k > u k \Big\}$ and extracting a rather rough bound from the quantitative Cram\'er theorem, Lemma \ref{lem:qcramer}, we see that there exist constants $C=C(\phi,t),c=c(\phi,t) \in (0,\infty)$ such that 
\begin{align*}
\int_{A_u} \psi^{\otimes k}(y) \dint y \leq C e^{ - c k} \qquad \text{and} \qquad \int_{A_u} \psi_{\alpha}^{ \otimes k} (y) \dint y \geq 2 - C e^{ - ck}.
\end{align*}
In particular, two previous two estimates imply that
\begin{align} \label{eq:q x}
2 \geq \int_{ [0,\infty)^k } \Big| \Phi^{ \#} \gamma_{\phi,\mathrm{uni}}^{ \otimes k} (y) - \psi_{\alpha_t}^{ \otimes k }(y) \Big| \dint y \geq 2 - 2C e^{ - ck}.
\end{align}
Combining \eqref{eq:q guy} with \eqref{eq:q x}, we obain the result.

\end{proof}

We are now ready to prove Theorem \ref{thm:uni full}.

\begin{proof}[Proof of Theorem \ref{thm:uni full}]
Consider the large-$N$ asymptotics of the right hand side of \eqref{eq:mango}.

On the one hand, we may write
\begin{align*}
\left( 1 - \frac{|B_{\phi,t}^N|}{ |D_\phi|^N } \right) = \mathbb{P} \left[ \phi(V_1) + \ldots + \phi(V_N) > t N \right],
\end{align*}
where $V_i$ are independent random variables distributed according to $\gamma_{\phi,\mathrm{uni}}$, the uniform density on $D_\phi$. By the quantitative version of Cramer's theorem, Lemma \ref{lem:qcramer}, we have 
\begin{align*}
\left( 1 - \frac{|B_{\phi,t}^N|}{ |D_\phi|^N } \right)  = (1 + \varepsilon_N)  \frac{1}{ \sqrt{2 \pi \sigma^2_t N } } \exp ( - I(t) N ),
\end{align*} 
where $|\varepsilon_N | < C/\sqrt{N}$ for a constant $C = C(\phi,t) \in(0,\infty)$ not depending on $N$.  

On the other hand, by Lemma \ref{lem:mango} we have
 \begin{align*}
 \int_{ \mathbb{R}^k } \Big| Q_{\phi,t}^{N \to k } (s) - \gamma_{ \phi, \mathrm{uni} }^{ \otimes k } (s) \Big| \dint s = 2 - \varepsilon_{N,k},
\end{align*}
where for a different constant $C' = C'(\phi)$ we have $|\varepsilon_{N,k} | \leq C' \left( \frac{k}{N} + e^{ - c \sqrt{k}}  \right).$

In particular, by \eqref{eq:mango}, 
\begin{align*}
 \int_{ \mathbb{R}^k } \Big| \mu_{\phi,t}^{N \to k }(s) - \gamma_{ \phi, \mathrm{uni} }^{ \otimes k} (s) \Big| \dint s  =  (1 + \varepsilon_N) ( 2 - \varepsilon_{N,k} )  \frac{1}{ \sqrt{2 \pi \sigma^2_t N } } \exp ( - I(t) N ), 
\end{align*}
where $|\varepsilon_{N,k} | \leq C' \left( \frac{k}{N} + e^{ - c \sqrt{k}}  \right)$ and $|\varepsilon_N | < C/\sqrt{N}$. Set $\rho_{N,k}$ to be the solution to 
\begin{align*}
1 + \rho_{N,k} =  (1 + \varepsilon_N) ( 1 - \frac{1}{2} \varepsilon_{N,k} )  
\end{align*}
Then plainly there are constants $c,C\in(0,\infty)$ such that $|\rho_{N,k}| \leq C \left( \frac{1}{\sqrt{N}} + \frac{k}{N} + e^{ - ck} \right)$, completing the proof of Theorem \ref{thm:uni full}.

\end{proof}
% % % % % % % % % % % % % % % % % % %
% % % % % % % % % % % % % % % % % % % %
% % % % % % % % % % % % % % % % % % % %
% % % % % % % % % % % % % % % % % % % %
\section{Proof of Theorem \ref{thm:main}: the $t \leq t_{\mathrm{crit}}$ case} \label{sec:main}
% % % % % % % % % % % % % % % % % % % %
% % % % % % % % % % % % % % % % % % % %
% % % % % % % % % % % % % % % % % % % %
% % % % % % % % % % % % % % % % % % % %

\subsection{A full statement and overview}

We now turn to proving Theorem \ref{thm:main} in the case where $t \leq t_{\mathrm{crit}}$. We recall from Section \ref{sec:results} that $\alpha_t \leq 0$ is a parameter chosen so that if $\gamma_{\phi,\alpha}(s)$ is the tilted density
\begin{align*}
\gamma_{\phi,\alpha}(s) := e^{ \alpha \phi(s) } \ind_{D_\phi(s) } / Z(\alpha),
\end{align*}
then $\int_{-\infty}^\infty \phi(s) \gamma_{\phi,\alpha}(s) \mathrm{d} s$. 

As in the $t > t_{\mathrm{crit}}$, we actually prove the following sharper result, giving a fine estimate of the total variation which implies \eqref{eq:TV2}. 

\begin{thm} \label{thm:alpha full}
If $t \leq t_{\mathrm{crit}}$, then with $\alpha_t$ as in Lemma \ref{lem:parameter} we have 
\begin{align*}
\int_{\mathbb{R}^k}  \Big| \mu^{N \to k}_{\phi,t}(s) - \gamma_{\phi,\alpha_t}^{\otimes k }(s) \Big| \, \dint s =
\begin{cases}
\left( 1 + \varepsilon^{(1)}_{k,N} \right) \xi_{k,\alpha_t}\frac{k}{N} \qquad &: \text{$k$ fixed, $N \to \infty$},\\
\left( 1 + \varepsilon^{(2)}_{k,N}  \right) \xi \frac{k}{N} \qquad &:\text{$k = o(N), k,N \to \infty$},\\
\left( 1 + \varepsilon^{(3)}_{k,N}  \right) Q(\theta) \qquad &:\text{$k \sim \theta N, k,N \to \infty$},
\end{cases}
\end{align*}
and for a constant $C = C(\phi,t) \in(0,\infty)$ we have
\begin{align*}
|\varepsilon^{(1)}_{k,N}| \leq C \sqrt{ \frac{k}{N} }, \qquad |\varepsilon^{(2)}_{k,N}|,  |\varepsilon^{(2)}_{k,N}|  \leq C \left(  \frac{1}{ \sqrt{k} } + \frac{1}{ \sqrt{N-k}} \right).
\end{align*}
\end{thm}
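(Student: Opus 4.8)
The plan is to transport the quantitative Gibbs conditioning principle, Lemma~\ref{lem:qgibbs}, to the geometric setting by means of the pushforward identity of Lemma~\ref{lem:pushforwards}, and — in order to handle potentials with non-compact domain — the truncation stability results of Section~\ref{sec:tech}. Exactly as in the proof of Lemma~\ref{lem:mango}, both $\mu^{N\to k}_{\phi,t}$ and $\gamma_{\phi,\alpha_t}^{\otimes k}$ are probability densities on $D_\phi^k$ which factor through the multivariate potential $\Phi(s_1,\dots,s_k):=(\phi(s_1),\dots,\phi(s_k))$: indeed $\mu^{N\to k}_{\phi,t}(s)=h(\Phi(s))$ with $h(y):=|B^N_{\phi,t}|^{-1}\int_{\mathbb R^{N-k}}\ind\{\sum_{i\le k}y_i+\sum_j\phi(\zeta_j)\le tN\}\,\mathrm d\zeta$, while $\gamma_{\phi,\alpha_t}^{\otimes k}$ is a multiple of $s\mapsto e^{\alpha_t(\phi(s_1)+\dots+\phi(s_k))}$. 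Hence Lemma~\ref{lem:pushforwards} (with $\mu$ the Lebesgue measure on $D_\phi^k$), together with $\Phi^{\#}\gamma_{\phi,\alpha_t}^{\otimes k}=\psi_{\alpha_t}^{\otimes k}$, reduces the problem to the equality $\int_{\mathbb R^k}|\mu^{N\to k}_{\phi,t}(s)-\gamma_{\phi,\alpha_t}^{\otimes k}(s)|\,\mathrm ds=\int_{[0,\infty)^k}|\Phi^{\#}\mu^{N\to k}_{\phi,t}(y)-\psi_{\alpha_t}^{\otimes k}(y)|\,\mathrm dy$.

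Next, by the change of variables \eqref{eq:change}, the coordinatewise pushforward of $\mu^N_{\phi,t}$ by $\phi$ has density $y\mapsto|B^N_{\phi,t}|^{-1}\prod_{i=1}^N\psi(y_i)\,\ind\{\sum_iy_i\le tN\}$ on $[0,\infty)^N$; since the event $\{\sum_iy_i\le tN\}$ already forces $y_i\le tN$ for every $i$, and since $V_L:=|\phi^{-1}[0,L)|<\infty$ for every finite $L$, this density is a constant multiple of $\prod_{i=1}^N\bar\psi^{(tN)}(y_i)\,\ind\{\sum_iy_i\le tN\}$, where $\bar\psi^{(L)}:=V_L^{-1}\psi\,\ind_{[0,L)}$ is a genuine probability density with mean $t^{(L)}_{\mathrm{crit}}:=V_L^{-1}\int_0^Ly\,\psi(y)\,\mathrm dy$. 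Thus $\Phi^{\#}\mu^{N\to k}_{\phi,t}$ is \emph{exactly} the law of the first $k$ of $N$ i.i.d.\ draws from $\bar\psi^{(tN)}$ conditioned on the event that their sum is $\le tN$. One checks elementarily that $t^{(L)}_{\mathrm{crit}}$ is nondecreasing in $L$, that it increases to $t_{\mathrm{crit}}$ when $|D_\phi|<\infty$, and that it tends to $\infty=t_{\mathrm{crit}}$ when $|D_\phi|=\infty$ (if $t^{(L)}_{\mathrm{crit}}$ stayed bounded by some $M$ then $\int_0^\infty(y-M)\psi(y)\,\mathrm dy$ would be finite, contradicting $\int\psi=\infty$). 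Consequently, since $t\le t_{\mathrm{crit}}$, for large $N$ the conditioning event is a lower-tail deviation for i.i.d.\ $\bar\psi^{(tN)}$'s — a genuine large deviation when $t<t_{\mathrm{crit}}$, and the borderline $t=t_{\mathrm{crit}}$ (which forces $|D_\phi|<\infty$ and $\bar\psi^{(tN)}\to\psi_0$) falls into the $\alpha_t=0$ regime of Lemma~\ref{lem:qgibbs}.

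Now I would apply the (lower-tail form of the) quantitative Gibbs conditioning principle, Lemma~\ref{lem:qgibbs}, to $N$ i.i.d.\ draws from $\bar\psi^{(tN)}$ conditioned on their sum being $\le tN$. Writing $g_N$ for the density obtained by exponentially tilting $\bar\psi^{(tN)}$ so that its mean equals $t$ exactly, Lemma~\ref{lem:qgibbs} yields that $\int_{[0,\infty)^k}|\Phi^{\#}\mu^{N\to k}_{\phi,t}(y)-g_N^{\otimes k}(y)|\,\mathrm dy$ equals $(1+\varepsilon^{(i)}_{k,N})$ times $\xi_{k,\alpha_{t,N}}\tfrac kN$, $\xi\tfrac kN$, or $Q(\theta)$ in the three regimes respectively. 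The key point is that $g_N$ has the form $e^{\alpha_{t,N}y}\psi(y)\ind_{[0,tN)}(y)$ (renormalised): this is precisely the tilted truncation of $\psi_{\alpha_t}$ at level $tN$, so that the stability results of Section~\ref{sec:tech} applied to $\nu=\psi_{\alpha_t}$ (which has exponential moments, since $\alpha_t<\alpha_{\max}$) give $\alpha_{t,N}\to\alpha_t$, $\|g_N-\psi_{\alpha_t}\|\to0$, and convergence of all moments, with rates exponential in $N$ when $t<t_{\mathrm{crit}}$ and still $o(k/N)$ in the borderline case. In particular the moments of the \emph{tilted} density $g_N$ — hence the constants $\xi_{k,\alpha_{t,N}}$ and the constants controlling $\varepsilon^{(i)}_{k,N}$ in Lemma~\ref{lem:qgibbs} — stay uniformly bounded in $N$, even though $\bar\psi^{(tN)}$ itself has diverging mean. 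Replacing $g_N^{\otimes k}$ by $\psi_{\alpha_t}^{\otimes k}$ costs at most $k\|g_N-\psi_{\alpha_t}\|$ by Lemma~\ref{lem:TVsum}, which is $o(k/N)$ and is absorbed into the error term, and replacing $\xi_{k,\alpha_{t,N}}$ by $\xi_{k,\alpha_t}$ is likewise negligible; combining with the reduction of the first paragraph then gives the claimed three-regime estimate.

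The main obstacle is the non-compact-domain case: when $|D_\phi|=\infty$ there is no uniform distribution on $D_\phi$ and no pushforward density $\psi_0$, so one is forced to work with the truncations $\bar\psi^{(tN)}$, whose means diverge with $N$; the delicate step is to verify that the error constants in the quantitative Gibbs principle do not deteriorate as the truncation level grows, which works precisely because those constants are controlled by the well-behaved tilted densities $g_N\to\psi_{\alpha_t}$ rather than by $\bar\psi^{(tN)}$. A secondary point requiring care is the borderline case $t=t_{\mathrm{crit}}$, where the conditioning event ceases to be a genuine large deviation and one must invoke the $\alpha_t=0$ instance of Lemma~\ref{lem:qgibbs}, for which the convergence occurs at the linear rate of \eqref{eq:TV2} rather than an exponential one.
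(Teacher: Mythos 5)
Your proposal is correct in substance and uses the same toolkit as the paper (the pushforward identity of Lemma \ref{lem:pushforwards}, the quantitative Gibbs principle Lemma \ref{lem:qgibbs}, and the truncation-stability estimates of Lemma \ref{lem:tech} together with Lemmas \ref{lem:TVsum} and \ref{lem:TV projection 2}), but it organizes the infinite-domain step differently. The paper first proves the result when $|D_\phi|<\infty$ (Lemmas \ref{lem:finite push} and \ref{lem:finite pull}), and then, for general $\phi$, truncates the potential at a \emph{fixed} level $L$, writes the total variation as the truncated quantity plus an error $\Delta_L$ bounded via Lemma \ref{lem:TV projection 2} (comparing $B^N_{\phi_L,t}\subseteq B^N_{\phi,t}$) and Lemmas \ref{lem:TVsum}, \ref{lem:pushforwards}, \ref{lem:tech} (comparing the tilted densities), and lets $L\to\infty$ with constants kept uniform in $L$ by the third part of Lemma \ref{lem:tech}. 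You instead exploit the exact identity $B^N_{\phi_{tN},t}=B^N_{\phi,t}$ (the constraint $\sum_i\phi(s_i)\le tN$ forces each $\phi(s_i)\le tN$), so that $\Phi^{\#}\mu^{N\to k}_{\phi,t}$ is \emph{exactly} the conditional law of i.i.d.\ draws from the truncation $\bar\psi^{(tN)}$; this removes the ball-comparison error entirely, at the price of applying Lemma \ref{lem:qgibbs} to an $N$-dependent family of base densities. That is legitimate only because the error terms in Lemma \ref{lem:qgibbs} are non-asymptotic with a universal constant times a moment ratio of the \emph{tilted} density, and you correctly observe that $g_N=(\psi_{\alpha_t})_{\langle tN\rangle}$, so Lemma \ref{lem:tech} gives uniform control of those moments and $k\|g_N-\psi_{\alpha_t}\|=O(k e^{-ctN})$ via Lemma \ref{lem:TVsum}; the paper's fixed-$L$ route needs the analogous uniformity in $L$ and gets it from the same lemma. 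The one place where you assert rather than argue is the replacement of $\xi_{k,\alpha_{t,N}}$ by $\xi_{k,\alpha_t}$ in the fixed-$k$ regime: $\xi_{k,\alpha}$ is an expectation of an unbounded functional of the normalized sum, so total-variation closeness of $g_N^{\otimes k}$ and $\psi_{\alpha_t}^{\otimes k}$ alone does not suffice; you should combine the TV bound with the uniform fourth-moment bounds from Lemma \ref{lem:tech} (e.g.\ truncate the integrand and use Cauchy--Schwarz) to make this comparison quantitative. This is a fixable detail (and the paper's Step 3 glosses over the corresponding $\xi_{k,\alpha_{t,L}}$ versus $\xi_{k,\alpha_t}$ point in \eqref{eq:huntsman} in much the same way), so I regard your route as a valid, and in some respects tidier, alternative to the paper's double-limit argument.
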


Theorem \ref{thm:alpha full} is proved in the remainder of this section. The proof is divided into three steps:

\begin{itemize}
\item Step 1. We assume that the Lebesgue measure of $D_\phi$ is finite, and under this assumption Lemma \ref{lem:finite push} below estimates the total variation between the pushforwards $\Phi^{ \#} \mu_{\phi,t}^{N \to k}$ and $\Phi^{ \#} \gamma_{\phi,\alpha_t}^{\otimes k}$.
\item Step 2. We will continue to assume $|D_\phi| < \infty$, and use our work in Section \ref{sec:sufficiency} to show that we may `pullback' the result obtained in Lemma \ref{lem:finite push} to estimate the total variation between the densities $\mu_{\phi,t}^{N \to k}$ and $\gamma_{\phi,t}^{ \otimes k}$ .
\item Step 3. Finally, we will show that the assumption $|D_\phi| < \infty$ may be lifted, completing the proof of Theorem \ref{thm:alpha full}. This part is based on a truncation argument, in which the domain $D_\phi$ of infinite Lebesgue measure is approximated by sets with finite Lebesgue measure.
\end{itemize}

The next three sections correspond to the three steps outlined above.

\subsection{Step 1}

The following lemma may be regarded as a pushforward version of Theorem \ref{thm:alpha full}

\begin{lemma} \label{lem:finite push}
Suppose $|D_\phi| < \infty$ and $t \leq t_{\mathrm{crit}}$. Then, with $\alpha_t \leq 0$ as in Definition \ref{lem:parameter}, we have
\begin{align*}
\int_{ \mathbb{R}^k } \Bigg| \Phi^{ \#} \mu_{\phi,t}^{N \to k} (y) - \psi_{\alpha_t}^{\otimes k} (y)  \Bigg|  \dint y =
\begin{cases}
\left( 1 + \varepsilon^{(1)}_{k,N} \right) \xi_{k,\alpha_t}\frac{k}{N} \qquad &: \text{$k$ fixed, $N \to \infty$},\\
\left( 1 + \varepsilon^{(2)}_{k,N}  \right) \xi \frac{k}{N} \qquad &:\text{$k = o(N), k,N \to \infty$},\\
\left( 1 + \varepsilon^{(3)}_{k,N}  \right) Q(\theta) \qquad &:\text{$k \sim \theta N, k,N \to \infty$},
\end{cases}
\end{align*}
where there is a universal constant $C$ with the following property. Namely, if $X$ is a random variable distributed according to $\psi_{\alpha_t}$, then setting $C' := C \frac{ \mathbb{E}[ (X- \mathbb{E}[X])^4 ]}{ \mathrm{Var}[X]^2}$ we have
\begin{align*}
|\varepsilon^{(1)}_{k,N}| \leq C' \sqrt{ \frac{k}{N} }, \qquad |\varepsilon^{(2)}_{k,N}|, |\varepsilon^{(3)}_{k,N}|  \leq C' \left( \frac{1}{\sqrt{k}} + \frac{1}{\sqrt{N-k}} \right).
\end{align*}
\end{lemma}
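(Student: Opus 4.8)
The key observation is that, since $|D_\phi| < \infty$, we may sample $(X_1,\dots,X_N)$ uniformly on $B_{\phi,t}^N$ by first sampling $(U_1,\dots,U_N)$ uniformly and independently on $D_\phi$ and then conditioning on the event $\{\sum_{i=1}^N \phi(U_i) \le tN\}$. Writing $Y_i := \phi(U_i)$, the random variables $Y_1,\dots,Y_N$ are i.i.d.\ with density $\psi_0(y) = \psi(y)/|D_\phi|$ (by \eqref{eq:zero push}), and they have mean $\int_0^\infty y\,\psi_0(y)\dint y = t_{\mathrm{crit}}$ by the definition of $t_{\mathrm{crit}}$ combined with the change of variables \eqref{eq:change}. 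Thus the pushforward density $\Phi^{\#}\mu_{\phi,t}^{N\to k}$ is exactly the conditional density of $(Y_1,\dots,Y_k)$ given $\{Y_1 + \dots + Y_N \le tN\}$ --- equivalently, after reflecting (replacing $Y_i$ by $-Y_i$ and $t$ by $-t$), given $\{(-Y_1) + \dots + (-Y_N) \ge -tN\}$ --- so that it falls under the hypotheses of the quantitative Gibbs conditioning principle, Lemma \ref{lem:qgibbs}, provided $t \le t_{\mathrm{crit}}$ and $t$ is strictly below the mean; the boundary case $t = t_{\mathrm{crit}}$ is handled by noting $\alpha_t = 0$ and $\Phi^{\#}\mu_{\phi,t}^{N\to k} = \psi_0^{\otimes k}$ in that case, so the total variation is governed by the $k/N$ rate with $\alpha_t=0$ as discussed in the introduction.

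First I would verify that the density $\psi_0$ satisfies the moment conditions needed for Lemma \ref{lem:qgibbs}: existence of the moment generating function $Z(\alpha)$ in a neighbourhood of the relevant tilting parameter follows from Definition \ref{def:potential} (which guarantees $Z(\alpha)<\infty$ on $(-\infty,\alpha_{\mathrm{max}})$ with $\alpha_{\mathrm{max}}>0$), and the fourth-moment quantities appearing in the error constant $C'$ are finite because $\psi_{\alpha_t}$ has exponential tails. Then I would apply Lemma \ref{lem:qgibbs} directly to the i.i.d.\ sequence $(Y_i)$ (with the tilting parameter $\alpha_t$ such that $\frac{\partial}{\partial\alpha}\log Z(\alpha)|_{\alpha=\alpha_t} = t$, which is exactly the $\alpha_t$ of Lemma \ref{lem:parameter} via the identity $W(\alpha) = \frac{\partial}{\partial\alpha}\log Z(\alpha)$ noted before Lemma \ref{lem:parameter}), reading off the three regimes and the claimed error bounds. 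The constant $C'$ in Lemma \ref{lem:qgibbs} is stated in terms of a random variable with density $\psi_{\alpha_t}$, which matches the statement of Lemma \ref{lem:finite push} verbatim.

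The bulk of the work is really just the identification of objects --- confirming that $\Phi^{\#}\mu_{\phi,t}^{N\to k}$ is the conditional law appearing in the Gibbs principle --- together with a careful treatment of the reflection that converts the lower-tail event $\{\sum Y_i \le tN\}$ into the upper-tail event to which Lemma \ref{lem:qgibbs} is stated; since $\psi_{\alpha_t}$, $\xi_{k,\alpha_t}$, $\xi$, and $Q(\theta)$ are all invariant under this reflection (the latter two being universal, and $\xi_{k,\alpha_t}$ depending only on the variance structure of the tilted law), the statement transfers cleanly. The main obstacle I anticipate is the edge case $t = t_{\mathrm{crit}}$: here $\alpha_t = 0$ and the "large deviation event" $\{\sum Y_i \le tN\}$ has probability tending to $1/2$ rather than being exponentially rare, so one must check that Diaconis--Freedman's Lemma \ref{lem:qgibbs} genuinely covers (or can be adapted to cover) the case where the conditioning level equals the mean; assuming it does --- as the introduction asserts --- the three-regime conclusion with $\alpha_t=0$ follows and the proof is complete. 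A secondary technical point is ensuring the error constant depends only on $\phi$ and $t$ and not on $k,N$, which is immediate once one observes that the moments of $\psi_{\alpha_t}$ depend only on $\phi$ and $t$.
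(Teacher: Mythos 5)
Your proposal is correct and follows essentially the same route as the paper: realize the uniform law on $B_{\phi,t}^N$ by conditioning i.i.d.\ uniform samples on $D_\phi$, push forward by $\phi$ so that $\Phi^{\#}\mu_{\phi,t}^{N\to k}$ becomes the conditional law of $(Y_1,\dots,Y_k)$ given $\{Y_1+\dots+Y_N\le tN\}$ with $Y_i=\phi(X_i)$ distributed as $\psi_0$ of mean $t_{\mathrm{crit}}$, and then apply the quantitative Gibbs conditioning principle (Lemma \ref{lem:qgibbs}); this identification is exactly the paper's proof, which treats the lower-tail orientation and the case $t=t_{\mathrm{crit}}$ as covered by Lemma \ref{lem:qgibbs} without further comment. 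The only slip is your parenthetical claim that at $t=t_{\mathrm{crit}}$ one has $\Phi^{\#}\mu_{\phi,t}^{N\to k}=\psi_0^{\otimes k}$ --- conditioning on an event of probability near $1/2$ still perturbs the marginals at rate $k/N$ --- but since you then invoke Lemma \ref{lem:qgibbs} with $\alpha_t=0$ anyway, this does not affect the argument.
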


\begin{proof}
Since $|D_\phi|<\infty$, the uniform density $\gamma_{\phi,\mathrm{uni}}(s) := \frac{\ind_{D_\phi}(s) }{ |D_\phi |}$ on $D_\phi$ exists. Suppose now $X_1,\ldots,X_N$ are independent random variables distributed according to the uniform density $\gamma_{\phi,\mathrm{uni}}$ on $D_\phi$, and consider the transformed variables $\phi(Y_1),\ldots,\phi(Y_N)$, which are distributed according to $\psi_0$, where $\psi_0$ is given in \eqref{eq:zero push}. Noting that by definition $\mu_{\phi,t}^{N \to k}$ is the marginal density of $(X_1,\ldots,X_k)$ conditioned on the event $\{ \phi(X_1) + \ldots + \phi(X_N) \leq t N\}$, it follows that the pushforward $\Phi^{ \#} \mu_{\phi,t}^{N \to k}$ is the marginal density of $(Y_1,\ldots,Y_k)$ conditioned on the event $\{ Y_1 + \ldots + Y_N \leq t N \}$. In other words, we are in the setting of Lemma \ref{lem:qgibbs}, from which the result follows immediately.
\end{proof}

\subsection{Step 2}
The next lemma `pulls back' the previous result, replacing the estimate of the total variation between $\Phi^{ \#} \mu_{\phi,t}^{N \to k}$ and $\psi_{\alpha_t}^{ \otimes k}$ with one between $\mu_{\phi,t}^{N \to k}$ and $\gamma_{\alpha_t}^{\otimes k}$. This next result amounts to preicsely the statement of Theorem \ref{thm:alpha full} in the case where the Lebesgue measure of $D_\phi$ is finite. 
\begin{lemma} \label{lem:finite pull}
Suppose $|D_\phi| < \infty$ and $t \leq t_{\mathrm{crit}}$. Then, with $\alpha_t \leq 0$ as in Definition \ref{lem:parameter}, we have
\begin{align*}
\int_{ \mathbb{R}^k } \Bigg|  \mu_{\phi,t}^{N \to k} (s) - \gamma_{\alpha_t}^{\otimes k} (s)  \Bigg|  \dint s =
\begin{cases}
\left( 1 + \varepsilon^{(1)}_{k,N} \right) \xi_{k,\alpha_t}\frac{k}{N} \qquad &: \text{$k$ fixed, $N \to \infty$},\\
\left( 1 + \varepsilon^{(2)}_{k,N}  \right) \xi \frac{k}{N} \qquad &:\text{$k = o(N), k,N \to \infty$},\\
\left( 1 + \varepsilon^{(3)}_{k,N}  \right) Q(\theta) \qquad &:\text{$k \sim \theta N, k,N \to \infty$},
\end{cases}
\end{align*}
where there is a universal constant $C$ with the following prooperty. Namely, if $X$ is a random variable distributed according to $\psi_{\alpha_t}$, then setting $C' := C \frac{ \mathbb{E}[ (X- \mathbb{E}[X])^4 ]}{ \mathrm{Var}[X]^2}$ we have
\begin{align*}
|\varepsilon^{(1)}_{k,N}| \leq C' \sqrt{ \frac{k}{N} }, \qquad |\varepsilon^{(2)}_{k,N}|, |\varepsilon^{(3)}_{k,N}|  \leq C' \left( \frac{1}{\sqrt{k}} + \frac{1}{\sqrt{N-k}} \right).
\end{align*}
\end{lemma}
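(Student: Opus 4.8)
The plan is to deduce Lemma \ref{lem:finite pull} from Lemma \ref{lem:finite push} by invoking the pushforward-invariance of total variation, Lemma \ref{lem:pushforwards}, with the multivariate potential $\Phi\colon D_\phi^k \to [0,\infty)^k$, $\Phi(s_1,\ldots,s_k) = (\phi(s_1),\ldots,\phi(s_k))$, playing the role of the measurable map. The point is that both densities on the left-hand side of the asserted identity factor through $\Phi$, so their total variation distance is unchanged upon pushing forward by $\Phi$, and the pushed-forward quantity is exactly what Lemma \ref{lem:finite push} controls.

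First I would check the factorization hypotheses. For $\gamma_{\phi,\alpha_t}^{\otimes k}$ this is immediate: $\gamma_{\phi,\alpha_t}^{\otimes k}(s) = g(\Phi(s))$ with $g(y_1,\ldots,y_k) := Z(\alpha_t)^{-k}\prod_{i=1}^k e^{\alpha_t y_i}$. For $\mu_{\phi,t}^{N\to k}$, the integral representation \eqref{eq:intrep2} shows that the inner integral over $\zeta\in\mathbb{R}^{N-k}$ of $\ind\{(s,\zeta)\in B_{\phi,t}^N\}$ depends on $s$ only through $\phi(s_1)+\cdots+\phi(s_k)$, so that $\mu_{\phi,t}^{N\to k}(s) = f(\Phi(s))$, where
\[
f(y_1,\ldots,y_k) := \frac{1}{|B_{\phi,t}^N|}\int_{\mathbb{R}^{N-k}}\ind\Big\{\phi(\zeta_1)+\cdots+\phi(\zeta_{N-k}) \leq tN - (y_1+\cdots+y_k)\Big\}\dint\zeta;
\]
in particular $\mu_{\phi,t}^{N\to k}$ is supported on $D_\phi^k$. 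Both $\mu_{\phi,t}^{N\to k}(s)\dint s$ and $\gamma_{\phi,\alpha_t}^{\otimes k}(s)\dint s$ are probability measures absolutely continuous with respect to the $k$-dimensional Lebesgue measure $\mu$ on $E := D_\phi^k$, so the hypotheses of Lemma \ref{lem:pushforwards} hold with $E = D_\phi^k$ and $F = [0,\infty)^k$. This is structurally the same maneuver as around \eqref{eq:push rep} in the proof of Lemma \ref{lem:mango}.

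Lemma \ref{lem:pushforwards} then gives
\[
\int_{\mathbb{R}^k}\Big|\mu_{\phi,t}^{N\to k}(s) - \gamma_{\phi,\alpha_t}^{\otimes k}(s)\Big|\dint s = \int_{[0,\infty)^k}\Big|\Phi^{\#}\mu_{\phi,t}^{N\to k}(y) - \Phi^{\#}\gamma_{\phi,\alpha_t}^{\otimes k}(y)\Big|\dint y .
\]
By the identity recorded at the end of Section \ref{sec:potential}, namely $\Phi^{\#}(\nu^{\otimes k}) = (\phi^{\#}\nu)^{\otimes k}$, combined with $\phi^{\#}\gamma_{\phi,\alpha_t} = \psi_{\alpha_t}$, we have $\Phi^{\#}\gamma_{\phi,\alpha_t}^{\otimes k} = \psi_{\alpha_t}^{\otimes k}$, so the right-hand side above equals $\int_{\mathbb{R}^k}|\Phi^{\#}\mu_{\phi,t}^{N\to k}(y) - \psi_{\alpha_t}^{\otimes k}(y)|\dint y$. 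This is precisely the quantity estimated in Lemma \ref{lem:finite push}, and substituting that estimate yields the three-case asymptotic and the claimed bounds on $\varepsilon^{(1)}_{k,N},\varepsilon^{(2)}_{k,N},\varepsilon^{(3)}_{k,N}$.

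There is no substantial obstacle here: the analytic content (the quantitative Gibbs conditioning principle, Lemma \ref{lem:qgibbs}) has already been absorbed in Lemma \ref{lem:finite push}. The only points requiring care are the verification that $\mu_{\phi,t}^{N\to k}$ genuinely factors through $\Phi$ — i.e.\ that the $\zeta$-integral in \eqref{eq:intrep2} sees $s$ only through $(\phi(s_1),\ldots,\phi(s_k))$ — and the bookkeeping identifying $\Phi^{\#}\gamma_{\phi,\alpha_t}^{\otimes k}$ with $\psi_{\alpha_t}^{\otimes k}$; both are routine given the machinery of Section \ref{sec:sufficiency}.
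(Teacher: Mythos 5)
Your proposal is correct and follows essentially the same route as the paper: writing both $\mu_{\phi,t}^{N\to k}$ and $\gamma_{\phi,\alpha_t}^{\otimes k}$ as functions of $\Phi(s)=(\phi(s_1),\ldots,\phi(s_k))$, invoking Lemma \ref{lem:pushforwards} to transfer the total variation to the pushforwards, identifying $\Phi^{\#}\gamma_{\phi,\alpha_t}^{\otimes k}=\psi_{\alpha_t}^{\otimes k}$, and then applying Lemma \ref{lem:finite push}. No gaps; your extra verification that the $\zeta$-integral depends on $s$ only through $\phi(s_1)+\cdots+\phi(s_k)$ matches the paper's explicit formula for $f$.
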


\begin{proof}
We would like to use Lemma \ref{lem:pushforwards}, with $\Phi:\mathbb{R}^k \to [0,\infty]^k$ defined as in Section \ref{sec:potential}: i.e. $\Phi(s_1,\ldots,s_k) = \left( \phi(s_1),\dots,\phi(s_k) \right)$. To this end note that we may write  $\mu_{\phi,t}^{N \to k }( s) = f \left( \Phi(s) \right)$, where
\begin{align*}
f( y_1,\ldots, y_k ) := \frac{1}{ |B_{\phi,t}^N | } \int_{ \mathbb{R}^{N -k } } \ind_{\left\{ \phi(\zeta_1) + \ldots + \phi(\zeta_{N-k} ) \leq t N - \sum_{ i = 1}^k y_i \right\}} \dint \zeta_1 \ldots \dint \zeta_{N-k} .
\end{align*}
Moreover, clearly $\gamma_{\phi,\alpha_t}^{ \otimes k}(s) = g\left( \Phi(s) \right)$, where $g(y_1,\ldots,y_k) = \frac{ e^{ \alpha_t( y_1 + \ldots + y_k) } }{ Z(\alpha)^k }$. In particular, we are in the setting of Lemma \ref{lem:pushforwards}, so that 
\begin{align*}
\int_{ \mathbb{R}^k } \Bigg|  \mu_{\phi,t}^{N \to k} (s) - \gamma_{\phi,\alpha_t}^{\otimes k} (s)  \Bigg|  \dint s = \int_{ \mathbb{R}^k } \Bigg|  \Phi^\# \mu_{\phi,t}^{N \to k} (y) - \Phi^\# \gamma_{\phi,\alpha_t}^{\otimes k} (y)  \Bigg|  \dint y.
\end{align*}
The result follows by noting that $\Phi^\# \gamma_{\alpha_t}^{\otimes k} = \psi_{\alpha_t}^{\otimes k}$, and using Lemma \ref{lem:finite push}.
\end{proof}

\subsection{Step 3}

In the previous section, we proved Lemma \ref{lem:finite pull}, which is precisely the statement that Theorem \ref{thm:alpha full} holds whenever $D_\phi$ has finite Lebesgue measure. Here we will show that this assumption may be lifted, thereby completing the proof of Theorem \ref{thm:alpha full}.

\begin{proof}[Proof of Theorem \ref{thm:alpha full}]
We approximate an arbitrary potential $\phi$ by a potential $\phi_L$ whose domain has finite Lebesgue measure. Namely, for $L > 0$ let $\phi_L(s) := \phi(s) + \infty \ind \{ \phi(s) > L \}$, so that in particular $\phi$ agrees with $\phi_L$ on the set $\{ s\in\R \,:\, \phi(s) \leq L \}$. 

Define the truncated partition function
\begin{align*}
Z_L (\alpha) := \int_{ D_{ \phi_L}  } e^{\alpha \phi_L(s) } \dint s = \int_{ \phi^{ - 1} \left( [0,L] \right) } e^{ \alpha \phi(s) } \dint s = \int_0^L e^{ \alpha y} \psi(y) \dint y,
\end{align*}
where the final equality above is a truncated analogue of \eqref{eq:new PF}, and follows from an application of \eqref{eq:change}. For $t \in \mathbb{R}$ let $\alpha_{t,L}$ denote the solution to $\frac{ \partial}{ \partial \alpha } Z_L(\alpha)|_{\alpha = \alpha_{t,L}} = t$.

Now by the triangle inequality, for any $L > 0$ we have the inequalities
\begin{align*}
||  \mu_{\phi,t}^{N \to k}- \gamma_{\phi,\alpha_t}^{\otimes k} || \leq  ||  \mu_{\phi_L,t}^{N \to k}- \gamma_{\phi_L,\alpha_{t,L}}^{\otimes k} || + || \mu_{\phi_L,t}^{N \to k} -  \mu_{\phi,t}^{N \to k}  || +   || \gamma_{\phi_L,\alpha_{t,L}}^{\otimes k}  -  \gamma_{\phi,\alpha_t }^{\otimes k} || 
\end{align*} 
and 
\begin{align*}
||  \mu_{\phi,t}^{N \to k}- \gamma_{\phi,\alpha_t}^{\otimes k} || \leq  ||  \mu_{\phi_L,t}^{N \to k}- \gamma_{\phi_L,\alpha_{t,L}}^{\otimes k} || - || \mu_{\phi_L,t}^{N \to k} -  \mu_{\phi,t}^{N \to k}  || - || \gamma_{\phi_L,\alpha_{t,L}}^{\otimes k}  -  \gamma_{\phi,\alpha_t }^{\otimes k} || .
\end{align*} 
In particular, we may write 
\begin{align} \label{eq:tarantula}
\int_{ \mathbb{R}^k } \Bigg|  \mu_{\phi,t}^{N \to k} (s) - \gamma_{\phi,\alpha_t}^{\otimes k} (s)  \Bigg|  \dint s = \int_{ \mathbb{R}^k } \Bigg|  \mu_{\phi_L,t}^{N \to k} (s) - \gamma_{\phi_L,\alpha_{t,L}}^{\otimes k} (s)  \Bigg|  \dint s  + \Delta_L ,
\end{align} 
where
\begin{align} \label{eq:pastel}
|\Delta_L| \leq \int_{ \mathbb{R}^k } |  \gamma_{\phi_L,\alpha_{t,L}}^{\otimes k} (s)  -  \gamma_{\phi,\alpha_t }^{\otimes k} (s)  | \dint s + \int_{ \mathbb{R}^k } \Big|  \mu_{\phi_L,t}^{N \to k} (s) -  \mu_{\phi,t}^{N \to k} (s) \Big| \dint s.
\end{align}
We now show that for all $\rho > 0$, there exists an $L_0$ such that whenever $L \geq L_0$, $|\Delta_L| < \rho$. On the one hand, we note that the truncated Orlicz ball $B_{\phi_L,t}^N$ is a subset of $B_{\phi,t}^N$, both of which have finite Lebesgue measure. Moreover, as $L \to \infty$, $B_{\phi,t}^N \setminus B_{\phi_L,t}^N \to \varnothing$, so that by the continuity of Lebesgue measure we have $| B_{\phi,t}^N \setminus B_{\phi_L,t}^N | \to 0$. In particular, by Lemma \ref{lem:TV projection 2} there is an $L_1$ such that whenever $L \geq L_1$, we have
\begin{align} \label{eq:vespers1}
\int_{ \mathbb{R}^k } \Big|  \mu_{\phi_L,t}^{N \to k} (s) -  \mu_{\phi,t}^{N \to k} (s) \Big| \dint s \leq \rho /2.
\end{align} 
On the other hand, using Lemma \ref{lem:TVsum} we have
\begin{align} \label{eq:vigil}
\int_{ \mathbb{R}^k } |  \gamma_{\phi_L,\alpha_{t,L}}^{\otimes k} (s)  -  \gamma_{\phi,\alpha_t }^{\otimes k} (s)  | \dint s \leq k  \int_{ \mathbb{R} } |  \gamma_{\phi_L,\alpha_{t,L}} (s)  -  \gamma_{\phi,\alpha_t } (s)  | \dint s 
\end{align}
Now consider that $\gamma_{\phi,\alpha_t}(s) = \frac{ e^{ \alpha_t \phi(s) }}{ Z(\alpha_t)}$ and $\gamma_{\phi_L, \alpha_{t,L}} (s) := \frac{ e^{ \alpha_{t,L} \phi(s)} }{ Z_L(\alpha_t)}$ are both densities that may be written as a function of $\phi$. In particular, we are in the setting of Lemma \ref{lem:pushforwards} with $\Phi = \phi:\mathbb{R} \to [0,\infty]$, so that
\begin{align} \label{eq:monk}
  \int_{ \mathbb{R} } |  \gamma_{\phi_L,\alpha_{t,L}} (s)  -  \gamma_{\phi, \alpha_t } (s)  | \dint s  =   \int_0^\infty | \phi^{\#}  \gamma_{\phi_L, \alpha_{t,L}} (y)  -  \phi^{\#} \gamma_{\phi,\alpha_t } (y)  | \dint y.
\end{align}
We now note that in the notation of Section \ref{sec:tech}, we have $\phi^{\#}  \gamma_{\phi_L, \alpha_{t,L}} = \left( \phi^{\#}  \gamma_{\phi, \alpha_t } \right)_{\langle L \rangle}$, so that making the relevant substitution in \eqref{eq:monk} we have
\begin{align*}
  \int_{ \mathbb{R} } |  \gamma_{\phi_L,\alpha_{t,L}} (s)  -  \gamma_{\phi, \alpha_t } (s)  | \dint s  =   \int_0^\infty | \ \left( \phi^{\#}  \gamma_{\phi, \alpha_t } \right)_{\langle L \rangle} (y)  -  \phi^{\#} \gamma_{\phi,\alpha_t } (y)  | \dint y.
\end{align*}
Now by the second point in Lemma \ref{lem:tech}, there is exists an $L_2$ such that for every  $L \geq L_2$, 
\begin{align} \label{eq:vespers2}
\int_0^\infty | \ \left( \phi^{\#}  \gamma_{\phi, \alpha_t } \right)_{\langle L \rangle} (y)  -  \phi^{\#} \gamma_{\phi,\alpha_t } (y)  | \dint y \leq \rho/2k. 
\end{align}
In particular, combining \eqref{eq:vespers2} and \eqref{eq:vigil} with \eqref{eq:vespers1} in \eqref{eq:pastel}, we see that for all $L \geq L_0 := \max \{ L_1, L_2 \}$, 
\begin{align} \label{eq:funnelweb}
|\Delta_L| \leq \rho. 
\end{align}
Finally, since the truncated potential $\phi_L$ has a domain of finite Lebesgue measure, by Lemma \ref{lem:finite pull} we have
\begin{align} \label{eq:huntsman}
 \int_{ \mathbb{R}^k } \Bigg|  \mu_{\phi_L,t}^{N \to k} (s) - \gamma_{\phi, \alpha_{t,L}}^{\otimes k} (s)  \Bigg|  \dint s 
\begin{cases}
\left( 1 + \varepsilon^{(1)}_{k,N,L} \right) \xi_{k,\alpha_t}\frac{k}{N} \qquad &: \text{$k$ fixed, $N \to \infty$},\\
\left( 1 + \varepsilon^{(2)}_{k,N,L}  \right) \xi \frac{k}{N} \qquad &:\text{$k = o(N), k,N \to \infty$},\\
\left( 1 + \varepsilon^{(3)}_{k,N,L}  \right) Q(\theta) \qquad &:\text{$k \sim \theta N, k,N \to \infty$},
\end{cases}
\end{align}
where there is a universal constant $C \in(0,\infty)$ such that if $X_L$ is distributed according to $ \left( \gamma_{\alpha_{t}} \right)_{  \langle L \rangle } $, then with $C_L' := C \frac{ \mathbb{E}[ (X_L - \mathbb{E}[X_L ] )^4 }{ \mathrm{Var}[X_L]^2 }$ we have 
\begin{align*}
|\varepsilon^{(1)}_{k,N,L}| \leq C'_L \sqrt{ \frac{k}{N} }, \qquad |\varepsilon^{(2)}_{k,N,L}|, |\varepsilon^{(3)}_{k,N,L}|  \leq C'_L \left( \frac{1}{\sqrt{k}} + \frac{1}{\sqrt{N-k}} \right).
\end{align*}
Now by applying the third point of Lemma \ref{lem:tech} to the truncated density $\left( \gamma_{\alpha_{t}} \right)_{  \langle L \rangle }$, we see that there exists a constant $M\in(0,\infty)$ depending on $\phi,t$ but independent of $L$ such that for all $L \geq L_0$ we have 
\begin{align*}
 \frac{ \mathbb{E}[ (X_L - \mathbb{E}[X_L ] )^4 }{ \mathrm{Var}[X_L]^2 }  \leq M,
\end{align*} 
so that for all $L \geq L_0$ by setting $C' = CM$ we have 
\begin{align} \label{eq:blackwidow}
|\varepsilon^{(1)}_{k,N,L}| \leq C' \sqrt{ \frac{k}{N} }, \qquad |\varepsilon^{(2)}_{k,N,L}|, |\varepsilon^{(3)}_{k,N,L}|  \leq C' \left( \frac{1}{\sqrt{k}} + \frac{1}{\sqrt{N-k}} \right).
\end{align}
In particular, since $\rho  > 0$ and $L$ are arbitrary, by combining \eqref{eq:tarantula}, \eqref{eq:funnelweb}, \eqref{eq:huntsman} and \eqref{eq:blackwidow} we obtain the result. 
\end{proof}

%%%%%%%%%%%%%%%%%%%%%%%%%%%
%%%%%%%%%%%%%%%%%%%%%%%%%%%
%%%%%%%%%%%%%%%%%%%%%%%%%%%
\section*{Appendix} \label{sec:qgibbs}
%%%%%%%%%%%%%%%%%%%%%%%%%%%
%%%%%%%%%%%%%%%%%%%%%%%%%%%
%%%%%%%%%%%%%%%%%%%%%%%%%%%

In this appendix we sketch a few details of the proof of Lemma \ref{lem:qgibbs}, which is very similar to the proof of Theorem 1.6 in \cite{DF2}.

\begin{proof}[Proof of Lemma \ref{lem:qgibbs}]
Since the proof runs almost identically to Section 3 of Diaconis and Freedman \cite{DF2}, we will only sketch the details in the case where $k \sim \theta N$. 

Now, whenever $\nu$ is a probability density on $\mathbb{R}^k$ of a random variable $(X_1,\ldots,X_k)$, let $\bar{\nu}$ denote the probability density on $\mathbb{R}^1$ of $X_1+ \ldots + X_k$. According to the sufficiency Lemma 2.4 of \cite{DF2}, we have the equality in total variation
\begin{align*}
|| \psi_{\geq t}^{N \to k}-  \psi_{\alpha_t}^{ \otimes k} || = || \bar{\psi}_{ \geq t}^{N \to k} - \bar{\psi}_{\alpha_t}^{ \otimes k} ||.
\end{align*}
Write $f_k(s) :=  \bar{\psi}_{\alpha_t}^{ \otimes k}(s)$. 
We remark that if $\psi^{\star k}(s)$ is the $k$-fold convolution of the density $\psi$, then
\begin{align} \label{eq:tiltchange}
f_k(s) = \frac{ e^{\alpha_t s} \psi^{\star k}(s) }{Z(\alpha_t)^k} .
\end{align}
Using \eqref{eq:tiltchange} to obtain the second equality below, and the definition of $\bar{\psi}_{\geq t}^{N \to k}$ to obtain the first, the density associated with $\bar{\psi}_{\geq t}^{N \to k}$ is given by 
\begin{align*}
\bar{\psi}_{\geq t}^{N \to k}(s) = \frac{ \psi^{ \star k}(s) \int_{t N - s}^\infty \psi^{ \star(N-k)} (u) \, \dint u }{ \int_{tN}^\infty \psi^{ \star N } (u) \, \dint u} =\frac{ f_k(s) \int_{t N - s}^\infty e^{ - \alpha( u - tN +s ) } f_{N-k}(u) \, \dint u }{ \int_{tN}^\infty e^{ - (u - tN) } f_N(u) \, \dint u}.
\end{align*}
In particular, we have 
\begin{align*}
 ||\bar{\psi}_{\geq t}^{N \to K}- \bar{\psi}_{\geq t}^{ \otimes k} || = \int_{ - \infty}^\infty f_k(s) \left| \frac{ \int_{t N - s}^\infty e^{ - \alpha( u - tN +s ) } f_{N-k} (u) \, \dint u }{ \int_{tN}^\infty e^{ - (u - tN) } f_N (u) \, \dint u} - 1  \right|\, \dint s
\end{align*} 
We now consider a recentering. Let $g_j$ denote the density of
\begin{align*}
\frac{ X_1 + \ldots + X_j -  j t }{ \sigma \sqrt{j}} 
\end{align*}
so that when $j$ is large, $g_j$ is close to the Gaussian density. Indeed, changing variable we have
\begin{align} \label{eq:TVexpansion}
 ||\bar{\psi}_{\geq t}^{N \to K}- \bar{\psi}_{\geq t}^{ \otimes k} || = \int_{ - \infty}^\infty g_k (s) \left| \frac{ \int_{ - \phi s }^\infty e^{ - \alpha \sigma \sqrt{N-k} \left( u + \phi s \right) } g_{N-k}(u) \, \dint u }{ \int_{0}^\infty e^{ - \alpha \sigma \sqrt{N} u  } g_N (u) \, \dint u} - 1  \right|\, \dint s,
\end{align} 
where $\phi = \sqrt{ \frac{ k}{ N-k} }$. We now use the Edgeworth expansion to estimate the ratio 
\begin{align} \label{eq:ratio}
\frac{ \int_{ - \phi s }^\infty e^{ - \alpha \sigma \sqrt{N-k} \left( u + \phi s \right) } g_{N-k}(u) \, \dint u }{ \int_{0}^\infty e^{ -  \alpha \sigma \sqrt{N} u  } g_N (u) \, \dint u}
\end{align}
as a function of the external variable $s$. Indeed, according to the Edgeworth expansion (see, e.g., \cite[Section XVI]{feller}) 
\begin{align} \label{eq:edgeworth}
g_j(s) = \frac{1}{ \sqrt{ 2\pi}}e^{ - s^2/ 2} \left( 1 +  \frac{\kappa(\alpha_t)}{ \sqrt{j} } (s^3 - 3s ) \right) + \varepsilon_j(s),
\end{align}
where for a random variable $X$ with density $h_{\alpha_t}$,  $\kappa(\alpha_t) := \mathbb{E} [ (X - \mathbb{E}[X])^3]/\mathrm{Var}[X]^{3/2}$ and there is a universal constant $C$ such that 
\begin{align*}
\Big| \varepsilon_j(s) \Big| \leq \frac{C \mathbb{E}[ (X - \mathbb{E}[X])^4]}{ \mathrm{Var}[X]^2  } \frac{1}{j}.
\end{align*}
We begin with estimating the ratio \eqref{eq:ratio} when $k = \theta N$. Here, it is easily seen that
\begin{align} \label{eq:ratio2}
\frac{ \int_{ - \phi s }^\infty e^{ - \alpha \sigma \sqrt{N-k} \left( u + \phi s \right) } g_{N-k}(u) \, \dint u }{ \int_{0}^\infty e^{ -  \alpha \sigma \sqrt{N} u  } g_N (u) \, \dint u} &= \frac{ \frac{1}{ \alpha_t \sigma_t \sqrt{ 2 \pi (N-k) } } \exp \left(  - \frac{1}{2} (\phi s)^2 \right) + O(\frac{1}{N-k})  }{  \frac{1}{ \alpha_t \sigma_t \sqrt{ 2 \pi N } } + O(1/N) } \nonumber \\ 
& = \frac{1}{ \sqrt{1-\theta}} \exp\left( - \frac{1}{2} \frac{ \theta}{1-\theta} s^2  \right) + \varepsilon_{k,N}(s),
\end{align}
where there is a universal constant $C\in(0,\infty)$ such that for all $s \in \mathbb{R}$ we have 
\begin{align*}
|\varepsilon_{k,N}(s)| \leq C  \frac{ \mathbb{E} [ | X - \mathbb{E}[X]|^4] }{ \mathrm{Var}[X]^{3/2} } \frac{1}{ \sqrt{N-k}} 
\end{align*}
 By plugging \eqref{eq:ratio2} into \eqref{eq:TVexpansion}, and using the central limit theorem, we prove the statement of Lemma \ref{lem:qgibbs} in the setting where $k \sim \theta N$. 

We omit the proof of the case $k = o(N)$, which is similar.
\end{proof}

\subsection*{Acknowledgment} 
SJ and JP have been supported by the Austrian Science Fund (FWF) Project P32405 ``Asymptotic Geometric Analysis and Applications'' of which JP is principal investigator. JP is also supported by the Special Research Program Project F5508-N26.

\bibliographystyle{plain}
\bibliography{qmp}

\begin{thebibliography}{10}

\bibitem{BP}
J.~R. Blum and K.~P. Pathak.
\newblock A note on the zero-one law.
\newblock {\em Ann. Math. Statist.}, 43:1008--1009, 1972.

\bibitem{borel}
E.~Borel.
\newblock {\em Introduction g{\'e}om{\'e}trique {\`a} quelques th{\'e}ories
  physiques}.
\newblock Cornell University Library historical math monographs.
  Gauthier-Villars, 1914.

\bibitem{D1954}
H.~E. Daniels.
\newblock Saddlepoint approximations in statistics.
\newblock {\em Ann. Math. Statist.}, 25:631--650, 1954.

\bibitem{DZ2010}
A.~Dembo and O.~Zeitouni.
\newblock {\em {Large Deviations Techniques and Applications}}.
\newblock Stochastic Modelling and Applied Probability. Springer-Verlag Berlin
  Heidelberg, second edition, 2010.

\bibitem{DF}
P.~Diaconis and D.~Freedman.
\newblock A dozen de {F}inetti-style results in search of a theory.
\newblock {\em Ann. Inst. H. Poincar\'{e} Probab. Statist.}, 23(2,
  suppl.):397--423, 1987.

\bibitem{DF2}
P.~Diaconis and D.~A. Freedman.
\newblock Conditional limit theorems for exponential families and finite
  versions of de {F}inetti's theorem.
\newblock {\em J. Theoret. Probab.}, 1(4):381--410, 1988.

\bibitem{feller}
W.~Feller.
\newblock {\em An introduction to probability theory and its applications.
  {V}ol. {II}}.
\newblock Second edition. John Wiley \& Sons, Inc., New York-London-Sydney,
  1971.

\bibitem{HPT2018}
J.~H\"{o}rrmann, J.~Prochno, and C.~Th\"{a}le.
\newblock On the isotropic constant of random polytopes with vertices on an
  {$\ell_p$}-sphere.
\newblock {\em J. Geom. Anal.}, 28(1):405--426, 2018.

\bibitem{KP2020}
Z.~Kabluchko and J.~Prochno.
\newblock The maximum entropy principle and volumetric properties of {O}rlicz
  balls.
\newblock {\em J. Math. Anal. Appl.}, 495(1):124687, 2021.

\bibitem{KPT2019_I}
Z.~Kabluchko, J.~Prochno, and C.~Th\"{a}le.
\newblock High-dimensional limit theorems for random vectors in
  {$\ell_p^n$}-balls.
\newblock {\em Commun. Contemp. Math.}, 21(1):1750092, 30, 2019.

\bibitem{KPV2020}
Z.~Kabluchko, J.~Prochno, and V.~Vysotsky.
\newblock Yet another note on the arithmetic-geometric mean inequality.
\newblock {\em Studia Math.}, 253(1):39--55, 2020.

\bibitem{KLR2019}
S.~S. {Kim}, Y.-T. {Liao}, and K.~{Ramanan}.
\newblock {An asymptotic thin shell condition and large deviations for random
  multidimensional projections}.
\newblock {\em arXiv e-prints}, page arXiv:1912.13447, December 2019.

\bibitem{KR2018}
S.~S. Kim and K.~Ramanan.
\newblock A conditional limit theorem for high-dimensional {$\ell^p$}-spheres.
\newblock {\em J. Appl. Probab.}, 55(4):1060--1077, 2018.

\bibitem{NR}
A.~Naor and D.~Romik.
\newblock Projecting the surface measure of the sphere of {$ \ell_p^n$}.
\newblock {\em Ann. Inst. H. Poincar\'{e} Probab. Statist.}, 39(2):241--261,
  2003.

\bibitem{P1965}
V.~V. Petrov.
\newblock On the probabilities of large deviations for sums of independent
  random variables.
\newblock {\em Teor. Verojatnost. i Primenen}, 10:310--322, 1965.

\bibitem{PTT2019}
J.~Prochno, C.~Th\"{a}le, and N.~Turchi.
\newblock The isotropic constant of random polytopes with vertices on convex
  surfaces.
\newblock {\em J. Complexity}, 54:101394, 17, 2019.

\bibitem{RR}
S.~T. Rachev and L.~R\"{u}schendorf.
\newblock Approximate independence of distributions on spheres and their
  stability properties.
\newblock {\em Ann. Probab.}, 19(3):1311--1337, 1991.

\bibitem{SchechtmanZinn}
G.~Schechtman and J.~Zinn.
\newblock On the volume of the intersection of two {$L^n_p$} balls.
\newblock {\em Proc. Amer. Math. Soc.}, 110(1):217--224, 1990.

\end{thebibliography}

\bigskip
\bigskip
	
	\medskip
	
	\small

	\noindent \textsc{Samuel G.~G. Johnston:} Institute of Mathematics and Scientific Computing,
		University of Graz, Heinrichstrasse 36, 8010 Graz, Austria
		
		\noindent
		{\it E-mail:} \texttt{samuel.johnston@uni-graz.at}

		\medskip
	
	\noindent \textsc{Joscha Prochno:} Institute of Mathematics and Scientific Computing,
	University of Graz, Heinrichstrasse 36, 8010 Graz, Austria
	
	\noindent
	{\it E-mail:} \texttt{joscha.prochno@uni-graz.at}

\end{document}